\documentclass[final]{siamltex}

\usepackage{epsfig,appendix,makecell}
\usepackage[english]{babel}
\usepackage{amssymb,amsmath,amsfonts,mathbbol}
\usepackage{mathrsfs,empheq}
\usepackage{mdwlist,fontawesome}
\usepackage{enumerate,url,enumitem,multirow,multicol}
\usepackage[usenames,dvipsnames,svgnames]{xcolor}

\usepackage{algorithm,extarrows}
\usepackage{arydshln}
\usepackage{algcompatible}
\algnewcommand\algorithmicreturn{\textbf{return}}
\algnewcommand\RETURN{\State \algorithmicreturn}%
\usepackage[outercaption]{sidecap}

\usepackage{tikz}
\usepackage{bm}

\usepackage{pgfplots}
\pgfplotsset{
  tick label style={font=\footnotesize},
  label style={font=\footnotesize},
  legend style={font=\footnotesize}
}

\usepackage{booktabs}
\usepackage{caption,subcaption}

\usepackage{todonotes,bbding,stackengine}

\usepackage[normalem]{ulem} 

\usepackage[colorlinks=true, allcolors=blue]{hyperref} 
\usepackage{cleveref}  

\newcommand{\xtilde}{\tilde{x}}

\newcommand{\Fix}{\mathrm{Fix}}

\newcommand{\R}{\mathbb{R}}
\newcommand{\N}{\mathbb{N}}
\newcommand{\cI}{\mathcal{I}}
\newcommand{\cR}{\mathcal{R}}
\newcommand{\cS}{\mathcal{S}}

\newcommand{\cN}{\mathcal{N}}

\newcommand{\cT}{\mathcal{T}}
\newcommand{\cA}{\mathcal{A}}

\newcommand{\cP}{\mathcal{P}}
\newcommand{\cJ}{\mathcal{J}}
\newcommand{\cL}{\mathcal{L}}
\newcommand{\cH}{\mathcal{H}}

\newcommand{\cQ}{\mathcal{Q}}

\newcommand{\cK}{\mathcal{K}}

\newcommand{\ztilde}{\tilde{z}}

\newcommand{\utilde}{\tilde{u}}
\newcommand{\stilde}{\tilde{s}}

\newcommand{\gra}{\mathrm{gra}}
\newcommand{\zer}{\mathrm{zer}}

\newcommand{\be}{\begin{equation}}
\newcommand{\ee}{\end{equation}}

\newcommand{\weak}{\rightharpoonup}

\newcommand{\mcup}{\mathbin{\raisebox{-0.20ex} {\scalebox{1.3}{$\cup$}}}}

\DeclareMathOperator{\prox}{prox}       
\newtheorem{assumption}{Assumption}
\newtheorem{remark}{Remark}
\newtheorem{fact}{Fact}
\usepackage[normalem]{ulem} 

\usepackage{enumitem}
\setlist[itemize]{leftmargin=1em} 

\newif\ifcompilePGFfigs
\compilePGFfigstrue

\newcommand{\tabincell}[2]{\begin{tabular}{@{}#1@{}}#2\end{tabular}}

\graphicspath{{figs/}}

\title{A frugal primal-dual splitting with minimal lifting over arbitrary rooted trees
}

\author{Feng Xue\thanks{National key laboratory, Beijing,  China (\tt{fxue@link.cuhk.edu.hk}).}
\and Hui Zhang\textsuperscript{\Envelope}\thanks{Corresponding author.  College of Science, National University of Defense Technology, Changsha, Hunan 410073, China (\tt{h.zhang1984@163.com}).} }

\date{\today}

\begin{document}

\maketitle

\begin{abstract}
We develop a frugal primal-dual splitting with minimal lifting for solving structured monotone inclusions, involving cocoercive operators, linear compositions and parallel sums. This is established by defining hierarchical nodes and edges between them over a tree-structured graph, with arbitrary assignments of dual variables and cocoercive elements to primal nodes. This arbitrariness allows a great flexibility in terms of  level-synchronous distributed computing, such as centralized or decentralized. 
The particular instances naturally extend the Douglas--Rachford splitting on various graphs, recover the parallel Chambolle--Pock, and solve a class of structured convex minimization problems. For the pure resolvent convex minimization subclass, we establish an $O(1/k)$ ergodic rate for a restricted primal-dual gap over bounded test sets.   
Furthermore, we introduce a reformulation technique in a product Hilbert space to facilitate the convergence analysis, specifically to derive the $o(1/k)$-rate of asymptotic regularity.  

\end{abstract}

\begin{keywords}
Primal-dual splitting, frugality, minimal lifting, tree-structured graph, distributed computing
\end{keywords}

\begin{AMS}
 47H05,  49M29, 49M27, 90C25
 \end{AMS}


\section{Introduction}
\subsection{Frugal splitting with minimal lifting}
In the realm of mathematical optimization and related fields, a wide variety of problems can be formulated as structured monotone inclusions. The best-known solvers include the proximal point method \cite{rtr_1976_2}, and splitting-type methods such as the Douglas–Rachford splitting (DRS) algorithm \cite{lions}, the forward-backward splitting algorithm \cite{plc}, and the Davis–Yin three-operator splitting algorithm \cite{ywt_2017}. Motivated by recent advances in operator splitting techniques \cite{ryu_lift}, there has been a growing focus on developing frugal resolvent splitting methods, which utilize only vector additions, scalar multiplications, and exactly one resolvent evaluation of each monotone operator per iteration \cite{yura_lift,fjaa_pds,bredies_lift,tam_2026}. Building on these developments, the most general form of monotone inclusions solvable by these methods is given as:
\be \label{p_simple}
\textrm{find } u^\star \in \cH, \  \textrm{such that} \ 
0 \in  \Bigg( \sum_{i=1}^{n} A_i + \sum_{j=1}^m \big( L_j^*\circ  
 B_j  \circ L_j \big)  +  \sum_{l=1}^{p} C_l  \Bigg)  
(u^\star) ,
\ee
particularly for the case of $n\ge 2$. Here,
 $A_i: \cH\mapsto 2 ^\cH$ and $B_j: \cK_j \mapsto 2^{\cK_j}$ are set-valued maximally monotone, defined on real Hilbert spaces $\cH$ and $\cK_j$, respectively.  $C_l$ for $l=1,\ldots,p$ are $\beta_l$-cocoercive, $L_j : \cH \mapsto \cK_j$ are bounded  and linear. 
 
To mitigate memory overhead in splitting-based algorithms, the notion of lifting was introduced in the work \cite{ryu_lift}, which established the DRS algorithm as the unique non-lifted unconditionally convergent resolvent splitting for the case $n=2$, $m=0$ and $p=0$ \cite[Corollary 1]{ryu_lift},
and  proposed an algorithm with minimal 2-fold lifting for $n=3$  \cite[Theorems 3 and 4]{ryu_lift}. Later on, the authors of \cite{yura_lift} extended this result to $n\ge 2$,  showing that the minimal lifting is $(n-1)$-fold \cite[Theorem 3.3]{yura_lift}.  Several distributed variants of DRS have been developed in \cite{condat_tour,bredies_ppa,campoy_product} for general $n\ge 2$. Subsequent works further extended these results to the case involving cocoercive operators \cite{morin_frugal,fjaa_ring,full}.
In particular, \cite{fjaa_pds} appears to be the first attempt to deal with the involvement of the linear compositions (i.e., $m >0$). Specifically, \cite[Theorem 5]{fjaa_pds} proved that the minimal lifting is $(n-1, m)$-fold for $n\ge 2$, $m\in\N$ and $p=0$.  This was later slightly improved by \cite{tyc_pds} to $p\in \N$. 
Most recently, the graph understanding has attracted increasing attention, particularly for the case of $n\ge 2$, $m=0$ and $p=0$. A number of algorithms have been developed or interpreted on bilevel graph \cite{bredies_lift}, ring network \cite{fjaa_ring} and other graph structures \cite{fjaa_graph,tam_2026}. 
Table \ref{table_summary} summarizes the key features of typical frugal splitting methods with minimal lifting, particularly for the case of $n\ge 2$.
\begin{table}[h!]
  \centering
   \caption{Summary of existing splitting methods.}\vspace{-1em}
   \resizebox{1.0\columnwidth}{!} {
 \begin{tabular}{|l||l|l|l|l|l|} 
    \hline
    scheme & \makecell[l]{ involving \\ cocoercive }
    & \makecell[l]{ involving \\ linear \\ compositions} &
    \makecell[l]{ involving \\ parallel sums} &
     \makecell[l]{ distributed \\ computing}    & \makecell[l]{ recovery of \\ Chambolle--Pock \\ \cite[Algorithm 1]{cp_2011} }  \\
    \hline\hline
    \makecell[l]{ DRS \cite{lions} \\ 
    Ryu    \cite[Theorem 4]{ryu_lift} \\
    Malitsky--Tam  \cite[Algorithm 1]{yura_lift} \\
    Extended Ryu  \cite[Sect. 3.1]{bredies_lift} \\
    \cite[Example 3.4]{tam_optl}  } 
    &   \faTimes   & \faTimes    & \faTimes  & sequential & \faTimes     \\
\hline
 \makecell[l]{ Davis--Yin  \cite[Algorithm 1]{ywt_2017} \\
  forward-DRS   \cite[Eq. (3.15)]{bredies_ppa}  \\ 
 distributed FB  \cite[Algorithm 1]{fjaa_ring} }     
   & \faCheck    & \faTimes    & \faTimes  & sequential & \faTimes \\
 \hline
  \makecell[l]{  \cite[Theorem 8.1]{morin_frugal} \\
   forward-DRS  \cite[Eq. (3.14)]{bredies_ppa}  }   & \faCheck    & \faTimes    & \faTimes  & parallel & \faTimes  \\
 \hline
  \makecell[l]{ Campoy    \cite[Theorem 5.1]{campoy_product}  }   & \faTimes    & \faTimes    & \faTimes  & parallel & \faTimes  \\ 
 \hline
  \tabincell{l}{   \cite[Algorithms 3 and 4]{bredies_lift}  
  \\ \cite[Algorithm 2]{bot_2024} } & \faTimes    & \faTimes    & \faTimes  & flexible & \faTimes  \\
 \hline
  \tabincell{l}{   \cite[Algorithm 1]{tam_2026}  }   & \faTimes    & \faCheck    & \faTimes  & sequential & \faTimes   \\
 \hline
   \makecell[l]{   \cite[Algorithm 1]{fjaa_pds}  }   & \faTimes    & \faCheck    & \faTimes  & sequential & \faTimes  \\
 \hline
   \makecell[l]{ \cite[Algorithm 1]{tyc_pds}  }   & \faCheck    & \faCheck    & \faTimes  & sequential & \faTimes  \\
\hline
  \tabincell{l}{ ours: Eq. \eqref{proto_T_relax}  }     & \faCheck    & \faCheck    & \faCheck  & flexible & \faCheck \\
\hline
    \end{tabular}
   }
    \vskip 0.5em
\label{table_summary}
\vspace*{-0.3cm}
\end{table}

In this work, we extend \eqref{p_simple} to the following problem involving parallel sums\footnote{The numbering of $i$ adopted here is from $0$ to $n-1$. This is for convenience of later developments, to distinguish the index of $0$ from other numbers. The index of $0$ will be referred to as a root of a tree, see Figure \ref{fig_whole}.}
\be \label{p}
\textrm{find } u^\star \in \cH, \  \textrm{such that} \ 
a \in  \Bigg( \sum_{i=0}^{n-1} A_i + \sum_{j=1}^m (L_j^*\circ  
(B_j \square D_j) \circ (L_j \cdot - b_j ) +  \sum_{l=1}^{p} C_l  \Bigg)  (u^\star) ,
\ee
where $B_j \square D_j$ denotes the parallel sum of $B_j$ and $D_j$, which is defined as $B_j \square D_j = (B_j^{-1} + D_j^{-1})^{-1}$. Here, $D_j: \cK_j \mapsto 2^{\cK_j}$ is $\nu_j$-strongly maximally monotone.   
The form \eqref{p} is highly general and encompasses  \eqref{p_simple} as its special instance  by setting $a=0$, $b_j=0$, and 
$D_j = \cN_{\{0\}} $---a normal cone to the singleton $\{0\}$, which yields $B_j \square D_j = B_j$ \cite[Example 1.3]{vu_2013}. 
Moreover, \eqref{p} also covers several early works addressing parallel-sum terms \cite{plc_2012,plc_2013,plc_2018,vu_2013,bot_jmiv_2014}. Throughout this paper, we always assume that {\it the solution set of \eqref{p} is non-empty}, unless otherwise specified.

\subsection{Contributions}
The primary purpose of this work is {\it not} to provide a full characterization of the frugal splitting operator (as comprehensively addressed in \cite{full,morin_frugal,tam_2026}). Instead, we 
propose a novel splitting method (formulated as \eqref{proto_T_relax}) tailored for solving \eqref{p}, which highlights the following key features (also see the last row of Table \ref{table_summary}):
\begin{itemize}
\item To the best of our knowledge, this seems the {\it first} splitting algorithm for solving \eqref{p} with any finite numbers of $n \ge 2$ and $m,p\in\N$, involving a mixture of cocoercive operators, linear compositions and parallel sums, via the equipped fixed-point encoding (cf. Propsition \ref{p_encoding}). 
\item This is {\it frugal},   {\it unconditionally convergent} and achieves the {\it minimal lifting of $(n-1,m)$-fold} (cf. Theorem \ref{t_encoding} and Theorem \ref{t_zs}).
\item This is directly built on a tree with {\it arbitrary} arrangements of primal nodes, assignments of dual variables and loadings of cocoercive operators (cf. Fig. \ref{fig_whole}). The arbitrariness of the tree topology allows a great  flexibility for {\it parameter selection} and {\it level-synchronous} distributed computing. 
\end{itemize}

\vskip.2cm
We extend existing works in the following practical and theoretical aspects:

(i) {\it Wide applicability}: The general problem \eqref{p} cannot be solved by existing splitting methods. Early algorithms \cite{plc_2018,vu_2013,bot_jmiv_2014} addressed the case of $n=1$ only, while most of the recent frugal algorithms such as \cite{yura_lift,morin_frugal,tam_2026} cannot deal with linear compositions. Though  \cite{fjaa_pds,tyc_pds} could solve the linear compositions, they do not align with the Chambolle--Pock style \cite[Algorithm 1]{cp_2011}, since they relied on $B_j$ rather than $B_j^{-1}$. It would be difficult to extend  \cite{fjaa_pds,tyc_pds} to the parallel-sum type. 
This work addresses these issues to solve the general \eqref{p}, by leveraging the resolvent of $B_j^{-1}$ {\it \`{a} la} Chambolle--Pock \cite[Algorithm 1]{cp_2011}.

(ii) {\it Great extension}: The proposed method (cf. \eqref{proto} or \eqref{proto_T_relax}) is essentially a {\it consensus-ADMM}, incorporating  primal-dual splitting  {\it \`{a} la} Chambolle--Pock \cite{cp_2011}, and forward-backward splitting  {\it \`{a} la} the classical proximal iteration \cite{plc} as basic ingredients. Therefore, it can be seen as an extension or mixture of those above standard solvers.

(iii) {\it Simple graph understanding}: Most existing works parameterize fixed-point frugal splitting operators using lower triangular structures with minimal lifting \cite{full,morin_frugal,tam_2026} or specific graph topologies \cite{bredies_lift,fjaa_graph}. Notably, approaches such as the bilevel graph \cite{bredies_lift} and additional subgraph \cite{fjaa_graph} are designed to distinguish updates of primal nodes (state graph) from those of interaction variables (base graph) and cocoercive operators (subgraph).
Our approach is fundamentally distinct from the aforementioned  methods, in that we directly construct a quite simple  {\it single layered} tree according to a certain protocol, as illustrated in Fig. \ref{fig_whole}, which seems the first graph to tackle the general problem \eqref{p}.
This unifies the treatments of primal nodes, interaction variables, dual and cocoercive elements. More importantly, inspired by consensus-ADMM, we give a straightforward interpretation for the metric weights of the tree edges via augmented Lagrangian (cf. Sect. \ref{sec_dynamical}).

(iv) {\it Great flexibility}: Most existing works adopt a fixed strategy to handle cocoercive operators and linear compositions \cite{fjaa_pds,morin_frugal,bredies_ppa}. In contrast, our proposed arbitrary tree structure, with flexible assignments of dual variables and loadings of cocoercive operators, allows for significant versatility in various distributed implementations

\subsection{Notations and assumptions}  \label{sec_notation}
We use standard notations and concepts from convex analysis and variational analysis, which, unless otherwise specified, can all be found in the classical and recent monographs \cite{rtr_book,rtr_book_2,plc_book}. The notational conventions regarding the tree graph are detailed in Remarks \ref{rmk_notation} and \ref{rmk_notation2}. 

A few more words about our  notations are in order. The set of integers $\{1,\ldots,n\}$ starting from 1  (excluding zero) is symbolized as $[n]$ for short.  
A resolvent of $A$ parametrized by a self-adjoint and positive definite metric $M$ is given as $J_A^M := J_{M^{-1}A} \circ   M^{-1}$, where $J_{M^{-1}A}$ follows a standard definition of resolvent \cite[Definition 23.1]{plc_book}.
For a linear, bounded, self-adjoint and positive semi-definite (i.e., possibly degenerate) $\cQ$, $\cQ$-semi inner product and seminorm are defined as 
$\langle \cdot |  \cdot \rangle_\cQ = \langle \cdot | \cQ \cdot \rangle $ and
$\|\cdot\|_\cQ = \sqrt{\langle \cdot |  \cdot \rangle_\cQ} $.
Throughout this paper, we normally use  inclusion form to express the algorithms instead of the commonly used resolvents in literature, since the inclusion form is (i) intrinsically closer to augmented Lagrangian formulation and the graph understanding; (ii) easier to be reformulated in a product Hilbert space for convergence analysis.

\section{Algorithmic prototype}
This section develops a frugal splitting prototype based on a tree graph. 

\subsection{Splitting strategy}
To begin with, we reformulate \eqref{p} as the following primal-dual inclusions \eqref{pd} and splitting form \eqref{pds}. They can also be treated as the optimality conditions of the problem \eqref{p}.
\begin{lemma}  \label{l_pd}
If $u^\star \in \cH$ is a solution to the primal inclusion \eqref{p}, then

{\rm (i)} $\exists (s_j^\star)_{j=1,\ldots,m} \in \prod_{j=1}^m \cK_j$, such that 
\be \label{pd}
\left\{   \begin{aligned}
0 & \in  \sum_{i=0}^{n-1} A_i u^\star + \sum_{j=1}^m L_j^* s_j^\star + \sum_{l=1}^p C_l u^\star - a; \\
0 & \in (B_j^{-1}  + D_j^{-1} ) s_j^\star - L_j u^\star + b_j, \quad \forall j\in [m].
\end{aligned}  \right.
\ee

{\rm (ii)}  $\exists (u_i^\star)_{i=1,\ldots,n-1} \in \cH^{n-1}$,
 $\exists (w_i^\star)_{i=1,\ldots,n-1} \in \cH^{n-1}$,
  $\exists (s_j^\star)_{j=1,\ldots,m} \in \prod_{j=1}^m \cK_j$,  such that  
\be \label{pds}
\left\{   \begin{aligned}
0 & \in A_0 u_0^\star - \sum_{i=1}^{n-1} w_i^\star 
+  \sum_{l \in \cP_0}  C_l u_0^\star 
+ \sum_{j \in \cJ_0}  L_j^* s_j^\star  - a; \\
0 & \in A_i u_i^\star + w_i^\star + \sum_{l \in \cP_i}  C_l u_0^\star 
+ \sum_{j \in \cJ_i}  L_j^* s_j^\star , 
\quad \forall i\in [n-1]; \\
0 & \in B_j^{-1} s_j^\star + D_j^{-1} s_j^\star - L_j u_0^\star + b_j, \quad \forall j\in [m]; \\
0 & = u_0^\star - u_i^\star, \quad \forall i\in [n-1], 
\end{aligned}  \right.
\ee
where $\{\cP_i\}_{i=0}^{n-1}$  and $\{\cJ_i\}_{i=0}^{n-1}$ form arbitrary partitions of $[p]$ and $[m]$, respectively.
\end{lemma}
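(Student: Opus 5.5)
The plan is to unpack the definition of the parallel sum for part (i), and then to copy the primal variable and distribute the resulting terms for part (ii).

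\emph{Part (i).} Suppose $u^\star$ solves \eqref{p}. Then there is an element
\[
s_j^\star \in (B_j \square D_j)(L_j u^\star - b_j)
\]
for each $j\in[m]$, together with selections of the $A_i$ terms, such that
\[
a \in \sum_i A_i u^\star + \sum_j L_j^* s_j^\star + \sum_l C_l u^\star .
\]
This is simply the definition of the sum of set-valued operators applied at $u^\star$. Since $B_j \square D_j = (B_j^{-1}+D_j^{-1})^{-1}$, the membership $s_j^\star \in (B_j\square D_j)(L_ju^\star - b_j)$ is equivalent to
\[
L_j u^\star - b_j \in (B_j^{-1}+D_j^{-1}) s_j^\star .
\]
Rearranging this gives the second line of \eqref{pd}, and moving $a$ to the right gives the first line. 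The $C_l$ are single-valued, being cocoercive, so no further selection is needed for them.

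\emph{Part (ii).} I would set $u_i^\star := u^\star$ for all $i=0,\ldots,n-1$, which makes the consensus equations $u_0^\star - u_i^\star = 0$ hold trivially. From (i), the first line of \eqref{pd} means there exist $a_i^\star \in A_i u^\star$, $i=0,\ldots,n-1$, with
\[
\sum_{i=0}^{n-1} a_i^\star + \sum_j L_j^* s_j^\star + \sum_l C_l u^\star - a = 0 .
\]
For $i\in[n-1]$, define
\[
w_i^\star := -a_i^\star - \sum_{l\in\cP_i} C_l u_0^\star - \sum_{j\in\cJ_i} L_j^* s_j^\star .
\]
Then the $i$-th line of \eqref{pds} holds by construction: $0 = a_i^\star + w_i^\star + \ldots$ with $a_i^\star\in A_iu_i^\star$.

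Summing these definitions over $i\in[n-1]$ and using that $\{\cP_i\}$ and $\{\cJ_i\}$ partition $[p]$ and $[m]$, we get
\[
-\sum_{i=1}^{n-1} w_i^\star = \sum_{i\ge1} a_i^\star + \sum_{l\notin\cP_0} C_l u^\star + \sum_{j\notin\cJ_0} L_j^* s_j^\star .
\]
Substituting into the zero-sum identity yields
\[
0 = a_0^\star - \sum_{i\ge1} w_i^\star + \sum_{l\in\cP_0} C_l u_0^\star + \sum_{j\in\cJ_0} L_j^* s_j^\star - a ,
\]
which is the first line of \eqref{pds}. The dual lines are exactly the second line of \eqref{pd} with $u^\star = u_0^\star$.

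The only delicate point is the bookkeeping in part (ii): the partition property must be used so that every $C_l$ and $L_j^*s_j^\star$ term is counted exactly once across the root and non-root equations. This works even when some $\cP_i$ or $\cJ_i$ are empty, with empty sums read as zero. No maximality or qualification condition is needed, since we only go from \eqref{p} to \eqref{pds}, which is the easy direction.
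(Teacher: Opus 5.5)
Your proposal is correct and follows essentially the same route as the paper: in (i) you unpack $B_j\square D_j=(B_j^{-1}+D_j^{-1})^{-1}$ to obtain $s_j^\star$, and in (ii) you replicate $u^\star$ and define the $w_i^\star$ by distributing the $C_l$ and $L_j^*s_j^\star$ terms according to the partitions, so the root line follows from the zero-sum identity. The only cosmetic difference is in bookkeeping: the paper also introduces $w_0^\star$ with $\sum_{i=0}^{n-1}w_i^\star=0$ and then eliminates it, whereas you use explicit selections $a_i^\star\in A_iu^\star$ and check the root equation directly.
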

\begin{proof}
(i) The original inclusion \eqref{p} implies that $\exists s_j^\star \in (B_j\square D_j) (L_j u^\star - b_j)$ (which is equivalent to the second line of \eqref{pd}), such that  the first line of \eqref{pd} holds. In addition, it is also easy to recover \eqref{p} from \eqref{pd}.

(ii) To proceed with \eqref{pd}, based on a consensus form of ADMM \cite[P4-1]{admm_consensus} or  variable splitting suggested in \cite{salsa,csalsa}, the original variable $u^\star$ associated with $A_0$ is renamed as $u_0^\star$ and replicated into $(n-1)$ distinct surrogate variables $u_i^\star$ associated with $A_i$ for $i\in [n-1]$ (also known as {\it local agents} in signal processing communities), which yields the last line of \eqref{pds}. 

Then, by arbitrarily redistributing  the sums of $L_js_j^\star$ and $C_l u_0^\star$ over any $(A_i,u_i^\star)$, we rewrite the first line of \eqref{pd} as
\[
0  \in  \Big( \underbrace{  A_0 u_0^\star + \sum_{j\in \cJ_0}  L_j^* s_j^\star + \sum_{l\in \cP_0}  C_l u_0^\star - a}_{\owns -w_0^\star} \Big)
+ \sum_{i=1}^{n-1} \Big( \underbrace{ A_i u_i^\star + \sum_{j\in \cJ_i}  L_j^* s_j^\star + \sum_{l \in \cP_i}  C_l u_0^\star}_{\owns -w_i^\star}  \Big), 
\]
where the auxiliary variables $w_i^\star$ for $i=0,\ldots,n-1$ are introduced such that $\sum_{i=0}^{n-1} w_i^\star = 0$.  Finally, substituting $w_0^\star = - \sum_{i=1}^{n-1} w_i^\star $ into $(A_0,u_0^\star)$ yields \eqref{pds}.  
\end{proof}
\begin{remark}
Lemma \ref{l_pd}-(i) is essentially the same as the dual inclusion of \cite[Eq.(1.3)]{bot_jmiv_2014}, \cite[Problem 3.1]{arias_jota}. Our splitting strategy of Lemma \ref{l_pd}-(ii) relies on $B_j^{-1}$, which is fundamentally different from the way that \cite[Eq.(2)]{fjaa_pds} deals with the linear compositions. Our approach paves the way for developing algorithms in the spirit of Chambolle--Pock \cite[Algorithm 1]{cp_2011}.
\end{remark}

Based on \eqref{pds}, we propose the following prototype in an inclusion form, which will be shown to enjoy a very simple and clear graph-based interpretation. The involved notations, such as the multi-index $i_1,\ldots,i_q$ and the subset $\cI_{q,i_1,\ldots,i_{q-1}}$, will also be clarified therein. 

\be \label{proto}
\left\lfloor \begin{aligned}
& \hspace*{-.2cm} \textrm{(root)} \\
0 & \in   A_{0 } u_{ 0 }^{k+1}   +  \sum_{i_{1} =1}^{ |\cI_{1} | }
M_{i_1 }  (u_{0 }^{k+1}  - u_{i_1 }^{k}  +  w_{ i_1 }^k )  
+  \sum_{j \in\cJ_{0 } } L_{j}^* s_{j}^{k}  -a ; \\
0 & \in     B_{j_{0} }^{-1} s_{j_{0} } ^{k+1} + N_{j_0} (s_{j_{0} }^{k+1} - s_{j_{0} }^{k}) + D_{j_{0} }^{-1} s_{j_{0} }^{k}
- L_{j_{0} } u_0^{k+1} +b_{j_0},
\quad  \forall j_0 \in   \cJ_0 ; \\
& \hspace*{-.2cm} \textrm{(the general $q$-th level with
 $q\in [n-1]$)}  \\
0 & =  w_{i_1,\ldots,i_{q}}^{k+1} -w_{i_1,\ldots,i_{q} }^{k} + u_{i_1,\ldots,i_{q} }^{k}  - u_{i_1,\ldots,i_{q-1} }^{k+1}, \quad 
\forall i_q \in \big[\big| \cI_{q,i_1,\ldots, i_{q-1} } \big| \big] ; \\
0 &  \in  A_{i_1,\ldots,i_{q} } u_{ i_1,\ldots,i_{q} }^{k+1}   + 
M_{i_1,\ldots,i_{q} } (u_{i_1,\ldots,i_{q} }^{k+1}  - u_{ i_1,\ldots,i_{q-1} }^{k+1}  -   w_{ i_1,\ldots,i_{q} }^{k+1} )    \\
 & +  \sum_{i_{q+1} =1}^{ |\cI_{q+1,i_1,\ldots, i_q} | }
M_{i_1,\ldots,i_{q+1} }  (u_{i_1,\ldots,i_{q} }^{k+1}  - 
u_{i_1,\ldots,i_{q}, i_{q+1} }^{k}  +  w_{ i_1,\ldots,i_{q}, i_{q+1} }^k ) 
+  \sum_{l\in \cP_{i_1,\ldots,i_q}} C_l u_{i_1,\ldots,i_{q-1}}^{k+1}   \\ 
& +  \sum_{j \in\cJ_{i_1,\ldots, i_{q} } } L_{j}^* s_{j}^{k}   + 
\sum_{j\in \cJ_{c,i_1,\ldots,i_{q}}} L_j^* (s_{j}^{k+1} - s_j^k ) ,
\quad \forall i_q\in \big[\big| \cI_{q,i_1,\ldots, i_{q-1} } 
\big| \big] ; \\
0 & \in     B_{j_{q} }^{-1} s_{j_{q} } ^{k+1} + N_{j_q} (s_{j_{q} }^{k+1} - s_{j_{q} }^{k}) + D_{j_{q} }^{-1} s_{j_{q} }^{k}
- L_{j_{q} } u_{i_1,\ldots,i_{q} }^{k+1} +b_{j_q},
\quad  \forall j_q \in   \cJ_{i_1,\ldots,i_q} . 
\end{aligned} \right. 
\ee

Here, we make the following assumption on the {\it metrics} $M_{i_1,\ldots,i_{q} } $ and $ N_{j_q}$ in \eqref{proto}, which is assumed to hold throughout this paper, unless otherwise stated.
\begin{assumption}  \label{assume_MN}
{\rm (i)} $M_{i_1,\ldots,i_{q} } $ and $ N_{j_q}$ are linear, bounded, self-adjoint and uniformly positive definite;

{\rm (ii)} $M_{i_1,\ldots,i_{q} } $ and $ N_{j_q}$ have closed ranges. 
\end{assumption}
\begin{fact}
The prototype \eqref{proto} is well-posed under Assumption  \ref{assume_MN}.
\end{fact}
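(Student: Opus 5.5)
We show that, for given $(u^k,w^k,s^k)$, each line of \eqref{proto} determines the next variable uniquely and explicitly, by processing the tree level by level (breadth-first) from the root down. At every step the unknown enters only through an inclusion of the form $0\in A x + M x - z$ or $0\in B^{-1}y + N y - z$, where $M$ or $N$ is one of the metrics of Assumption~\ref{assume_MN} (or a finite sum of them), and $z$ is already computed. Such an inclusion has the unique solution $x = J_{M^{-1}A}(M^{-1}z)=J_A^M z$, as in Section~\ref{sec_notation}. This holds because $M^{-1}A$ is maximally monotone in the Hilbert space with inner product $\langle\cdot|M\cdot\rangle$, which is equivalent to the original one since $M$ is uniformly positive definite. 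Minty's theorem then gives full domain and single-valuedness of its resolvent.

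\emph{Root.} The first line reads $0\in A_0u_0^{k+1} + \big(\sum_{i_1}M_{i_1}\big)u_0^{k+1} - z_0$, where $z_0$ depends only on iterate-$k$ quantities. The operator $\bar M_0:=\sum_{i_1}M_{i_1}$ is bounded, self-adjoint and uniformly positive definite, being a finite sum of such operators (here $n\ge 2$, so the root has at least one child). Hence $u_0^{k+1}=J_{A_0}^{\bar M_0}z_0$. With $u_0^{k+1}$ known, each $s_{j_0}^{k+1}$, $j_0\in\cJ_0$, solves $0\in B_{j_0}^{-1}s+N_{j_0}s - z$ with $z=N_{j_0}s_{j_0}^k - D_{j_0}^{-1}s_{j_0}^k + L_{j_0}u_0^{k+1}-b_{j_0}$. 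This requires $D_{j_0}^{-1}s^k_{j_0}$ to be a well-defined single point, which holds because $D_j$ is $\nu_j$-strongly maximally monotone, so $D_j^{-1}$ is single-valued, full-domain and $\nu_j$-cocoercive. Since $B_{j_0}^{-1}$ is maximally monotone, $s_{j_0}^{k+1}=J^{N_{j_0}}_{B_{j_0}^{-1}}z$ is unique.

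\emph{Induction on the level $q$.} Suppose all level-$(q-1)$ primal variables $u^{k+1}_{i_1,\ldots,i_{q-1}}$ are known. The $w$-line is an explicit assignment giving $w^{k+1}_{i_1,\ldots,i_q}$. The $u$-line then has the form $0\in A u + \big(M_{i_1,\ldots,i_q}+\sum_{i_{q+1}}M_{i_1,\ldots,i_{q+1}}\big)u - z$. The forcing term $z$ involves only the parent's new value, the children's old values, $C_l u^{k+1}_{\text{parent}}$ (single-valued, since the $C_l$ are cocoercive), and $s^k_j$.

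The main obstacle is the coupling term $\sum_{j\in\cJ_{c,i_1,\ldots,i_q}}L_j^*(s_j^{k+1}-s_j^k)$, which contains new dual values. I would rely on the tree protocol (Remarks \ref{rmk_notation}--\ref{rmk_notation2}): $\cJ_{c,i_1,\ldots,i_q}$ collects dual variables attached to the parent or earlier nodes, so their $s_j^{k+1}$ are already available at level $q$. If instead these duals belong to the node itself, the $u$- and $s$-inclusions form a coupled system. I would then write the system as a monotone-plus-skew inclusion in $(u,s)$, namely $\begin{pmatrix}A+\bar M & L^*\\ -L & B^{-1}+N\end{pmatrix}$. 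This operator is maximally monotone with strongly monotone diagonal, hence uniquely solvable, though not in closed form.

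Finally, the dual updates at level $q$ follow exactly as at the root, and induction over the finitely many levels completes the argument. The closed-range condition of Assumption~\ref{assume_MN}(ii) is not needed beyond what uniform positivity already gives, since uniform positivity makes each metric invertible.
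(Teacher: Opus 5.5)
Your proof is correct and follows the same route as the paper. The paper simply observes that each $u_{i_q}^{k+1}$ and $s_{j_q}^{k+1}$ is a metric resolvent, $J_{A_{i_q}}^{M_{i_q}+\sum_{i_{q+1}} M_{i_{q+1}}}$ and $J_{B_{j_q}^{-1}}^{N_{j_q}}$ respectively, evaluated at an already-computed point, and then invokes Minty's theorem; your level-by-level bookkeeping makes this causality explicit, and your fallback for a coupled $(u,s)$ system is never needed, since by the protocol $\cJ_{c,i_1,\ldots,i_q}\subseteq \cJ_{i_1,\ldots,i_{q-1}}$ contains only duals of the parent, which are updated before level $q$.
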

\begin{proof}
It is easy to see that $u_{ i_1,\ldots,i_{q} }^{k+1}  $ and $s_{j_{q} } ^{k+1}$ can be expressed as the warped/ metric resolvents  $J_{A_{ i_1,\ldots,i_{q} }}^{M_{ i_1,\ldots,i_{q} }+\sum_{i\in } M_{ i_1,\ldots,i_{q+1} }}$ and $J_{B_{j_q}^{-1}}^{N_{j_q}}$ evaluated at some point. Under Assumption \ref{assume_MN}, both resolvents \cite{plc_warped,fxue_jorc} are well-defined (i.e., single-valued and of full domain)  by Minty's theorem \cite[Theorem 21.1]{plc_book} or the extended \cite[Lemma 2.1]{fxue_jorc}.
\end{proof}

\subsection{Underlying tree-structured graph} \label{sec_tree}
There is a simple and flexible mechanism over a tree-structured graph
underlying the prototype \eqref{proto}. 

\subsubsection{A basic hierarchy of primal nodes} \label{sec_primal}
The equalities among the $n$ agents $u_i$ for $i=0,\ldots,n-1$ can be  expressed using at least exactly $(n-1)$ simple equalities, which is indicated by the  last line of \eqref{pds} as a special instance. Tree consisting of $n$ nodes and $(n-1)$ edges is an ideal graph structure for this minimal representation, where each agent $u_i$ defines a primal node, and each of $(n-1)$ simple equalities is expressed by an edge between two nodes, as shown in Fig. \ref{fig_whole}.
It is also suggested by \eqref{pds} that $(A_i,u_i)$ always appear as pairs. Unlike the existing works that usually represent the nodes by $A_i$ \cite{bredies_lift}, we denote them by $u_i$, since the edges are treated in our interpretation as the equalities and message passing  between the local agents $u_i$, rather than the operators $A_i$. 

The nodes are arranged  hierarchically to at most $(n-1)$ levels\footnote{The case of $(n-1)$ levels corresponds to a fully sequential graph, as shown in Fig. \ref{fig_decenter}.}, where $u_0\rightarrow u_{i_1} \rightarrow \cdots u_{i_1,\ldots,i_q}  \rightarrow \cdots $ is a typical lineage from the root $u_0$ to a leaf, where
 the $u_i$ for $i\in [n-1]$ are renamed by a path-based indexing principle as $u_{i_1,\ldots,i_{q}}$ in the $q$-th level, in order to explicitly record its lineage up to the root $u_0$. 
{Each node $u_i$ with $i\in [n-1]$ is uniquely identified by a multi-index $i_1,\ldots,i_q$ using one-based indexing (i.e., starting from 1) at each level. Specifically, $u_{i_1,\ldots,i_{q}}$ (in level-$q$)  refers to the $i_q$-th child of the node $u_{i_1,\ldots,i_{q-1}}$  (in level-$(q-1)$), recursively tracing back to the root $u_0$. For example, the node $u_{i_1,i_{2}}$ (in level-2) represents the $i_2$-th child of $u_{i_1}$ (in level-1), which itself is the $i_1$-th child of the root  $u_0$.
 
In the first level, the subset  $\cI_1$ collect the original indices in $[n-1]$ of all children $u_{i_1}$ of the root $u_0$, and $|\cI_1|$---the cardinality of $\cI_1$---denotes  the number of children of $u_0$. Thus, the level-1 contains the nodes $\{ u_{i_1} \}_{i_1=1}^{|\cI_1|}$.
For the level $q\ge 2$,  we use $\cI_{q,i_1,\ldots,i_{q-1}}$ to denote the original indices in $[n-1]$ of all the children of the node $u_{i_1,\ldots,i_{q-1}}$. The children of $u_{i_1,\ldots,i_{q-1}}$ are denoted as $u_{i_1,\ldots,i_{q-1},i_{q}}$ for $i_{q} =1,\ldots,   |\cI_{q,i_1,\ldots,i_{q-1}} | $. 
For example, any child of $u_{i_1,i_2}$ corresponds to $u_i$ for some $i\in \cI_{3,i_1,i_{2}} \subseteq [n-1]$ in the original indexing. Here, 3 indicates this node $u_i$ is in level-3. The number of children of  $u_{i_1,i_2}$ is the cardinality $|\cI_{3,i_1, i_{2}} |$.
In this way,  the correspondence between multi-indexing $i_1,\ldots,i_q$ and single index $i$ is given as $\big\{ u_{i_1,\ldots,i_q} \big\}_{i_q=1}^{|\cI_{q,i_1,\ldots,i_{q-1}}|}
= \big\{ u_i  \big\}_{i \in \cI_{q,i_1,\ldots,i_{q-1}} }$.  Refer to Remark \ref{rmk_notation} for further detailed explanations.
}

\begin{figure} [H]
\centering
\hspace*{-.1cm}
\scalebox{0.98} {
\begin{tikzpicture}[
    every node/.style = {
        draw,                   
        rounded corners=1mm,    
        minimum height=5mm,    
        inner xsep=2mm,         
        inner ysep=0.2mm,         
        font=\small             
    },
    level distance = 3cm,       
    sibling distance = 2cm      
]

\node (u0) at (0, 0) {$u_0$};

\node (u1) at (3, 0) {$u_{i_1}$}; 
\node (u2) at (6, 0)  {$u_{i_1,i_2}$}; 
\node[draw=none] at (7.5, 0)  {$\cdots$}; 
\node (u3) at (9, 0)  {$u_{i_1,\ldots,i_q}$}; 
\node[draw=none] at (11.5, 0)  {$\cdots$}; 
\draw (7.8,0) -- (u3);

\draw (u0) -- (u1)
node[draw=none, midway, above, sloped, text=gray] {$w_{i_1}$}   
node[draw=none, midway, below, sloped, text=gray] {$M_{i_1}$};

\draw (u1) -- (u2)
node[draw=none, midway, above, sloped, text=gray] {$w_{i_1,i_2}$}   
node[draw=none, midway, below, sloped, text=gray] {$M_{i_1,i_2}$};

\draw (u2) -- (7,0);
\draw (u3) -- (11.2,0)
node[draw=none, midway, above, sloped, text=gray] {$w_{i_1,\ldots,i_q}$}   
node[draw=none, midway, below, sloped, text=gray] {$M_{i_1,\ldots,i_q}$};

\node[draw=none, font=\small, text=gray] at (0, -.6) {$0$};
\node[draw=none, font=\small, text=gray] at (3, -.6) {$i_1\in\cI_1$};
\node[draw=none, font=\small, text=gray] at (6, -.6) {$i_2\in [|\cI_{2,i_1}|]$};
\node[draw=none, font=\small, text=gray] at (9, -.6) 
{$i_q\in [|\cI_{q,i_1,\ldots,i_{q-1}}|]$};

\node[draw=none, font=\small, text=gray] at (3, .6) {$\{u_{i}\}_{i\in \cI_1}$};
\node[draw=none, font=\small, text=gray] at (6, .6) {$\{u_{i}\}_{i\in \cI_{2,i_1}}$};
\node[draw=none, font=\small, text=gray] at (9, .6) {$\{u_{i}\}_{i\in \cI_{q,i_1,\ldots,i_{q-1}}}$};

\node[draw=none, font=\small, text=gray] at (0, -1) {root};
\node[draw=none, font=\small, text=gray] at (3, -1) {level-1};
\node[draw=none, font=\small, text=gray] at (6, -1) {level-2};
\node[draw=none, font=\small, text=gray] at (9, -1) {level-$q$};

\draw[dashed,thick] (-.7, -1.4)  -- (12, -1.4) ;
\node[dashed] (s1) at (0.5, -2.3) {$s_{j}$ for $j\in  \cJ_0$}; 
\node[dashed] (s2) at (3, -2.3) {$s_{j}$ for $j\in  \cJ_{i_1}$}; 
\node[dashed] (s3) at (6.1, -2.3) {$s_{j}$ for $j\in  \cJ_{i_1,i_2}$}; 
\node[dashed] (s4) at (9.6, -2.3) {$s_{j}$ for $j\in  \cJ_{i_1,\ldots,i_q}$}; 

\node[draw=none] at (-.1, -1.7)  {$L_j$}; 
\node[draw=none] at (3, -1.7)  {$L_j$}; 
\node[draw=none] at (6.1, -1.7)  {$L_j$}; 
\node[draw=none] at (9.5, -1.7)  {$L_j$}; 

\draw[dashed, bend right=30] (u0) to (s1);
\draw[dashed, bend right=30] (u1) to (s2);
\draw[dashed, bend left=30] (u2) to (s3);
\draw[dashed, bend left=30] (u3) to (s4);

\node[draw=none, font=\footnotesize, text=gray] at (.1, -3) {\bf correction};

\node[draw=none] (c1) at (2.3, -3.2) {$\displaystyle \sum_{j\in \cJ_{c,i_1}} L_j^* (s_{j}^{k+1} - s_j^k ) $};
\draw[dash dot, bend left=40] (u1) to (c1);

\node[draw=none] (c2) at (5.8, -3.2) {$\displaystyle \sum_{j\in \cJ_{c,i_1,i_2}} L_j^* (s_{j}^{k+1} - s_j^k ) $};
\draw[dash dot, bend right=40] (u2) to (c2);

\node[draw=none] (c3) at (10, -3.2) {$\displaystyle 
\sum_{j\in \cJ_{c,i_1,\ldots, i_{q}}} L_j^* (s_{j}^{k+1} - s_j^k ) $}; 
\draw[dash dot, bend right=30] (u3) to (c3);

\draw[dashed,thick] (-.7, 1)  -- (12, 1) ;
\node[draw=none, font=\footnotesize, text=gray] at (0.1, 1.6) {\bf cocoercive};

\node[draw=none] (p1) at (2.5, 1.6) {$C_l u_0^{k+1}$ for $l\in  \cP_{i_1}$}; 
\node[draw=none] (p2) at (5.6, 1.6) {$C_l u_{i_1}^{k+1}$ for $l\in  \cP_{i_1,i_2}$}; 
\node[draw=none] (p3) at (9.2, 1.6) {$C_l u_{i_1,\ldots,i_{q-1}}^{k+1}$ for $l\in  \cP_{i_1,\ldots,i_q}$}; 

\draw[dashed, bend right=30] (u1) to (p1);
\draw[dashed, bend right=30] (u2) to (p2);
\draw[dashed, bend right=30] (u3) to (p3);

\node[draw=none, font=\footnotesize, text=gray, align=left] at (.5, 0.7) {\bf primal  variables};
\node[draw=none, font=\footnotesize, text=gray] at (1.5, -1.8) {\bf dual variables};

\end{tikzpicture} } 
\vskip-.25cm
\caption{An ancestral line of a tree with dual assignments and  cocoercive loading. }
\label{fig_whole}
\vskip-.25cm
\end{figure}

\subsubsection{Implication of the edges}
As mentioned above, the edge between $\allowbreak  u_{i_1,\ldots,i_{q-1}}$ and one of its children $u_{i_1,\ldots,i_q}$ represents the simple equality of $u_{i_1,\ldots,i_{q-1}} = u_{i_1,\ldots,i_{q}}$. This is much more general than, but equivalent to the last line of \eqref{pds}. For this parent-child pair, we introduce an auxiliary variable $w_{i_1,\ldots,i_{q}}$ and a weight  $M_{i_1,\ldots,i_{q}}$ between them, as shown in Fig. \ref{fig_whole}. Thus, any node $u_{i_1,\ldots,i_q}$ is linked to its parent (if any\footnote{In a tree graph, any node except for the root must have its own parent.}) $u_{i_1,\ldots,i_{q-1}}$ via $(w_{i_1,\ldots,i_q}, M_{i_1,\ldots,i_q})$ and to its children (if any\footnote{In a tree graph, any node except for the leaf must have its own children.}) $u_{i_1,\ldots,i_{q+1}}$ via $(w_{i_1,\ldots,i_{q+1}}, M_{i_1,\ldots,i_{q+1}})$. 
The pair $(w_{i_1,\ldots,i_q}, M_{i_1,\ldots,i_q})$  carries clear meaning in the $u_{i_1,\ldots,i_q}$-step of \eqref{proto}: $w_{i_1,\ldots,i_q}$ serves as a Lagrangian multiplier, while $M_{i_1,\ldots,i_q}$ acts as a weighting metric in the augmented Lagrangian technique. See Sect. \ref{sec_dynamical} for detailed explanations.

\subsubsection{Dual variables} 
In light of  the third line of \eqref{pds}, the dual element $(B_j^{-1}, D_j^{-1}, s_j)$ for $j\in [m]$ always appears together as a tuple.  After fixing the primal tree, we assign each node $u_{i_1,\ldots,i_q}$ with a number of $s_j$, such that these $s_j$ is updated synchronously with this primal node. Denote the index subset of the $s_j$ associated with  $u_{i_1,\ldots,i_q}$ by $\cJ_{i_1,\ldots,i_{q}} \subseteq [m]$. 
This assignment can be arbitrary, provided that each $s_j$ is assigned to some non-leaf primal node $u_i$ exactly once.
Again, the main skeleton of our graph consists of the primal variables only, with the dual part $s_j$ merely attached as an auxiliary component to the primal node.

\subsubsection{Cocoercive operators}
Each node $u_{i_1,\ldots,i_q}$  loads  a number of cocoercive operators $C_l$, which are evaluated at its parent  $u_{i_1,\ldots,i_{q-1}}$ due to causality.
This loading can also be arbitrary, provided each $C_l$ is loaded by some non-root primal node $u_{i_1,\ldots,i_{q}}$ exactly once (as the root $u_0$ lacks a parent to evaluate $C_l$).
Denote the index subset of the $C_l$ in $[p]$ associated with  $u_{i_1,\ldots,i_q}$ by $\cP_{i_1,\ldots,i_{q}}$. This is formally written for the node $u_{i_1,\ldots,i_q}$  (for $q\ge 1$) as  $\sum_{l\in \cP_{i_1,\ldots,i_q}} C_l u_{i_1,\ldots,i_{q-1}}^{k+1} $ in \eqref{proto}.

\subsubsection{Corrections} \label{sec_correction}
The assignments of those $s_j^k$ to a node $u_{i_1,\ldots,i_{q-1}}$ (i.e., $j\in \cJ_{i_1,\ldots,i_{q-1}}$) inevitably  cause the error $e_{i_1,\ldots,i_{q-1}}  = \sum_{j\in \cJ_{i_1,\ldots, i_{q-1}}} L_j^* (s_j^k -s_j^{k+1} )$ due to causality, then the correction taking the  form of  $\sum_{j\in \cJ_{i_1,\ldots, i_{q-1}}} L_j^* (s_j^{k+1} - s_j^k) (=- e_{i_1,\ldots,i_{q-1}})$ has to be performed  and shared by the updates of anyone of its children $u_{i_1,\ldots,i_q}$ for convergence issue.
Each child $u_{i_1,\ldots,i_{q}}$ is responsible for correcting a portion of the total error $e_{i_1,\ldots,i_{q-1}}$, precisely given as $\sum_{j\in \cJ_{c,i_1,\ldots,i_{q}}}  L_j^* (s_j^{k+1} - s_j^k)$, where the subscript $c$ denotes {\it correction}.  The distribution of the correction $-e_{i_1,\ldots,i_{q-1}}$ over the children $u_{i_1,\ldots,i_{q}}$ for $i_q=1,\ldots, |\cI_{q,i_1,\ldots,i_{q-1}}|$ can be arbitrary, as long as $\{ \cJ_{c,i_1,\ldots,i_q} \}_{i_q=1}^{|\cI_{q,i_1,\ldots,i_{q-1}}|}$ forms a partition of  $\cJ_{i_1,\ldots,i_{q-1}}$, i.e., 
\[
\sum_{i_q=1}^{ |\cI_{q,i_1,\ldots,i_{q-1}}| } \sum_{j\in \cJ_{c,i_1,\ldots,i_{q}}}  L_j^* (s_j^{k+1} - s_j^k)
=  \sum_{j\in \cJ_{i_1,\ldots,i_{q-1}}}  L_j^* (s_j^{k+1} - s_j^k)
=-e_{i_1,\ldots,i_{q-1}} . 
\]

\subsection{Dynamical mechanism} \label{sec_dynamical}
Before elaborating the dynamical mechanism behind the prototype \eqref{proto}, we make the following remark  concerning the abbreviated notations and formal consistency.
{\begin{remark} \label{rmk_notation}
{\rm (i)} In the remainder of this paper, we frequently use the shorthand $i_q$ to denote the full multi-index $i_1,\ldots,i_q$ for variables such as $A$, $u$, $w$ and $M$ (as well as the forthcoming variables $z$ and $C$) for brevity. Specifically, we write $u_{i_q}$ instead of $u_{i_1,\ldots,i_q}$, provided that omitting the leading indices $i_1,\ldots,i_{q-1}$ introduces no ambiguity, i.e., $u_{i_1,\ldots,i_q} := u_{i_q}$.
In other words, the notation $u_{i_q}$ must be associated with some (unique) $i_1\ldots,i_{q-1}$ in preceding levels, such that $u_{i_q} = u_{i_1,\ldots,i_{q-1},i_q}$, and $u_{i_1,\ldots,i_{q-1}} (:=u_{i_{q-1}})$ is the parent of  $u_{i_1,\ldots,i_{q}} (:=u_{i_{q}})$. For example, the shorthand notation $u_{i_3}$ implies that this node is in level-3, and there exist unique $i_1$ and $i_2$, such that $u_{i_3} = u_{i_1,i_{2},i_3}$. This $u_{i_3}$ is the $i_3$-th child of $u_{i_1,i_2}$. Again, $u_{i_1,i_2}$ can also be denoted as $u_{i_2}$ in short, which is the $i_2$-th child of $u_{i_1}$---the $i_1$-th child of the root $u_0$. In all, both notations of  $u_{i_1,\ldots,i_{q}}$ and $u_{i_{q}} $  carry exactly the same meaning and are used interchangeably depending on the context. 

{\rm (ii)} According to Sect. \ref{sec_tree}, we extend the definition settings to the boundary cases as follows, in order to ensure consistency with the internal node:
\begin{itemize}
\item {\rm Root:} When  $q=0$, we set $u_{i_1,\ldots,i_q} = u_0$,  $\cJ_{i_1,\ldots,i_q}=\cJ_0 $,
 $\cP_{i_1,\ldots,i_q}=\cP_0 =\varnothing$, $\cJ_{c,i_1,\ldots,i_q}= \varnothing$;
 
\item {\rm Leaf nodes:}  When  $u_{i_1,\ldots,i_q}$ is a leaf, $\cI_{q+1,i_1,\ldots,i_q} = \varnothing$, 
   $\cJ_{i_1,\ldots,i_q}= \varnothing$;

\item {\rm Children of root:}  When  $q=1$, we set
 $\cI_{q,i_1,\ldots,i_{q-1}} = \cI_1 $.
\end{itemize}

{\rm (iii)} The correspondence between single index $i \in \cI_{q,i_1,\ldots,i_{q-1}}$ and path-based multi-indexing $i_q \in [|\cI_{q,i_1,\ldots,i_{q-1}}|]$ has been defined in Sect. \ref{sec_primal}. In general, we adopt $i_q$ (which already implies the existence of unique $i_1,\ldots,i_{q-1}$ in preceding levels as mentioned in (i)) in the context of the parent-child relations within the tree structure, and use $i$ in the original indexing $[n-1]$ without the context of tree. We frequently switch between these two indexing systems where unambiguous.
\end{remark} }

For the general $q$-th level (with $q\ge 1$),  the update begins with computing the edge $w_{i_1,\ldots,i_{q}}^{k+1}$, which enforces the consensus constraint of both nodes it links:
\be \label{w}
0  = \underbrace{   w_{i_1,\ldots,i_{q}}^{k+1} -w_{i_1,\ldots,i_{q} }^{k}  }
_\textrm{self-update} + \underbrace{  u_{i_1,\ldots,i_{q} }^{k}  - u_{i_1,\ldots,i_{q-1} }^{k+1}  }_\textrm{consensus constraint}, \quad 
\forall i_q \in \big[\big| \cI_{q,i_1,\ldots, i_{q-1} } \big| \big],
\ee
where $ u_{i_1,\ldots,i_{q-1} }$ is the unique common parent of $ u_{i_1,\ldots,i_{q} }$ for $i_q=1,\ldots, |I_{q,i_1,\ldots,i_{q-1}}|$. 
This is quite a standard update of Lagrangian multiplier within the ADMM framework  \cite{salsa,csalsa}. 

Second,  the update of  $u_{i_q}$ is driven by consensus constraints with its parent and children. These equality constraints are associated
with Lagrangian multipliers  $w_{i_q}$ and  $w_{i_{q+1}}$,
 and weighting metrics $M_{i_q}$ and $M_{i_{q+1}}$,  respectively:
\be \label{u}
\begin{aligned}
0 & \in   A_{i_{q} } u_{ i_{q} }^{k+1}   + 
\underbrace{  M_{ i_{q} } \big(u_{ i_{q} }^{k+1}  - u_{  i_{q-1} }^{k+1}  -   w_{ i_{q} }^{k+1} \big) }
_\textrm{link to (unique) parent}   
+ \sum_{i_{q+1} =1}^{ |\cI_{q+1,i_1,\ldots, i_q} | }
\underbrace{  M_{ i_{q+1} }  \big( u_{ i_{q} }^{k+1}  - 
u_{ i_{q+1} }^{k}  +  w_{  i_{q+1} }^k \big) }
_\textrm{link to all children}  \\
& + \underbrace{  \sum_{l\in \cP_{i_1,\ldots,i_q}} C_l u_{ i_{q-1}}^{k+1} }
_\textrm{load of cocoercive} 
+ \underbrace{  \sum_{j \in\cJ_{i_1,\ldots, i_{q} } } L_{j}^* s_{j}^{k} }
_\textrm{load of dual }  + 
\underbrace{ \sum_{j\in \cJ_{c,i_1,\ldots,i_{q}}} L_j^* (s_{j}^{k+1} - s_j^k ) }_\textrm{correction for load by parent} ,
\quad \forall i_q\in \big[\big| \cI_{q,i_1,\ldots, i_{q-1} } \big| \big] .
\end{aligned}
\ee

This step is essentially a consensus-ADMM for the primal nodes (see the first line of \eqref{u}), incorporating the cocoercive by the form of forward-backward splitting \cite{plc} and dual elements following the style of Chambolle--Pock \cite{cp_2011}.
To see the consensus-ADMM spirit of  \eqref{w} and the first line of \eqref{u} more clearly,  let $A_{i_q} = \partial f_{i_q}$ for some proper, lower semi-continuous and convex function $f_{i_q}$. Then, the  first line of \eqref{u} is equivalent to:
\begin{align*}
u_{i_q}^{k+1} =& \arg\min_u f_{i_q} (u) - \big\langle u-u_{i_{q-1}}^{k+1} \big| M_{i_q} w_{i_q}^{k+1} \big\rangle
+\frac{1}{2} \big\| u-u_{i_{q-1}}^{k+1}  \big\|_{M_{i_q}}^2\\
+& \sum_{i_{q+1} =1}^{ |\cI_{q+1,i_1,\ldots, i_q} | }
\Big( \big\langle u-u_{i_{q+1}}^{k} \big| M_{i_{q+1}} w_{i_{q+1}}^{k} \big\rangle
+\frac{1}{2} \big\| u-u_{i_{q+1}}^{k}  \big\|_{M_{i_{q+1}}}^2
\Big).
\end{align*}
This is the well-known augmented Lagrangian formulation, stemming from the equality-constrained (i.e., consensus) minimization problem with respect to the current primal node $u_{i_q}$:
\[
\min_u f_{i_q} (u), \quad \textrm{s.t.\ } u= u_{i_{q-1}}\textrm{\ and\ } u= u_{i_{q+1}},\ \forall i_{q+1} \in [|\cI_{q+1,i_1,\ldots, i_q} | ].
\]
Here, $w_{i_q}$ acts as the Lagrangian multiplier associated with the constraint $u_{i_q} = u_{i_{q-1}}$, and is updated by quite a standard step \eqref{w}, the last term
$\frac{1}{2} \big\| u-u_{i_{q-1}}^{k+1}  \big\|_{M_{i_{q}}}^2$ is the augmentation equipped with the metric $M_{i_q}$. Both $w_{i_{q+1}}$ and $M_{i_{q+1}}$ play exactly the same roles for the constraint $u_{i_q} = u_{i_{q+1}}$.

Finally, the dual variable $s_{j_q}$ for $j_q\in \cJ_{i_1,\ldots,i_q} $ is updated in a self-driven manner:
\[
0  \in     B_{j_{q} }^{-1} s_{j_{q} } ^{k+1} +
\underbrace{  N_{j_q} (s_{j_{q} }^{k+1} - s_{j_{q} }^{k})}
_\textrm{self-evolution} + D_{j_{q} }^{-1} s_{j_{q} }^{k}
- \underbrace{  L_{j_{q} } u_{i_1,\ldots,i_{q} }^{k+1} }
_{\substack{\text{call-back for} \\ \text{previous load}}} 
+ b_{j_q},
\quad  \forall j_q \in   \cJ_{i_1,\ldots,i_q} . 
\]
In sharp contrast to \cite{fjaa_pds,tyc_pds}, our update of $s_{j_q}^k$ utilizes $B_{j_q}^{-1}$ following the style of Chambolle--Pock \cite[Algorithm 1]{cp_2011} and incorporates a forward step of $D_{j_q}^{-1}$ as in V\~{u} \cite[Theorem 3.1]{vu_2013}. This step is also a classic forward-backward splitting \cite{plc}, where the forward step using $D_{j_q}^{-1}$  is followed by a backward step of using resolvent of $B_{j_q}^{-1}$.

Summarizing all the above leads to the prototype \eqref{proto}, following the update order as
\[
\underbrace{  u_0^{k} \rightarrow \big( s_{j_0}^{k},}
_\textrm{root}  
\underbrace{ w_{i_1}^{k} \big) \rightarrow 
u_{i_1}^{k} 
\rightarrow   (s_{j_1}^k,  }_\textrm{level-1} 
 w_{i_2}^k) \rightarrow 
 \cdots  \rightarrow
 \big( s_{j_{q-1}}^{k}, 
 \underbrace{  w_{ i_{q}}^{k} \big)  \rightarrow
 u_{ i_q}^{k}  \rightarrow
 \big( s_{j_q}^{k},}_\textrm{level-$q$} 
 w_{ i_{q+1}}^{k} \big) 
\rightarrow  \cdots , 
\]
where $j_q\in \cJ_{i_1,\ldots,i_q}$. 
The protocol of constructing the tree  in Fig. \ref{fig_whole} is summarized below.

\noindent
\fbox{
    \parbox{0.95\linewidth}{
    {\bf Protocol}
\begin{itemize}
\item {\it Defining nodes:} The tuples of $(A_i, u_i)$ and $(B_j^{-1},D_j^{-1}, s_j)$ always appear together as the single units.  Any one of $\{u_i\}_{i=0}^{n-1}$ can be treated as the root $u_0$;
\item {\it Hierarchical structure:} The hierarchical arrangements of $u_i$ can be arbitrary, provided that each $u_i$ appears exactly once;
\item {\it Assigning dual to primal nodes:} Any non-leaf nodes $u_i$ can be associated with any finite number of dual variables $s_j$, provided that each $s_j$ is assigned exactly once;
\item {\it Loading cocoercive operators:} Any non-root nodes $u_i$ can  load  any finite number of cocoercive operators $C_l$, evaluated at the parent of $u_i$ (due to causality and parent-child link), provided that each $C_l$ is loaded exactly once;
\item {\it Corrections:} The error caused by assigning each $s_j$ (to some non-leaf node $u_i$) must be corrected exactly once, taking the form of $L_j^* (s_j^{k+1} -s_j^k)$ in the update of any child of this associated $u_i$.
\end{itemize}  } }

\vskip.2cm
The prototype \eqref{proto} is {\it level-synchronous}, where  $u_{i_q}$ within the same level-$q$ for $q=1,\ldots, |\cI_{q,i_1,\ldots,i_{q-1}}|$ are  updated synchronously, followed by the steps of those $s_j$ with $j\in \cJ_{i_1,\ldots,i_q}$ assigned to $u_{i_q}$. 
Moreover, according to \cite[Definition 2.3]{full} or \cite[Definition 2.6]{bredies_lift}, a tree structure essentially defines an {\it equivalent class} of frugal splitting algorithms, in a sense that they can be obtained with different metrics $M_i$ and $N_j$, or by swapping the order of $(A_i,u_i)$ and $(B_j^{-1}, D_j^{-1}, s_j)$. However, changing the tree structure, dual assignment or cocoercive loading gives fundamentally distinct schemes.

\subsection{Typical graph structures}
The underlying graph structure fundamentally shapes the distributed computing paradigm, often imposing a trade-off where centralization enables parallelism, whereas decentralization necessitates sequential processing to ensure consensus. 

\subsubsection{Centralized network}
Fig. \ref{fig_center} shows a centralized network realized by 
a 2-level (depth=1) tree, where all the $u_i$ except for the root $u_0$ are located in level-1 as leaves. This exactly represents the last line of \eqref{pds}: $u_0 = u_i$, $\forall i\in [n-1]$. Because the leaf nodes cannot be assigned to any dual variables, one has to assign all $s_j$ to the root $u_0$. The root $u_0$ acts as a central coordinator that computes the global sum across all nodes, broadcasting the aggregated result to the entire network. 
 
\begin{figure} [H]
\centering
\hspace*{-1cm}
\scalebox{0.98} {
\begin{tikzpicture}[
    every node/.style = {
        draw,                   
        rounded corners=1mm, 
        minimum height=5mm,  
        inner xsep=2mm,         
        inner ysep=0.2mm,          
        font=\small              
    },
    level distance = 3cm,    
    sibling distance = 2cm   
]

\node (u0) at (0, 0) {$u_0$};

\node (u1) at (2, 1) {$u_{1}$}; 
\node[draw=none] at (2, 0)  {$\vdots$}; 
\node (u2) at (2, -1)  {$u_{n-1}$}; 

\draw (u0) -- (u1)
node[draw=none, midway, above, sloped, text=gray] {$w_{1}$}   
node[draw=none, midway, below, sloped, text=gray] {$M_{1}$};

\draw (u0) -- (u2)
node[draw=none, midway, above, sloped, text=gray] {$w_{n-1}$}   
node[draw=none, midway, below, sloped, text=gray] {$M_{n-1}$};

\node[dashed] (s1) at (-2, 1) {$s_{1}$}; 
\node[draw=none] at (-2, 0)  {$\vdots$}; 
\node[dashed] (s2) at (-2, -1)  {$s_{m}$}; 
\node[draw=none] at (-1, .8)  {$L_1$}; 
\node[draw=none] at (-1.1, -.8)  {$L_m$}; 

\draw[dashed, bend right=30] (u0) to (s1);
\draw[dashed, bend left=30] (u0) to (s2);

\node[draw=none, font=\footnotesize, text=gray] at (0.1, -.4) {root};
\node[draw=none, font=\footnotesize, text=gray] at (2, -.4) {level-1};

\node[draw=none, font=\small, text=gray] at (2.4, .3) {$\cI_1 = [n-1] $};
\node[draw=none, font=\small, text=gray] at (-2, 0.3) {$\cJ_{0} = [m]$};

\node[draw=none, font=\small, text=gray] at (1.7, 1.6) {\bf primal variables};
\node[draw=none, font=\small, text=gray] at (-1.7, 1.6) {\bf dual variables};
\node[draw=none, font=\small, text=gray] at (6, 1.6) {\bf correction \& cocoercive};

\node[draw=none] (c1) at (6, .5) {$\sum_{j\in \cJ_{c,i}} L_j^*(s_j^{k+1} - s_j^{k})$}; 
\node[draw=none] (p1) at (6, -.5) {$\sum_{l\in \cP_{i}} C_l u_0^{k+1}  $}; 
\draw[dash dot] (2.2, 0) to (c1);
\draw[dotted, bend right=20] (2.2, -0.1) to (p1);

\end{tikzpicture} } 
\vskip-.25cm
\caption{A star-shaped tree for centralized computing. }
\label{fig_center}
\vskip-.25cm
\end{figure}

The corresponding centralized scheme is given as
{\small
\be \label{proto_center}
\left\lfloor \begin{aligned}
0 & \in  A_0 u_0^{k+1} + \sum_{i=1}^{n-1} M_i (u_0^{k+1}  - u_{i}^{k} 
+ w_{i}^k ) + \sum_{j =1}^m L_{j}^* s_{j}^{k}  - a ; \\
0 & =  w_{i}^{k+1} -w_{i}^{k} + u_{i}^{k}  - u_0^{k+1},
\quad \forall i \in [n-1] ; \\
0 & \in    B_{j}^{-1} s_{j} ^{k+1} + N_j (s_{j}^{k+1} - s_{j}^{k}) + D_{j}^{-1} s_{j}^{k}  - L_{j} u_0^{k+1} + b_j,
\quad \forall  j \in [m] ; \\
0 & \in   A_{i} u_{i}^{k+1}   + M_i (u_{i}^{k+1}  - u_0^{k+1}  -   w_{i}^{k+1} )  +  \sum_{ l \in \cP_{i} } C_l u_0^{k+1}  + \sum_{ j\in \cJ_{c,i} } L_{j}^* (s_{j}^{k+1} - s_{j}^{k}),
\quad \forall i\in [n-1] ,  \\
\end{aligned} \right. 
\ee}%
where $\{ \cJ_{c,i} \}_{i=1}^{n-1} $ and $\{ \cP_{i} \}_{i=1}^{n-1} $ form  partitions of $[m]$ and $[p]$, respectively. 

This  is generally considered undesirable due to default network settings, privacy concerns and cost issues \cite{fjaa_ring}.
 On the other hand, the fully parallel computing of $u_i$ for $i\in [n-1]$ is achieved at cost of the centralization.
In particular, if $m=0$ and $p=0$, our Fig. \ref{fig_center} reduces to the parallel-up/star graph \cite[Fig. 1-(c)]{fjaa_graph}, \cite[Table 1]{bredies_lift}. Note that our tree structure does not include the parallel-down graph \cite[Fig. 1-(d)]{fjaa_graph}, \cite[Table 1]{bredies_lift}, which is left to future work.

\subsubsection{Decentralized network}
This is realized by a tree with $n$-levels (depth = $(n-1)$), where each $u_i$ is located in the $i$-th level, as shown in Fig. \ref{fig_decenter}. This corresponds to a chain of equalities: $u_0=u_1$, $u_1=u_2$, ..., $u_{n-2} = u_{n-1}$ from the root $u_0$ to the leaf $u_{n-1}$. The dual variables $s_j$ can be assigned to any node $u_i$ except for the leaf $u_{n-1}$. The caused error must be corrected at the next node $u_{i+1}$---the only child of $u_i$.

\begin{figure} [H]
\centering
\hspace*{-.1cm}
\scalebox{0.98} {
\begin{tikzpicture}[
    every node/.style = {
        draw,                  
        rounded corners=1mm, 
        minimum height=5mm, 
        inner xsep=2mm,         
        inner ysep=0.2mm,      
        font=\small             
    },
    level distance = 3cm,     
    sibling distance = 2cm  
]

\node (u0) at (0, 0) {$u_0$};

\node (u1) at (3, 0) {$u_{1}$}; 
\node (u2) at (6, 0)  {$u_2$}; 
\node[draw=none] at (7.5, 0)  {$\cdots$}; 
\node (u3) at (9, 0)  {$u_{n-2}$}; 
\node (u4) at (11, 0)  {$u_{n-1}$}; 

\draw (u0) -- (u1)
node[draw=none, midway, above, sloped, text=gray] {$w_{1}$}   
node[draw=none, midway, below, sloped, text=gray] {$M_{1}$};

\draw (u1) -- (u2)
node[draw=none, midway, above, sloped, text=gray] {$w_{2}$}   
node[draw=none, midway, below, sloped, text=gray] {$M_{2}$};

\draw (u2) -- (7,0);
\draw (8, 0) -- (u3);
\draw (u3) -- (u4)
node[draw=none, midway, above, sloped, text=gray] {$w_{n-1}$}   
node[draw=none, midway, below, sloped, text=gray] {$M_{n-1}$};

\node[draw=none, font=\small, text=gray] at (0, .5) {root};
\node[draw=none, font=\small, text=gray] at (3, .5) {level-1};
\node[draw=none, font=\small, text=gray] at (6, .5) {level-2};
\node[draw=none, font=\small, text=gray] at (11, .5) {level-$(n-1)$};

\node[dashed] (s1) at (0.5, -1.3) {$s_{j}$ for $j\in  \cJ_0$}; 
\node[dashed] (s2) at (3, -1.3) {$s_{j}$ for $j\in  \cJ_1$}; 
\node[dashed] (s3) at (6, -1.3) {$s_{j}$ for $j\in  \cJ_2$}; 
\node[dashed] (s4) at (9, -1.3) {$s_{j}$ for $j\in  \cJ_{n-2}$}; 

\node[draw=none] at (.3, -.7)  {$L_j$}; 

\draw[dashed, bend right=30] (u0) to (s1);
\draw[dashed, bend right=30] (u1) to (s2);
\draw[dashed, bend left=30] (u2) to (s3);
\draw[dashed, bend left=30] (u3) to (s4);

\node[draw=none] (p1) at (8.5, 1.1) {$C_l u_{n-3}^{k+1}$ for $l\in  \cP_{n-2}$}; 
\node[draw=none] (c1) at (2.5, 1.1) {$ \sum_{j \in  \cJ_{0} } L_j^* (s_j^{k+1} - s_j^{k} ) $}; 
\draw[dashed, bend right=30] (u3) to (p1);
\draw[dashed, bend left=10] (u1) to (c1);
\node[draw=none, font=\small, text=gray] at (6.1, 1.1) {cocoercive};
\node[draw=none, font=\small, text=gray] at (0.2, 1.1) {correction};

\node[draw=none, font=\footnotesize, text=gray, align=left] at (8.2, 0.5) {\bf primal  variables};
\node[draw=none, font=\footnotesize, text=gray] at (4.5, -.8) {\bf dual variables};
\node[draw=none, font=\footnotesize, text=gray] at (8, -.8) {\it without loading cocoercive};

\node[draw=none, font=\footnotesize, text=gray, align=left] at (11.2, -.9) {\it Leaf is not \\ \it assigned  \\ \it with any $s_j$ };
\end{tikzpicture} } 
\vskip-.25cm
\caption{A fully sequential tree for decentralized computing. }
\label{fig_decenter}
\vskip-.25cm
\end{figure}

The associated decentralized scheme is given as:
{\small
\be \label{proto_decenter}
\left\lfloor \begin{aligned}
0 & \in  A_0 u_0^{k+1} +  M_1 (u_0^{k+1}  - u_{1}^{k} 
+ w_{1}^k ) + \sum_{j \in \cJ_0}  L_{j}^* s_{j}^{k}  - a ;\\
0 & \in   A_{i} u_{i}^{k+1}   + M_i (u_{i}^{k+1}  - u_{i-1}^{k+1}  -   w_{i}^{k+1} ) + M_{i+1}  ( u_{i}^{k+1}  - u_{i+1}^{k}  +  w_{i+1}^{k} )   \\
& +   \sum_{l \in \cP_i}  C_{l} u_{i-1}^{k+1} 
+ \sum_{j \in \cJ_i}  L_{j}^* s_{j}^{k} 
  + \sum_{ j\in \cJ_{c,i}  } L_{j}^* (s_{j}^{k+1} - s_{j}^{k}),
\quad \forall i\in [n-2]  ; \\
0 & \in   A_{n-1} u_{n-1}^{k+1}   + M_{n-1} (u_{n-1}^{k+1}  - u_{n-2}^{k+1}  -   w_{n-1}^{k+1} ) 
 +   \sum_{l \in \cP_{n-1}}  C_{l} u_{n-2}^{k+1} 
  + \sum_{ j\in \cJ_{c,n-1}  } L_{j}^* (s_{j}^{k+1} - s_{j}^{k}); \\
0 & =  w_{i}^{k+1} -w_{i}^{k} + u_{i}^{k}  - u_{i-1}^{k+1}  ,
\quad \forall i\in [n-1]  ; \\
0 & \in  B_{j}^{-1} s_{j} ^{k+1} + N_j (s_{j}^{k+1} - s_{j}^{k}) 
+ D_{j}^{-1} s_{j}^{k}  - L_{j} u_{i}^{k+1} + b_j,
\quad \forall  j \in \cJ_i,  \quad \forall i\in \{0\} \mcup [n-2] ,  \\
\end{aligned} \right. 
\ee}%
where $\cJ_{c,i}=\cJ_{i-1}$ by the protocol mentioned above. 

In this message-passing mechanism, each agent is updated using only local information from its immediate parent and children.
However, the inherent sequential dependencies in the message flow necessitate a fully sequential computing.
In particular, if $m=0$ and $p=0$, our Fig. \ref{fig_decenter} reduces to the ordinary sequential graph \cite[Example 3.2]{fjaa_graph}, \cite[Table 1]{bredies_lift}.

\section{Fixed-point frugal splitting operator}
\subsection{Fixed point iterations}
The next proposition rewrites the $(u,w,s)$-scheme \eqref{proto} in the  recently popular fixed-point iterative style of \cite{fjaa_pds,campoy_product,yura_lift}, by a simple variable substitution. 
\begin{proposition}  \label{p_T}
Under Assumption \ref{assume_MN}, the prototype \eqref{proto} is equivalent to
\be \label{proto_T}
\begin{bmatrix}
z_{ i_q}^{k+1} \\ s_{j_q}^{k+1}  \end{bmatrix}
= \begin{bmatrix}
z_{ i_q}^{k} \\ s_{j_q}^{k}  \end{bmatrix}
+ \begin{bmatrix}
 u_{ i_{q}}^{k+1}  -  u_{ i_{q-1} }^{k+1}  \\ 
 N_{j_q}^{-1} \big(  v_{j_q}^{k+1} -  J_{N_{j_q} B_{j_q}}
 ( N_{j_q} s_{j_q}^k + v_{j_q}^{k+1} ) \big) 
  \end{bmatrix},\ 
\begin{array}{l}     \forall q\ge 1 \\ \forall j_q \in [m] 
\end{array}  
\ee
where  $v_{j_q}^{k+1} = L_{j_q} u_{  i_q}^{k+1} - D_{j_q}^{-1} s_{j_q}^{k} - b_{j_q}$ with $j_q\in \cJ_{i_1,\ldots,i_q}$, $u_{ i_q}^{k+1}$ for $q\ge 0$ are given by \eqref{uzs}.
Both schemes of \eqref{proto} and \eqref{proto_T} are linked via $z_{i_q}^k = u_{i_q}^k - w_{i_q}^k$.
\end{proposition}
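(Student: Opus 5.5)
The plan is to verify the equivalence by direct substitution $z_{i_q}^k = u_{i_q}^k - w_{i_q}^k$, handling the edge ($w$/$z$) updates, the dual ($s$) updates, and the primal ($u$) updates separately. I will also make the form of the $u$-step in \eqref{uzs} explicit as the metric resolvent obtained from the primal inclusions.

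\textbf{Edge updates.} The update \eqref{w} reads $w_{i_q}^{k+1} = w_{i_q}^k - u_{i_q}^k + u_{i_{q-1}}^{k+1}$. Hence
\[
u_{i_q}^{k+1} - w_{i_q}^{k+1} = u_{i_q}^{k+1} - w_{i_q}^k + u_{i_q}^k - u_{i_{q-1}}^{k+1}.
\]
I do not yet see how to match this with the first block of \eqref{proto_T}, so I will recompute from the intended identity. That block says $z_{i_q}^{k+1} = z_{i_q}^k + u_{i_q}^{k+1} - u_{i_{q-1}}^{k+1}$. The most natural reading is that $z$ tracks the quantity $u_{i_q}^{k} - w_{i_q}^{k}$ appearing in the parent's step and in the child's own step. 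In particular, the combination $u_{i_{q-1}}^{k+1} + w_{i_q}^{k+1} = u_{i_q}^k - w_{i_q}^k + 2u_{i_{q-1}}^{k+1} - \dots$ is what enters the child inclusion. So the key bookkeeping is:
\begin{itemize}
\item rewrite $M_{i_{q+1}}(u_{i_q}^{k+1} - u_{i_{q+1}}^k + w_{i_{q+1}}^k) = M_{i_{q+1}}(u_{i_q}^{k+1} - z_{i_{q+1}}^k)$;
\item rewrite $M_{i_q}(u_{i_q}^{k+1} - u_{i_{q-1}}^{k+1} - w_{i_q}^{k+1})$ using \eqref{w} as $M_{i_q}\bigl(u_{i_q}^{k+1} - 2u_{i_{q-1}}^{k+1} + z_{i_q}^k\bigr)$.
\end{itemize}
This shows each $u$-step depends only on the $z^k$'s, the new parent value, and the $s$'s.

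\textbf{$u$-step as a metric resolvent.} With these rewrites, each $u_{i_q}^{k+1}$ is $J_{A_{i_q}}^{M}$ with $M = M_{i_q} + \sum_{i_{q+1}} M_{i_{q+1}}$. It is evaluated at an explicit affine combination of $u_{i_{q-1}}^{k+1}$, $z_{i_q}^k$, the $z_{i_{q+1}}^k$, the cocoercive terms $C_l u_{i_{q-1}}^{k+1}$, and the dual terms $L_j^* s_j^k$ and $L_j^*(s_j^{k+1} - s_j^k)$. This is what \eqref{uzs} records; the root case uses the conventions of Remark \ref{rmk_notation}. Well-posedness follows as in the Fact after Assumption \ref{assume_MN}. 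Conversely, given the $(z,s)$ iteration, define $w_{i_q}^k := u_{i_q}^k - z_{i_q}^k$. Reversing the computation then recovers \eqref{w} and the primal inclusions, which gives the equivalence. I need to check that the $z$-update in \eqref{proto_T} is consistent with \eqref{w} under this definition. I expect that check, together with pinning down the exact sign conventions relating $z$ and $w$, to be the main obstacle.

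\textbf{Dual step.} The inclusion $0 \in B^{-1}s^{+} + N(s^{+} - s) + D^{-1}s - Lu^{+} + b$ is equivalent to $Ns + v \in (N + B^{-1})s^{+}$, where $v = Lu^{+} - D^{-1}s - b$. Set $y = Ns + v - Ns^{+} \in B^{-1}s^{+}$. Then $s^{+} \in By$, so $Ns + v \in y + NBy$, i.e. $y = J_{NB}(Ns + v)$. Single-valuedness holds because $NB$ is maximally monotone in the $N^{-1}$-metric. This gives
\[
s^{+} = s + N^{-1}\bigl(v - J_{NB}(Ns + v)\bigr),
\]
which is exactly the second block of \eqref{proto_T}. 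This part is routine via the inverse-resolvent (Moreau-type) identity.
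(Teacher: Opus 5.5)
You follow the paper's route. You substitute $z_{i_q}^k = u_{i_q}^k - w_{i_q}^k$, eliminate $w$ to obtain \eqref{uzs}, and convert the $s$-inclusion via the metric Moreau identity (you derive it by hand rather than citing it). Your two bullet rewrites of the primal inclusions are correct, and so is your derivation of the second block of \eqref{proto_T}.

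As written, however, the proposal is incomplete exactly where you flag the ``main obstacle'': the first block of \eqref{proto_T} is never verified. That step is already settled by your own display. Since $u_{i_q}^k - w_{i_q}^k = z_{i_q}^k$, your display reads $z_{i_q}^{k+1} = z_{i_q}^k + u_{i_q}^{k+1} - u_{i_{q-1}}^{k+1}$, which is precisely the first block.

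This also settles the converse and the sign conventions you were worried about. Under $w_{i_q}^k := u_{i_q}^k - z_{i_q}^k$ one has identically
\[
z_{i_q}^{k+1} - z_{i_q}^{k} - u_{i_q}^{k+1} + u_{i_{q-1}}^{k+1} = -\big( w_{i_q}^{k+1} - w_{i_q}^{k} + u_{i_q}^{k} - u_{i_{q-1}}^{k+1} \big),
\]
so the last line of \eqref{uzs} and \eqref{w} are the same equation, and nothing further needs checking. You had in fact already used this identity, in the form $w_{i_q}^{k+1} = u_{i_{q-1}}^{k+1} - z_{i_q}^k$, to obtain your second bullet.

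Your intermediate sentence about $u_{i_{q-1}}^{k+1} + w_{i_q}^{k+1}$ has the sign of $z_{i_q}^k$ wrong: it equals $2u_{i_{q-1}}^{k+1} - z_{i_q}^k$. This is harmless because you never use it. With these lines added, your argument is complete and coincides with the paper's proof.
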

\begin{proof}
Observe the $(u,w,s)$-scheme \eqref{proto}. Letting $z_{i_q}^k := u_{i_q}^k - w_{i_q}^k$ and  removing $w^k_{i_q}$, we obtain the equivalent $(u,z,s)$-scheme: 
\be \label{uzs}
\left\lfloor \begin{aligned}
& \hspace*{-.2cm} \textrm{(root)} \\
0 & \in   A_{0 } u_{ 0 }^{k+1}   +  \sum_{i_{1} =1}^{ |\cI_{1} | }
M_{i_1 }  \big( u_{0 }^{k+1}  - z_{i_1 }^{k}  \big)  
+  \sum_{j \in\cJ_{0 } } L_{j}^* s_{j}^{k}  -a ; \\
& \hspace*{-.2cm} \textrm{(the general $q$-th level with
 $q \ge 1$ and $\forall i_q \in \big[\big| \cI_{q,i_1,\ldots, i_{q-1} } \big| \big]$)}  \\
0 &  \in  A_{ i_{q} } u_{  i_{q} }^{k+1}   + 
M_{ i_{q} } \big( u_{ i_{q} }^{k+1} + 
z_{ i_{q} }^{k}  -2 u_{ i_{q-1} }^{k+1}   \big)     +  \sum_{i_{q+1} =1}^{ |\cI_{q+1,i_1,\ldots, i_q} | } M_{ i_{q+1} }  \big( u_{ i_{q} }^{k+1}  - 
z_{ i_{q+1} }^{k}  \big)  \\
& +  \sum_{l\in \cP_{i_1,\ldots,i_q}} C_l u_{ i_{q-1}}^{k+1}    
 +  \sum_{j \in\cJ_{i_1,\ldots, i_{q} } } L_{j}^* s_{j}^{k}   + 
\sum_{j\in \cJ_{c,i_1,\ldots,i_{q}}} L_j^* 
\big( s_{j}^{k+1} - s_j^k \big) ; \\
0 & =  z_{ i_{q}}^{k+1}  - 
z_{ i_{q}}^{k}  - u_{ i_{q}}^{k+1}   + u_{ i_{q-1} }^{k+1} . 
\end{aligned} \right. 
\ee

%

Here, the $s_j$-update is exactly the same as \eqref{proto}, which is omitted here to save page space. Then the proof is completed by rewriting the $s_j$-steps in \eqref{proto}  in terms of $J_{N_j B_j}$ via a metric extension of the well-known Moreau's resolvent identity \cite[Example 3.9-(i)]{plc_vu}. 
\end{proof}
\begin{remark}
We always use the $s_j$-update in the form of \eqref{proto} instead of \eqref{proto_T}. The latter is presented solely to maintain formal consistency with the fixed-point iteration of $z_{i_q}^k$.
\end{remark}

To align with prevailing algorithmic structures \cite{ryu_lift,yura_lift,fjaa_pds}, an additional relaxation step is often incorporated into \eqref{proto_T} as
\be \label{proto_T_relax}
\left\lfloor \begin{aligned}
\begin{bmatrix}
\tilde{z}_{ i_q}^{k} \\   \tilde{s}_{j_q}^{k}  \end{bmatrix}
& = \begin{bmatrix}
z_{ i_q}^{k} \\ s_{j_q}^{k}  \end{bmatrix}
+ \begin{bmatrix}
 u_{ i_{q}}^{k+1}  -  u_{ i_{q-1} }^{k+1}   \\ 
 N_{j_q}^{-1} \big( v_{j_q}^{k+1} -
 J_{N_{j_q} B_{j_q}}  ( N_{j_q} s_{j_q}^k + v_{j_q}^{k+1} )  \big)
  \end{bmatrix}, && \textrm{(prediction)} \\
\begin{bmatrix}
z_{ i_q}^{k+1} \\ s_{j_q}^{k+1}  \end{bmatrix}
& = \begin{bmatrix}
z_{ i_q}^{k} \\ s_{j_q}^{k}  \end{bmatrix}
+  \begin{bmatrix}
\theta_{ i_{q}} & 0   \\ 
0 & \zeta_{j_q}   \end{bmatrix} 
  \begin{bmatrix}
\tilde{z}_{ i_q}^{k} - {z}_{ i_q}^{k}    \\ 
 \tilde{s}_{j_q}^{k} - {s}_{j_q}^{k}   
  \end{bmatrix},  && \textrm{(relaxation)}\\
\end{aligned} \right. 
\ee
where $\theta_{i_1,\ldots,i_{q}}$ and $\zeta_{j_q} $ are the relaxation parameters. The updating order of \eqref{proto_T_relax} is:
\[
\underbrace{  u_0^{k} \rightarrow s_{j_0}^{k}}
_\textrm{root}  \rightarrow
\underbrace{ u_{i_1}^{k} \rightarrow 
\big(s_{j_1}^{k}, z_{i_1}^{k} \big) }_\textrm{level-1}  
\rightarrow \cdots  \rightarrow
\underbrace{ u_{i_1,\ldots,i_q}^{k} \rightarrow 
 \big( s_{j_{q}}^{k}, z_{i_1,\ldots,i_q}^k \big) }_\textrm{level-$q$}
 \rightarrow \cdots   .
\]

Before proceeding with the fixed-point iteration, we adopt the following notational convention: {\it
the subscript $i\in [n-1], j\in [m]$ of $(u_0^k, u_i^k, z_i^k, s_j^k)$ or $(u_0^k, u_i^k, w_i^k, s_j^k)$ is omitted for brevity. That is, we denote $(u_0^k, u_i^k, z_i^k, s_j^k)_{i\in [n-1], j\in [m]} := (u_0^k, u_i^k, z_i^k, s_j^k)$ and $(u_0^k, u_i^k, w_i^k, s_j^k)_{i\in [n-1], j\in [m]} := (u_0^k, u_i^k, w_i^k, s_j^k)$.}

The relaxed scheme \eqref{proto_T_relax} gives a representation of the {\it apparent} fixed-point operator $\cT_{uzs, \theta,\zeta}$ parametrized by $(A_{i_q}, B_{j_q}^{-1}, D_{j_q}^{-1}, C_l, L_{j_q}, M_{i_q}, N_{j_q}, \theta_{i_q}, \zeta_{j_q})$,  defined on $\cH\times \cH^{n-1} \times \cH^{n-1} \times \prod_{j=1}^m \cK_j$. Under the aforementioned convention, \eqref{proto_T_relax} can be expressed as  $(u_0^{k+1}, u_i^{k+1}, z_i^{k+1}, s_j^{k+1}) = \cT_{uzs,\theta,\zeta} (u_0^{k}, u_i^{k}, z_i^{k}, s_j^{k}) $,  where $\cT_{uzs,\theta,\zeta} = \cI - \begin{bmatrix}
1 & 0&0&0 \\   0& 1&0&0 \\
0&0 & \theta_i & 0 \\ 
0&0 & 0 & \zeta_j \end{bmatrix} (\cI -\cT_{uzs})$,
 $\cT_{uzs}$ denotes the first prediction step of \eqref{proto_T_relax}:  
$(u_0^{k+1}, u_i^{k+1}, \allowbreak \ztilde_i^{k}, \stilde_j^{k}) = \cT_{uzs} (u_0^{k}, u_i^{k}, z_i^{k}, s_j^{k}) $. 

Observe that all the primal variables $u_i^{k+1}$ depend entirely on $z_i^k$ and $s_j^k$. As mere intermediate variables, they do not actually participate in the {\it reduced} fixed-point iteration:  $(z_i^{k+1}, s_j^{k+1}) = \cT_{zs,\theta,\zeta} (z_i^{k}, s_j^{k})$. Here the fixed-point operator $\cT_{zs}$ is  defined on a reduced dimensional space $\cH^{n-1} \times \prod_{j=1}^m \cK_j $, which is given as $\cT_{zs,\theta,\zeta} = \cI - \begin{bmatrix}
\theta_i & 0 \\ 0 & \zeta_j \end{bmatrix} (\cI -\cT_{zs})$, where $ \cT_{zs}$ denotes the prediction step: 
$(\ztilde_i^{k}, \stilde_j^{k}) = \cT_{zs} (z_i^{k}, s_j^{k})$. 

The following fact can be readily verified.
\begin{fact} \label{f_T}
$ \Fix \cT_{uzs,\theta,\zeta} = \Fix \cT_{uzs}$ and  $ \Fix \cT_{zs,\theta,\zeta} = \Fix \cT_{zs}$, $\forall \theta_{i_q}, \zeta_{j_q} \ne 0$.
\end{fact}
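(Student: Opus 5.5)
The argument rests on the relaxed operator being $\cT_{\theta} = \cI - \Theta(\cI - \cT)$, where $\Theta$ is a block-diagonal linear operator. Its diagonal blocks are either the identity (on the $u_0,u_i$ components, in the case of $\cT_{uzs,\theta,\zeta}$) or scalar multiples $\theta_{i_q}\cI$ and $\zeta_{j_q}\cI$. Under the standing hypothesis $\theta_{i_q},\zeta_{j_q}\neq 0$, every block of $\Theta$ is an invertible scalar multiple of the identity on its component space, so $\Theta$ itself is a bijective bounded linear operator with inverse $\mathrm{diag}(1,1,\theta_{i_q}^{-1}\cI,\zeta_{j_q}^{-1}\cI)$ (respectively $\mathrm{diag}(\theta_{i_q}^{-1}\cI,\zeta_{j_q}^{-1}\cI)$). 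In particular $\ker\Theta=\{0\}$.

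The chain of equivalences is then immediate. A point $x$ satisfies $x\in\Fix\cT_\theta$ iff $\cT_\theta x = x$. This holds iff $\Theta(x-\cT x)=0$. Because $\ker\Theta=\{0\}$, that is equivalent to $x-\cT x=0$, i.e.\ $x\in\Fix\cT$. I will apply this once with $\cT=\cT_{uzs}$ on $\cH\times\cH^{n-1}\times\cH^{n-1}\times\prod_j\cK_j$ and once with $\cT=\cT_{zs}$ on $\cH^{n-1}\times\prod_j\cK_j$. Both operators are single-valued and everywhere defined by the well-posedness Fact following Assumption \ref{assume_MN}, so the difference $\cI-\cT$ makes sense.

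One subtlety concerns $\cT_{uzs}$, where the relaxation acts on the $(u_0,u_i)$ components with coefficient $1$. For those components, the fixed-point condition means the outputs $u^{k+1}$ of the prediction coincide with the inputs. This condition is identical in both operators, so no extra argument is needed. I should also note that $\theta_{i_q}$ and $\zeta_{j_q}$ are scalars indexed per node or dual variable, so the componentwise cancellation is valid block by block.

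There is no real obstacle here. The only point requiring care is that the relaxation parameters act blockwise and are each nonzero, which guarantees injectivity of $\Theta$.
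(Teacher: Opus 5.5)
Your proposal is correct and is exactly the verification the paper intends; the paper gives no written proof and only calls the fact ``readily verified.'' Writing the relaxed operator as $\cI-\Theta(\cI-\cT)$, where $\Theta$ is block-diagonal with nonzero scalar blocks, $\Theta$ is injective, so $x$ is a fixed point of the relaxed operator if and only if $x=\cT x$, and this applies to both $\cT_{uzs}$ and $\cT_{zs}$.
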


\subsection{Fixed point encoding}
Proposition \ref{p_encoding} reveals the relationship between the fixed points of $\cT_{zs,\theta,\zeta}$ \eqref{proto_T_relax} and solutions to the problem \eqref{p}.
\begin{proposition} \label{p_encoding}
Given the fixed point operator $\cT_{zs,\theta,\zeta}$ (with $\theta_{i_q}, \zeta_{j_q} \ne 0$) defined by \eqref{proto_T_relax} , the following assertions hold:

{\rm (i)} If $(z_i^\star, s_j^\star) \in \Fix \cT_{zs,\theta,\zeta} $, then  $(u_0^\star,u_i^\star,z_i^\star, s_j^\star) \in \Fix \cT_{uzs,\theta,\zeta}$, where $u_0^\star = u_i^\star =  J_{A_0}^{\sum_{i_1=1}^{ |\cI_1|} M_{i_1}} 
\big( \sum_{i_1=1}^{ |\cI_1|} M_{i_1} z_{i_1}^\star -
 \sum_{j \in\cJ_{0 } } L_{j}^* s_{j}^\star  + a  \big)$, $\forall i\in [n-1]$, and $u_0^\star$ solves \eqref{p}.

{\rm (ii)} If $u^\star$ is a solution to \eqref{p}, then 
  $\exists (s_j^\star)_{j\in [m]} \in \prod_{j=1}^m \cK_j $,  
 $\exists (w_i^\star)_{i \in [ n-1] } \in \cH^{n-1}$, such that
 $(u_0^\star, u_i^\star, u_0^\star - w_i^\star, s_j^\star) \in \Fix \cT_{uzs,\theta,\zeta}$, where $u_0^\star = u_i^\star =u^\star$, and   $(u^\star - w_i^\star, s_j^\star) \in \Fix \cT_{zs,\theta,\zeta}$. 
\end{proposition}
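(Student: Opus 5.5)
We work throughout with the $(u,z,s)$-form \eqref{uzs} together with the $s_j$-steps of \eqref{proto}. By Fact \ref{f_T}, it suffices to treat the unrelaxed operators $\cT_{zs}$ and $\cT_{uzs}$, since relaxation with nonzero $\theta_{i_q},\zeta_{j_q}$ does not change fixed points.

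\emph{Part (i).} Let $(z_i^\star,s_j^\star)\in\Fix\cT_{zs}$. Let $u_{i_q}^\star$ be the intermediate primal variables generated by one sweep of \eqref{uzs} from $(z^\star,s^\star)$.

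\begin{itemize}
\item \emph{Consensus.} The $z$-update gives $u_{i_q}^\star=u_{i_{q-1}}^\star$ for every edge. Since the tree is connected, all $u_i^\star$ coincide with $u_0^\star$. The root inclusion then shows that $u_0^\star$ is exactly the stated metric resolvent $J_{A_0}^{\sum M_{i_1}}(\cdot)$. Because the primal variables are determined by $(z,s)$, we get $(u_0^\star,u_i^\star,z_i^\star,s_j^\star)\in\Fix\cT_{uzs}$.
\item \emph{Dual steps.} With $s^{k+1}=s^k=s^\star$, the $N_j$-terms and all correction terms $L_j^*(s_j^{k+1}-s_j^k)$ vanish. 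The $s_j$-step then reads $0\in B_j^{-1}s_j^\star+D_j^{-1}s_j^\star-L_ju^\star+b_j$, which is the second line of \eqref{pd}.
\item \emph{Primal steps.} Set $w_{i_q}^\star:=u^\star-z_{i_q}^\star$. Substituting $u_{i_q}^\star=u_{i_{q-1}}^\star=u^\star$ into the node inclusions gives
\[
0\in A_{i_q}u^\star - M_{i_q}w_{i_q}^\star+\sum_{i_{q+1}}M_{i_{q+1}}w_{i_{q+1}}^\star+\sum_{l\in\cP_{i_q}}C_lu^\star+\sum_{j\in\cJ_{i_q}}L_j^*s_j^\star .
\]
The root inclusion has the same form, with no parent term and with the extra $-a$.
\item \emph{Summation.} Sum these inclusions over all nodes, choosing one element from each set. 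Each edge term $M_{i_q}w_{i_q}^\star$ appears once with sign $+$, at the parent, and once with sign $-$, at the child, so all edge terms telescope to zero. The sets $\{\cP_{i_q}\}$ and $\{\cJ_{i_q}\}$ partition $[p]$ and $[m]$, so every $C_l$ and every $L_j^*s_j^\star$ appears exactly once. The result is the first line of \eqref{pd}.
\end{itemize}
By Lemma \ref{l_pd}(i), which holds in both directions, $u_0^\star$ solves \eqref{p}.

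\emph{Part (ii).} Let $u^\star$ solve \eqref{p}. Lemma \ref{l_pd}(i) provides $s_j^\star$ satisfying \eqref{pd}; fix selections $a_i\in A_iu^\star$ realizing the first line of \eqref{pd}. The task is to construct edge multipliers $w_{i_q}^\star$ so that every node inclusion above holds. For each node define
\[
g_{i_q}:=a_{i_q}+\sum_{l\in\cP_{i_q}}C_lu^\star+\sum_{j\in\cJ_{i_q}}L_j^*s_j^\star ,
\]
and subtract $a$ at the root. Then the requirement at each non-root node is
\[
M_{i_q}w_{i_q}^\star=g_{i_q}+\sum_{\text{children}}M_{i_{q+1}}w_{i_{q+1}}^\star .
\]
Equivalently, $M_{i_q}w_{i_q}^\star$ equals the sum of $g$ over the subtree rooted at $u_{i_q}$. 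We define $w^\star$ by this formula, processing leaves first, and invert $M_{i_q}$ using Assumption \ref{assume_MN}. The root condition then reduces to the statement that the sum of $g$ over the whole tree, minus $a$, equals zero, which is exactly the first line of \eqref{pd}.

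\emph{Verification.} With $z_i^\star:=u^\star-w_i^\star$, all $u_{i_q}^\star=u^\star$, and $s^\star$ fixed:
\begin{itemize}
\item the $z$-update returns $z^\star$, because the consensus differences vanish;
\item the $s$-update returns $s^\star$, by the second line of \eqref{pd} and uniqueness of the resolvent of $B_j^{-1}$ in the metric $N_j$.
\end{itemize}
One point needs care: the primal steps must reproduce exactly $u^\star$. This follows from single-valuedness of the warped resolvents (the Fact on well-posedness), since $u^\star$ satisfies each node inclusion with the given data. Hence the constructed point lies in $\Fix\cT_{uzs}$, and its $(z,s)$-part lies in $\Fix\cT_{zs}$.

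The main obstacle is the bookkeeping of the subtree sums. The orientation of the edge terms has to be checked against \eqref{uzs} so that the telescoping in (i) and the recursive construction in (ii) carry consistent signs.
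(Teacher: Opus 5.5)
Your proposal is correct and follows essentially the same route as the paper. In part (i) you use consensus from the $z$-step, pass to the multipliers $w=u-z$ as in \eqref{www}, and telescope the edge terms to recover \eqref{pd}; in part (ii) you solve for the edge multipliers from the leaves to the root. Your explicit subtree-sum formula for $M_{i_q}w_{i_q}^\star$, your check of the root equation, and your appeal to single-valuedness of the resolvents (to confirm that one sweep reproduces $u^\star$ and $s^\star$) fill in steps the paper leaves implicit.
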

\begin{proof}
(i) We develop $(z_i^\star, s_j^\star) \in \Fix \cT_{zs,\theta,\zeta} \Longleftrightarrow (z_i^\star, s_j^\star) \in \Fix \cT_{zs}$ by Fact \ref{f_T} $\Longleftrightarrow (z_i^\star, s_j^\star)$ is the fixed point of \eqref{uzs} by Proposition \ref{p_T}. 
The last line of \eqref{uzs} implies $u_0^\star = u_i^\star$, $\forall i \in [n-1]$, which is $u_0^\star =  J_{A_0}^{\sum_{i_1=1}^{ |\cI_1|} M_{i_1}} 
\big( \sum_{i_1=1}^{ |\cI_1|} M_{i_1} z_{i_1}^\star -
 \sum_{j \in\cJ_{0 } } L_{j}^* s_{j}^\star  + a  \big)$ by the first line of  \eqref{uzs}.  Then, the apparent fixed-point operator $\cT_{uzs}$ and $\cT_{uzs,\theta,\zeta}$ is recovered: 
  $(u_0^\star, u_i^\star, z_i^\star, s_j^\star) \in \Fix \cT_{uzs,\theta,\zeta}$, where $u_0^\star = u_i^\star =u^\star$.
 
Furthermore, by equivalence between \eqref{proto} and \eqref{uzs} via the link of $z_{i_q}^k = u_{i_q}^k - w_{i_q}^k$ (cf. Proposition \ref{p_T}), we conclude that  $(u_0^\star, u_{i_q}^\star, 
u_{i_q}^\star - z_{i_q}^\star, s_j^\star)_{q\ge 1, j\in [m]}$ (with $u_0^\star =u_{i_q}^\star$, $\forall q\ge 1$) is a fixed point of \eqref{proto}, i.e., 
{\small
\be \label{www}
\left\lfloor \begin{aligned}
0 & \in   A_{0 } u_{ 0 }^\star  +  \sum_{i_{1} =1}^{ |\cI_{1} | }
M_{i_1 }   w_{ i_1 }^\star   
+  \sum_{j \in\cJ_{0 } } L_{j}^* s_{j}^\star  -a ; \\
0 &  \in  A_{ i_{q} } u_{ 0}^\star  - 
M_{ i_{q} } w_{  i_{q} }^\star +  \sum_{i_{q+1} =1}^{ |\cI_{q+1,i_1,\ldots, i_q} | } M_{ i_{q+1} }  w_{  i_{q+1} }^\star 
+  \sum_{l\in \cP_{i_1,\ldots,i_q}} C_l u_{0}^\star 
 +  \sum_{j \in\cJ_{i_1,\ldots, i_{q} } } L_{j}^* s_{j}^\star ,\ 
 \forall q \ge 1 ; \\
0 & \in   \big( B_{j }^{-1} + D_{j }^{-1} \big) s_{j } ^\star 
- L_{j } u_0^\star +b_{j},\quad  \forall j \in [m]. \\
\end{aligned} \right. 
\ee}%
Substituting $s_j^\star \in (B_j\square D_j) (L_{j } u_0^\star -b_{j})$ into and summing up all the $A_i$-inclusions yields that $u_0^\star$ solves \eqref{p}, as all the terms containing  $w_{i_q}^\star$ cancel out.

(ii) If $u^\star$ is a solution to \eqref{p}, then by  Lemma \ref{l_pd}-(ii),  $\exists (u_i^\star)_{i=0,\ldots,n-1} \in \cH^n$,
 $\exists (y_i^\star)_{i \in [n-1] } \in \cH^{n-1}$,
   $\exists (s_j^\star)_{j\in [m]} \in \prod_{j=1}^m \cK_j $,  
such that      $u_0^\star = u_i^\star (:=u^\star)$ ($\forall i\in [n-1]$), 
$ 0  \in A_i u_0^\star + y_i^\star + \sum_{l \in \cP_i}  C_l u_0^\star 
+ \sum_{j \in \cJ_i}  L_j^* s_j^\star$ , $\forall i\in [n-1]$.  
Comparing this inclusion with the middle line of \eqref{www}, we obtain the following  linear system of equations of order $(n-1)$:
$ 0  =  y_i^\star +  M_{ i_{q} } w_{  i_{q} }^\star - 
 \sum\nolimits_{i_{q+1} =1}^{ |\cI_{q+1,i_1,\ldots, i_q} | } M_{ i_{q+1} }  w_{  i_{q+1} }^\star$,  
$ \forall i\in \cI_{q,i_1,\ldots,i_{q-1}}$ and $\forall q\ge 1 $.
Solving the above linear system by induction from the leaves to the root, we observe that $w_i^\star$ is uniquely determined by $ (y_i^\star)_{i \in [n-1] }$, since each $M_{i_q}$ is invertible under Assumption \ref{assume_MN}. Since \eqref{www} is the fixed point version of \eqref{proto},   $(u_0^\star, u_{i_q}^\star, w_{i_q}^\star, s_j^\star)_{q\ge 1, j\in [m]}$ is also a fixed point of the prototype \eqref{proto}. Finally, the desired result is obtained by Proposition \ref{p_T}.
\end{proof}

From Proposition \ref{p_encoding} follows Corollary \ref{c_existence}, which  plays a pivotal role in the subsequent convergence analysis, as demonstrated in Proposition \ref{p_ine} and Theorem \ref{t_zs}. 
\begin{corollary} \label{c_existence}
$\Fix \cT_{uzs,\theta,\zeta}\ne\varnothing$ and
$\Fix \cT_{zs,\theta,\zeta}\ne\varnothing$.
\end{corollary}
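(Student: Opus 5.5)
The corollary follows by combining the standing assumption that the solution set of \eqref{p} is non-empty with Proposition \ref{p_encoding}-(ii). First, I pick any solution $u^\star$ of \eqref{p}, which exists by the blanket assumption stated after \eqref{p}. Applying Proposition \ref{p_encoding}-(ii) with any admissible relaxation parameters $\theta_{i_q},\zeta_{j_q}\ne 0$ then produces dual elements $(s_j^\star)_{j\in[m]}$ and multipliers $(w_i^\star)_{i\in[n-1]}$ such that $(u^\star,u^\star,u^\star-w_i^\star,s_j^\star)\in\Fix\cT_{uzs,\theta,\zeta}$ and $(u^\star-w_i^\star,s_j^\star)\in\Fix\cT_{zs,\theta,\zeta}$. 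This exhibits explicit elements of both fixed-point sets, so neither set is empty.

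The only point that deserves checking is that the existence claims inside Proposition \ref{p_encoding}-(ii) are genuinely constructive under Assumption \ref{assume_MN}. The dual variables $s_j^\star$ come from Lemma \ref{l_pd}-(i): since $u^\star$ solves \eqref{p}, one may take $s_j^\star\in(B_j\square D_j)(L_ju^\star-b_j)$ chosen so that the first line of \eqref{pd} holds. The split terms $y_i^\star$ then come from Lemma \ref{l_pd}-(ii).

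The main step, which I expect to be the only nontrivial one, is solving the triangular linear system
\[
y_i^\star+M_{i_q}w_{i_q}^\star-\sum_{i_{q+1}}M_{i_{q+1}}w_{i_{q+1}}^\star=0 .
\]
I would process it from the leaves upward, inverting each $M_{i_q}$; these are invertible because they are uniformly positive definite. Consistency at the root requires that the first line of \eqref{www} holds. This follows because the $w$-terms telescope over the tree, and the inclusions summed over all nodes reproduce the first line of \eqref{pd} with the sum $\sum_i y_i^\star$ matching $A_0u^\star+\cdots-a$ through the choice $w_0^\star=-\sum_i w_i^\star$ made in Lemma \ref{l_pd}. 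Since Proposition \ref{p_encoding} already establishes all of this, the corollary needs no argument beyond invoking (ii).

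Finally, Fact \ref{f_T} shows the conclusion does not depend on the particular nonzero $\theta_{i_q},\zeta_{j_q}$. Hence $\Fix\cT_{uzs}\ne\varnothing$ and $\Fix\cT_{zs}\ne\varnothing$ as well, which is the form needed later in Proposition \ref{p_ine} and Theorem \ref{t_zs}.
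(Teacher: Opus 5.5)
Your proposal is correct and matches the paper's proof, which likewise combines the standing assumption that the solution set of \eqref{p} is non-empty with Proposition \ref{p_encoding}-(ii) to exhibit explicit elements of both fixed-point sets. Your extra check is also correct: solving the linear system from the leaves to the root gives $\sum_{i_1} M_{i_1} w_{i_1}^\star = -\sum_{i=1}^{n-1} y_i^\star$, so the root line of \eqref{www} agrees with the root line of \eqref{pds}, which is a step the paper leaves implicit.
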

\begin{proof}
It has been assumed throughout this work that the solution set of \eqref{p} is non-empty. This is equivalent to the existence of the fixed points of $\Fix \cT_{uzs,\theta,\zeta}$ and $\Fix \cT_{zs,\theta,\zeta}$ by Proposition \ref{p_encoding}.
\end{proof}

Finally, we conclude
\begin{theorem} \label{t_encoding}
Given the problem \eqref{p}, the pair $(\cT_{zs,\theta,\zeta}, \cS)$

{\rm (i)} forms a fixed point encoding, where the fixed point operator $\cT_{zs,\theta,\zeta}$ is defined as  \eqref{proto_T_relax}, the solution mapping is $\cS = 
J_{A_0}^{\sum_{i_1=1}^{ |\cI_1|} M_{i_1}} 
\big( \sum_{i_1=1}^{ |\cI_1|} M_{i_1} z_{i_1}^\star -
 \sum_{j \in\cJ_{0 } } L_{j}^* s_{j}^\star  + a  \big)$; 
 
{\rm (ii)} is a frugal primal-dual parametrized resolvent splitting with $(n-1,m)$-fold minimal lifting.
\end{theorem}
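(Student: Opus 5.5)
Part (i) will be read off from Proposition \ref{p_encoding}, using the standard notion of fixed-point encoding from \cite{ryu_lift,fjaa_pds}. That notion asks for two things. First, a solution of \eqref{p} exists if and only if $\Fix \cT_{zs,\theta,\zeta}\neq\varnothing$. Second, the solution map $\cS$ sends every fixed point to a solution. Proposition \ref{p_encoding}-(ii) gives the implication ``solution exists $\Rightarrow$ fixed point exists'' through the explicit fixed point $(u^\star-w_i^\star,s_j^\star)$; see also Corollary \ref{c_existence}. Proposition \ref{p_encoding}-(i) gives the converse. For any $(z_i^\star,s_j^\star)\in\Fix\cT_{zs,\theta,\zeta}$, the point $u_0^\star=J_{A_0}^{\sum_{i_1} M_{i_1}}\big(\sum_{i_1} M_{i_1} z_{i_1}^\star-\sum_{j\in\cJ_0}L_j^*s_j^\star+a\big)$ solves \eqref{p}, and this is exactly $\cS(z^\star,s^\star)$. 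I will also check that $\cS$ is single-valued and everywhere defined. By Assumption \ref{assume_MN}, the metric $\sum_{i_1}M_{i_1}$ is uniformly positive definite, so the metric resolvent is well defined by Minty's theorem, as in the Fact following Assumption \ref{assume_MN}.

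For (ii) I will verify three properties in turn.

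\emph{Parametrized resolvent splitting.} By \eqref{uzs} and \eqref{proto_T_relax}, one application of $\cT_{zs,\theta,\zeta}$ uses the following operations only:
\begin{itemize}
\item metric resolvents of $A_{i_q}$, with metric $M_{i_q}+\sum_{i_{q+1}}M_{i_{q+1}}$;
\item metric resolvents of $B_j^{-1}$, with metric $N_j$;
\item forward evaluations of $C_l$ and $D_j^{-1}$;
\item applications of $L_j$, $L_j^*$ and the metrics;
\item linear combinations of these quantities.
\end{itemize}

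\emph{Frugality.} I will count evaluations along the update order displayed after \eqref{proto_T_relax}. Each $A_i$ is hit exactly once, namely at its node $u_i$. Each $B_j^{-1}$ is hit once, in the $s_j$-step attached to its unique node. Each $C_l$ is evaluated once, because the protocol loads it on a single node, at that node's parent. Each $D_j^{-1}$ is evaluated once, at $s_j^k$. The correction terms $L_j^*(s_j^{k+1}-s_j^k)$ reuse the already computed $s_j^{k+1}$, so they add no new resolvent calls.

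\emph{Lifting.} The reduced iteration acts on $(z_i,s_j)\in\cH^{n-1}\times\prod_{j=1}^m\cK_j$, which is an $(n-1,m)$-fold lifting. Its minimality follows from the lower bound \cite[Theorem 5]{fjaa_pds}: any frugal resolvent splitting for \eqref{p_simple} with $n\ge2$ operators $A_i$ and $m$ compositions needs at least $(n-1,m)$-fold lifting. This applies here because \eqref{p_simple} is the special case $a=0$, $b_j=0$, $D_j=\cN_{\{0\}}$, $p=0$ of \eqref{p}. The lower bound for the general problem \eqref{p} is then inherited from this special case, and our construction attains it.

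The main obstacle is making the lifting claim rigorous. The apparent operator $\cT_{uzs}$ lives on a larger space, so I must show that $u_0^{k+1}$ and $u_i^{k+1}$ are genuinely recomputed from $(z^k,s^k)$ alone and carry no memory. I will argue level by level down the tree using \eqref{uzs}.
\begin{enumerate}
\item The root update $u_0^{k+1}$ depends only on $z_{i_1}^k$ and $s_{j}^k$ for $j\in\cJ_0$.
\item Each $s_{j_q}^{k+1}$ depends on $u_{i_q}^{k+1}$ and $s_{j_q}^k$.
\item Each $u_{i_q}^{k+1}$ depends on $u_{i_{q-1}}^{k+1}$, the terms $z^k$ and $s^k$, and the already updated $s^{k+1}$ entering its correction term. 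Those $s^{k+1}$ belong to the parent and are therefore available.
\end{enumerate}
By induction on the level, no $u^k$ enters the recursion, so $\cT_{zs,\theta,\zeta}$ is a well-defined map on $\cH^{n-1}\times\prod_j\cK_j$.
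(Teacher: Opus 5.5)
Your proposal follows the paper's proof. Part (i) is read off from Proposition \ref{p_encoding} and the definition of a fixed-point encoding. For (ii), the upper bound on lifting comes from the active variables $(z_i,s_j)$, and the lower bound is inherited from \cite[Theorem 5]{fjaa_pds} through the subclass $p=0$, $D_j=\cN_{\{0\}}$. Your level-by-level check that no $u^k$ is carried over just expands the paper's remark that the active variables are clearly $(z_i,s_j)$.

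The one point the paper makes that you leave implicit concerns frugality. Frugality in \cite[Definition 11]{fjaa_pds} is phrased via resolvents of $B_j$, so you should note that one $N_j$-resolvent evaluation of $B_j^{-1}$ is one evaluation of $J_{N_jB_j}$ by the metric Moreau identity, as already written in \eqref{proto_T}.
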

\begin{proof}
(i) Clear from Proposition \ref{p_encoding} and by definition of fixed point encoding \cite{ryu_lift}, \cite[Definition 8]{fjaa_pds}, \cite[Definition 2.3]{yura_lift}. 

(ii) We say \eqref{uzs} and \eqref{proto_T_relax} are frugal by \cite[Definition 11]{fjaa_pds}, in a sense that the evaluation of parametrized resolvent of $B_j^{-1}$ is essentially equivalent to that of $B_j$ due to Moreau's decomposition identity \cite[Example 3.9-(i)]{plc_vu}. 

Regarding the lifting number, the active variables of \eqref{proto_T_relax} are clearly $(z_i,s_j)$, implying that the minimal number of splitting method for solving \eqref{p} is at most $(n-1,m)$-fold \cite[Definitions 9 and 10]{fjaa_pds}. On the other hand, within the class of frugal parametrized resolvent splitting considered in \cite{fjaa_pds}, any splitting capable of solving all intances of the class \eqref{p} requires at least $(n-1,m)$-fold lifting.  Indeed, this class contains the subclass $p=0$ and $D_j=\cN_{\{0\}}$, for which the lower bound of \cite[Theorem 5]{fjaa_pds} applies. 
Consequently, for all instances of the general problem \eqref{p}, the minimal lifting number should be bounded below by the $(n-1,m)$-fold. Then the result follows. 
Here, we emphasize that the minimal lifting number always holds for any tree structure with arbitrary dual assignment and cocoercive loading, provided that the graph design adheres to the protocol mentioned above.
\end{proof}

\section{Convergence analysis} \label{sec_con}
Our analysis heavily relies on the reformulation of \eqref{proto_T_relax} in a product Hilbert space, stated in Lemma \ref{l_ppm}. Theorem \ref{t_zs} shows the weak convergence of $\big\{ (z_i^k,s_j^k) \big\}_{k\in\N}$ based on the somewhat averagedness of $\cT_{zs,\theta,\zeta}$ (cf. Proposition \ref{p_ine}) and  demiclosedness of $\cI - \cT_{zs,\theta,\zeta}$ (cf. Corollary \ref{c_demi}). Proposition \ref{p_rate} further shows the $o(1/k)$-rate of asymptotic regularity. Note that many detailed long proofs are postponed to the Appendices.

\subsection{Basic ingredients}
This part presents several basic results of the relaxed scheme \eqref{proto_T_relax}, that will be useful for convergence analysis. 

First, several facts of cocoercive operator are necessary. 
\begin{fact} \label{f_coco}
Let  $C_l: \cH\mapsto \cH$ for $l=1,\ldots,p$ (resp. $C: \cH\mapsto \cH$) be $\beta_l$ (resp. $\beta$)-cocoercive. Then the following assertions hold:

{\rm (i)} $\sum_{l=1}^p C_l$ is $\frac{1}{\sum_{l=1}^p \beta_l^{-1}}$-cocoercive;

{\rm (ii)} $\langle Ca-Cb | c-d \rangle \ge  - \frac{1}{4\beta} 
\|   (c- a) - (d -b)  \|^2$, $\forall a,b,c,d\in \cH$.
\end{fact}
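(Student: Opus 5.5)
For (i), the plan is to use the Baillon--Haddad-free characterization of cocoercivity: an operator $C$ is $\beta$-cocoercive iff $\langle Cx-Cy | x-y\rangle \ge \beta\|Cx-Cy\|^2$ for all $x,y$. Setting $d_l := C_l x - C_l y$ and $\beta := (\sum_{l=1}^p \beta_l^{-1})^{-1}$, summing the individual inequalities gives
\[
\Big\langle \sum_{l=1}^p C_l x - \sum_{l=1}^p C_l y \Big| x-y\Big\rangle \ge \sum_{l=1}^p \beta_l \|d_l\|^2 .
\]
I then need to bound $\sum_l \beta_l\|d_l\|^2$ from below by $\beta\|\sum_l d_l\|^2$. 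Write $d_l = \beta_l^{-1/2}\cdot \beta_l^{1/2} d_l$. The triangle inequality followed by Cauchy--Schwarz in $\R^p$ (equivalently, convexity of $\|\cdot\|^2$ with weights $\beta_l^{-1}/\sum_{l'}\beta_{l'}^{-1}$) gives
\[
\Big\|\sum_l d_l\Big\|^2 \le \Big(\sum_l \beta_l^{-1}\Big)\Big(\sum_l \beta_l\|d_l\|^2\Big),
\]
which is exactly the claim. This step is routine; the only care needed is choosing the weights so that the Jensen form applies cleanly.

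For (ii), I split the pairing as
\[
\langle Ca-Cb| c-d\rangle = \langle Ca-Cb| a-b\rangle + \langle Ca-Cb| (c-a)-(d-b)\rangle .
\]
I bound the first term below by $\beta\|Ca-Cb\|^2$ using cocoercivity. For the second term, Young's inequality gives $\langle x|y\rangle \ge -\beta\|x\|^2 - \frac{1}{4\beta}\|y\|^2$ with $x=Ca-Cb$ and $y=(c-a)-(d-b)$. Adding the two bounds, the $\beta\|Ca-Cb\|^2$ terms cancel and leave exactly $-\frac{1}{4\beta}\|(c-a)-(d-b)\|^2$.

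Neither step poses a real obstacle. The only point where I would double-check constants is matching the Young parameter to the cocoercivity constant in (ii), so that the cancellation is exact.
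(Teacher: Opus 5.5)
Your proposal is correct and follows the paper's route. Part (ii) is exactly the paper's split $\langle Ca-Cb| a-b\rangle + \langle Ca-Cb| (c-a)-(d-b)\rangle$, followed by cocoercivity and Young's inequality with the matching constant $\beta$. For part (i) the paper only cites Bauschke--Combettes, Proposition 4.12, with the linear operators set to the identity, and your summation plus weighted-convexity (Cauchy--Schwarz) argument is a self-contained proof of that cited special case.
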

\begin{proof}
(i) Taking $L=I$ in \cite[Proposition 4.12]{plc_book} completes the proof.

(ii) We derive $\langle Ca-Cb | c-d \rangle   = \langle Ca-Cb | a-b  \rangle + \langle Ca-Cb |  c- a -d +b \rangle
\ge \beta \| Ca-Cb \|^2 - \beta \| Ca-Cb \|^2 - \frac{1}{4\beta}   \|   (c- a) - (d -b)  \|^2  =  - \frac{1}{4\beta} 
\|   (c- a) - (d -b)  \|^2$.
\end{proof}

Lemma \ref{l_ppm} reformulates the prediction step of \eqref{proto_T_relax} in a product Hilbert space, which facilitates the subsequent convergence analysis. We adopt the following notational convention.
\begin{remark} \label{rmk_notation2}
Let $(u_{i_q}, z_{i_q}, s_{j})$ with specific index $i_q$ and the corresponding $j\in \cJ_{i_1,\ldots,i_q}$ be a segment extracted from the full vector $(u_0, u_{i_q}, z_{i_q}, s_{j})_{i\in[n-1], j\in [m]}$.  Applying a block operator $\cA$ to this segment, we denote the elements of $\cA$ as:
{\small \[
\begin{bmatrix}
\cA_{u_{i_{q-1}} \rightarrow u_{i_{q-1}} } & 
\cA_{u_{i_{q-1}} \rightarrow u_{i_{q}} } & 
* & * & * & *  \\
\cA_{u_{i_{q}} \rightarrow u_{i_{q-1}} } & 
\cA_{u_{i_{q}} \rightarrow u_{i_{q}} } & 
\cA_{u_{i_{q}} \rightarrow u_{i_{q+1}} } & 
\cA_{u_{i_{q}} \rightarrow z_{i_{q}} } & 
\cA_{u_{i_{q}} \rightarrow z_{i_{q+1}} } & 
\cA_{u_{i_{q}} \rightarrow s_{j} }   \\
* & *  & \cA_{u_{i_{q+1}} \rightarrow u_{i_{q+1}} } & 
* & \cA_{u_{i_{q+1}} \rightarrow z_{i_{q+1}} } & *  \\
* & \cA_{z_{i_{q}} \rightarrow u_{i_{q}} } & *  & 
\cA_{z_{i_{q}} \rightarrow z_{i_{q}} } &  * & *  \\
* & \cA_{z_{i_{q+1}} \rightarrow u_{i_{q}} } 
& \cA_{z_{i_{q+1}} \rightarrow u_{i_{q+1}} }   & 
  * & \cA_{z_{i_{q+1}} \rightarrow z_{i_{q+1}} } & *  \\
* &   \cA_{s_j \rightarrow u_{i_{q}} } & * & * & *  
&  \cA_{s_j \rightarrow s_{j} } 
   \end{bmatrix}
 \begin{bmatrix}
u_{i_{q-1}} \\ u_{i_{q}} \\ u_{i_{q+1}} \\ z_{i_{q}} \\
z_{i_{q+1}} \\ s_j  \end{bmatrix}.
\]}%
Here the subscripts on the left and right of the arrow $\rightarrow$ denote the row and column indices, respectively. For example, the elements in the second row of $\cA$ are $\cA_{u_{i_q}\rightarrow ?}$, while the fourth column takes the form $\cA_{? }\rightarrow  z_{i_q}$. The symbol * in the above matrix serves as a placeholder for omitted entries. 
\end{remark} 

\begin{lemma} \label{l_ppm}
The prediction step of \eqref{proto_T_relax} is recast as $0  \in \cA \tilde{x}^{k}  + \cQ (\tilde{x}^{k} - x^k) + r^k$, defined in a product Hilbert space $\cH\times  \cH^{n-1} \times \cH^{n-1} \times \prod_{j=1}^m \cK_j$, where 
\begin{itemize}
\item $x = (u_0,u_i,z_i,s_j) \in 
\cH\times  \cH^{n-1} \times \cH^{n-1} \times \prod_{j=1}^m \cK_j$.

\item  The block operator $\cA$ is defined as follows (cf. Remarks \ref{rmk_notation} and \ref{rmk_notation2}):
$\cA_{u_{0}\rightarrow u_{0}}: \cH\mapsto 2^\cH: u_0\mapsto  A_{0} u_0 -a$, 
$\cA_{u_{i_q}\rightarrow u_{i_q}} = A_{i_q}$, $\forall q\ge 1$;
$\cA_{u_{i_{q}}\rightarrow u_{i_{q+1}}} = M_{i_{q+1}}$, 
$\cA_{u_{i_{q+1}}\rightarrow u_{i_{q}}} = -M_{i_{q+1}}$, $\forall q\ge 0$,  $\forall i_{q+1} \in \cI_{q+1,i_1,\ldots,i_{q}}$;
$\cA_{u_{i_{q}}\rightarrow z_{i_{q}}} = M_{i_{q}}$, 
$\cA_{z_{i_{q}}\rightarrow u_{i_{q}}} = - M_{i_{q}}$,  $\forall q\ge 1$; 
$\cA_{u_{i_{q}}\rightarrow z_{i_{q+1}}} = - M_{i_{q+1}}$, 
$\cA_{z_{i_{q+1}} \rightarrow u_{i_{q}} } =  M_{i_{q+1}}$,  $\forall q\ge 0$; 
$\cA_{u_{i_{q}}\rightarrow s_{j} } = L_{j}^*$, 
$\cA_{ s_{j}\rightarrow u_{i_{q}}} = -L_{j}$,   $\forall q\ge 0$,  $\forall j\in \cJ_{i_1,\ldots,i_q}$;
$\cA_{s_{j}\rightarrow s_{j} } = B_{j}^{-1}$, $\forall j\in [m]$.
All other unspecified entries are zero.

\item The block preconditioner $\cQ$ is given as: $\cQ_{z_{i_{q}}\rightarrow z_{i_q}} = M_{i_q}$,  $\forall q\ge 1$;  $\cQ_{s_{j}\rightarrow s_{j} } = N_{j}$, $\forall j\in [m]$. All of other entries unspecified above are zeros. 
\item The residual term $r^{k}$ is defined as $r^{k} = \big( r_{u_0}^{k}, r_{u_{i}}^{k}, r_{z_{i}}^{k}, r_{s_j}^{k} \big)_{i\in [n-1],   j\in [m]}$, where $ r_{z_{i}}^{k} = 0$, $\forall i\in [n-1]$; $r_{s_j}^{k} = D_{j}^{-1} s_{j}^{k} + b_{j}$, $\forall j\in [m]$; and $ r_{u_{i}}^{k} $ is rearranged by the tree nodes $r_{u_{i_q}}^k$, which is given as $\forall i \in \cI_{q, i_1,\ldots,i_{q-1}}$ and  $\forall q\ge 0$ (following Remarks \ref{rmk_notation} and \ref{rmk_notation2}):
\[
r_{u_{i_q}}^{k} = 
\sum_{j\in \cJ_{i_1,\ldots,i_{q}}}  L_{j}^* ( s_{j}^{k} -  
\stilde_{j}^{k} )
 + \sum_{j\in \cJ_{c,i_1,\ldots,i_{q}}} 
 L_{j}^* ( \stilde_{j}^{k} -  s_{j}^{k} )     
 + \sum_{l\in \cP_{i_1,\ldots,i_{q}}} C_l  u_{ i_{q-1}}^{k+1} . 
\]
\end{itemize}
\end{lemma}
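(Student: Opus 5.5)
The plan is to verify the inclusion $0\in\cA\tilde x^k+\cQ(\tilde x^k-x^k)+r^k$ block by block. Set $\tilde x^k=(u_0^{k+1},u_i^{k+1},\ztilde_i^k,\stilde_j^k)$ and $x^k=(u_0^k,u_i^k,z_i^k,s_j^k)$. Each block row should then coincide with one line of the $(u,z,s)$-scheme \eqref{uzs}, or with the $s_j$-step of \eqref{proto}, once the prediction outputs are written in terms of the updated variables. The $u$-components of $\cQ$ vanish, so the $u$-rows only need $\cA$ and $r^k$. By Proposition \ref{p_T}, the prediction step is exactly \eqref{uzs} with $(z^{k+1},s^{k+1})$ replaced by $(\ztilde^k,\stilde^k)$, and I will use that form throughout.

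First I handle the $z_{i_q}$-rows. The only nonzero entries are $\cA_{z_{i_q}\to u_{i_q}}=-M_{i_q}$ and $\cA_{z_{i_q}\to u_{i_{q-1}}}=M_{i_q}$, the latter coming from the rule $\cA_{z_{i_{q+1}}\to u_{i_q}}=M_{i_{q+1}}$ shifted by one level. Together with $\cQ_{z\to z}=M_{i_q}$ and $r_z=0$, the row reads
\[
M_{i_q}\big(\ztilde_{i_q}^k-z_{i_q}^k-u_{i_q}^{k+1}+u_{i_{q-1}}^{k+1}\big)=0 ,
\]
which is the last line of \eqref{uzs} multiplied by $M_{i_q}$. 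Next come the $s_j$-rows with $j\in\cJ_{i_1,\ldots,i_q}$. Here the row gives $B_j^{-1}\stilde_j^k-L_ju_{i_q}^{k+1}+N_j(\stilde_j^k-s_j^k)+D_j^{-1}s_j^k+b_j$, which is precisely the dual line of \eqref{proto}.

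The main step is the $u_{i_q}$-row, and it is where the signs and indices must be checked carefully. The row of $\cA$ contributes
\[
A_{i_q}u_{i_q}^{k+1}-M_{i_q}u_{i_{q-1}}^{k+1}+M_{i_q}\ztilde_{i_q}^k+\sum_{i_{q+1}}M_{i_{q+1}}\big(u_{i_{q+1}}^{k+1}-\ztilde_{i_{q+1}}^k\big)+\sum_{j\in\cJ_{i_1,\ldots,i_q}}L_j^*\stilde_j^k .
\]
Into this I substitute $\ztilde_{i_q}^k=z_{i_q}^k+u_{i_q}^{k+1}-u_{i_{q-1}}^{k+1}$, which turns the parent term into $M_{i_q}(u_{i_q}^{k+1}+z_{i_q}^k-2u_{i_{q-1}}^{k+1})$. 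I also substitute $\ztilde_{i_{q+1}}^k=z_{i_{q+1}}^k+u_{i_{q+1}}^{k+1}-u_{i_q}^{k+1}$, which turns the child terms into $M_{i_{q+1}}(u_{i_q}^{k+1}-z_{i_{q+1}}^k)$. In particular the term $u_{i_{q+1}}^{k+1}$ cancels, so causality is respected.

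It then remains to add $r_{u_{i_q}}^k$. Its first sum changes $\sum_{j\in\cJ_{i_1,\ldots,i_q}}L_j^*\stilde_j^k$ into the same sum with $s_j^k$. The correction sum and the cocoercive sum reproduce the remaining terms of the level-$q$ line of \eqref{uzs}, with $s_j^{k+1}$ read as $\stilde_j^k$. For the root I use the boundary conventions of Remark \ref{rmk_notation}: $\cP_0=\cJ_{c,0}=\varnothing$, there is no parent, and the $-a$ is absorbed into $\cA_{u_0\to u_0}$. With these, the root row gives the first line of \eqref{uzs}. For leaves, the child sum is empty and $\cJ$ is empty.

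The main obstacle is purely bookkeeping. I must confirm that the skew structure of $\cA$ ($M$ against $-M$, $L^*$ against $-L$) matches the entries listed, so that $\cA$ is maximally monotone later, and that the parent and child entries are attributed to the correct rows under the multi-index convention. If a sign mismatch appears, I will fix it by checking the single-edge case ($n=2$) first and then propagating level by level.
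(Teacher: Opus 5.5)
Your proposal is correct and follows the paper's own route. Like the paper, you substitute the prediction relation $\tilde{z}_{i_q}^k = z_{i_q}^k + u_{i_q}^{k+1} - u_{i_{q-1}}^{k+1}$ into the $u$-update and read off the block entries of $\cA$, $\cQ$ and $r^k$. You also carry out explicitly the row-by-row check of the $z$-rows, the $s$-rows and the root and leaf conventions, which the paper leaves as ``easy to reexpress in matrix language''; the $z$-row step implicitly uses that $M_{i_q}$ is invertible under Assumption \ref{assume_MN}.
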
 
\begin{proof}
Considering the prediction step of \eqref{proto_T_relax},  the $u_i$-update, by variable substitution, becomes
\[
\begin{aligned}
0 &  \in  A_{ i_{q} } u_{  i_{q} }^{k+1}   + 
M_{ i_{q} } (\tilde{ z}_{ i_{q} }^{k} -  u_{  i_{q-1} }^{k+1}   )   
 +  \sum_{i_{q+1} =1}^{ |\cI_{q+1,i_1,\ldots, i_q} | }
M_{ i_{q+1} }  (u_{ i_{q+1} }^{k+1}  - \tilde{  z}_{ i_{q+1} }^{k}  ) 
+  \sum_{l\in \cP_{i_1,\ldots,i_q}} C_l u_{ i_{q-1}}^{k+1}   \\ 
& +  \sum_{j \in\cJ_{i_1,\ldots, i_{q} } } L_{j}^* s_{j}^{k}   + 
\sum_{j\in \cJ_{c,i_1,\ldots,i_{q}}} L_j^* (
\tilde{s}_{j}^{k} - s_j^k ) ,
\quad \forall i_q\in \big[\big| \cI_{q,i_1,\ldots, i_{q-1} } 
\big| \big].  \\
\end{aligned} 
\]
Then, it is easy to reexpress in matrix language according to $x = (u_0,u_i,z_i,s_j)$ as Lemma \ref{l_ppm}. Notice that the constant vector $a$ has been absorbed in the first diagonal element $\cA_{u_0\mapsto u_0}$ associated with the root $u_0$. 
\end{proof}

\begin{corollary} \label{c_AQ}
Given $\cA$ and $\cQ$ defined in Lemma \ref{l_ppm}, then the following assertions hold:

{\rm (i)} $\cA$ is maximally monotone; 

{\rm (ii)} $\cQ$ is diagonal (and thus self-adjoint), degenerate (positive semi-definite), and has closed range. 
\end{corollary}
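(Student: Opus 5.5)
Part (i) will follow from a splitting $\cA=\cA_d+\cS$, where $\cA_d$ is the block-diagonal part and $\cS$ collects all the off-diagonal linear couplings. We then apply the standard fact that the sum of a maximally monotone operator and a bounded linear skew-symmetric operator (hence monotone, continuous, full domain) is maximally monotone \cite[Corollary 25.5]{plc_book}.

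\textbf{The diagonal part.} Set
\[
\cA_d=\mathrm{diag}\big(A_0-a,\ (A_i)_{i\in[n-1]},\ 0_{z},\ (B_j^{-1})_{j\in[m]}\big).
\]
Each block is maximally monotone: $A_i$ by assumption, $A_0-a$ as a translate of $A_0$, the zero operator on the $z$-blocks trivially, and $B_j^{-1}$ as the inverse of a maximally monotone operator. A product of maximally monotone operators on a product Hilbert space is maximally monotone \cite[Proposition 20.23]{plc_book}, so $\cA_d$ is maximally monotone.

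\textbf{The off-diagonal part.} Let $\cS=\cA-\cA_d$, which is bounded and linear because all $M_{i_q}$ and $L_j$ are bounded. I would check from the entries listed in Lemma~\ref{l_ppm} that $\cS^*=-\cS$ by pairing each entry with its transpose and using self-adjointness of $M_{i_q}$ (Assumption~\ref{assume_MN}):
\begin{itemize}
\item $\cA_{u_{i_q}\rightarrow u_{i_{q+1}}}=M_{i_{q+1}}$ pairs with $\cA_{u_{i_{q+1}}\rightarrow u_{i_q}}=-M_{i_{q+1}}$;
\item $\cA_{u_{i_q}\rightarrow z_{i_q}}=M_{i_q}$ pairs with $\cA_{z_{i_q}\rightarrow u_{i_q}}=-M_{i_q}$;
\item $\cA_{u_{i_q}\rightarrow z_{i_{q+1}}}=-M_{i_{q+1}}$ pairs with $\cA_{z_{i_{q+1}}\rightarrow u_{i_q}}=M_{i_{q+1}}$;
\item $\cA_{u_{i_q}\rightarrow s_j}=L_j^*$ pairs with $\cA_{s_j\rightarrow u_{i_q}}=-L_j$.
\end{itemize}
Equivalently, $\langle \cS x\,|\,x\rangle=0$ for all $x$, since every paired contribution cancels. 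Hence $\cS$ is monotone and continuous with full domain, and $\cA=\cA_d+\cS$ is maximally monotone.

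\textbf{The main obstacle.} The only delicate point is the bookkeeping for the tree indexing. Each edge $(i_q,i_{q+1})$ and each dual assignment $j\in\cJ_{i_1,\ldots,i_q}$ must contribute exactly one antisymmetric pair, with no unmatched entry. This holds because every non-root node has a unique parent and every $s_j$ is assigned to exactly one primal node, as required by the Protocol. I would also double-check that no stray diagonal $M$ terms appear in $\cA$; by Lemma~\ref{l_ppm} they are absorbed into $\cQ$ and the cross terms.

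For part (ii), $\cQ=\mathrm{diag}\big(0_{u_0},(0_{u_i}),(M_i),(N_j)\big)$ is block-diagonal by Lemma~\ref{l_ppm}. It is self-adjoint since each $M_i$ and $N_j$ is self-adjoint. It is positive semidefinite because $\langle \cQ x\,|\,x\rangle=\sum_i\|z_i\|_{M_i}^2+\sum_j\|s_j\|_{N_j}^2\ge 0$, and it is degenerate because its kernel contains every vector supported on the $u$-blocks. Finally,
\[
\mathrm{ran}\,\cQ=\{0\}\times\{0\}^{n-1}\times\prod_i\mathrm{ran}\,M_i\times\prod_j\mathrm{ran}\,N_j .
\]
Each factor is closed by Assumption~\ref{assume_MN}(ii) (in fact each equals the whole space by uniform positive definiteness), and a finite product of closed sets is closed, so $\cQ$ has closed range.
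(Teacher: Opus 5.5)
Your proposal is correct and follows the paper's route. The paper also writes $\cA$ as a block-diagonal maximally monotone part plus a bounded skew-symmetric linear coupling; it cites a lemma from \cite{fxue_fail} for the conclusion, where you derive it from the product and sum rules for maximally monotone operators. For (ii) it likewise observes that $\cQ$ is block diagonal with zero $u$-blocks and blocks $M_i$, $N_j$ satisfying Assumption~\ref{assume_MN}. One small bookkeeping correction: each tree edge contributes three skew pairs ($u_{i_q}\leftrightarrow u_{i_{q+1}}$, $u_{i_q}\leftrightarrow z_{i_{q+1}}$, $u_{i_{q+1}}\leftrightarrow z_{i_{q+1}}$), not one. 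Also, skew-symmetry follows from the paired definition of the entries alone, without needing unique parents or unique dual assignments.
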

\begin{proof}
(i) The block operator $\cA$ exhibits a typical (diagonal) {\it monotone} + (off-diagonal) {\it skew} structure \cite{arias_2011,fxue_optl}, and thus is maximally monotone by \cite[Lemma 2.6]{fxue_fail}.

(ii) All off-diagonal elements of $\cQ$ are zero and the diagonal element $\cQ_{u_{i_q} \rightarrow u_{i_q}} = 0$, $\forall q\ge 0$. Furthermore, $M_{i_q}$ and $N_j$ fulfill Assumption \ref{assume_MN}. Thus (ii) follows.
\end{proof}

To associate with the residual term $r^k$ given in Lemma \ref{l_ppm}, we define the residual mapping as $\cR: \cH^{2n-1} \times \prod_{j=1}^m \cK_j \mapsto   \cH^{2n-1} \times \prod_{j=1}^m \cK_j: x\mapsto r = \cR x$, and in particular, $r^k = \cR x^k$. Unlike $\cA$ and $\cQ$ explicitly expressed in Lemma \ref{l_ppm}, this $\cR$ in our reformulation is highly implicit due to the intricate relationship between $\xtilde^k$ and $x^k$. Nevertheless, the prediction step of \eqref{proto_T_relax} can be rewritten as $\xtilde = \cT_{uzs} x := (\cA +\cQ)^{-1} (\cQ - \cR)  x $. For any  $x_1,x_2 \in \cH^{2n-1} \times \prod_{j=1}^m \cK_j $, define 
\be \label{delta}
\Delta :=\Delta(x_1,x_2) = \big\langle \cR x_1 - \cR x_2 \big| \cT_{uzs} x_1 - \cT_{uzs} x_2 \big\rangle = 
\big\langle \cR x_1 - \cR x_2 \big| \xtilde_1 -  \xtilde_2 \big\rangle,
\ee
which satisfies the following key inequality.
\begin{lemma} \label{l_residual}
Given $\Delta$ as in \eqref{delta}, let $\tau_j$ be an arbitrary positive number, set $\tau_i  =  \sum_{j\in \cJ_{c,i}} \tau_j$ and $\beta_i = \frac{1}{\sum_{l\in\cP_i} \beta_l^{-1}}$. Then it holds that $\forall x_1 = (u_{1,0},u_{1,i}, z_{1,i}, s_{1,j})$ and 
$ x_2 = (u_{2,0},u_{2,i}, z_{2,i}, s_{2,j})$: 
\begin{align*}
\Delta & \ge   -\sum_{i=1}^{n-1} \Big( 
\tau_i + \frac{1}{4 \beta_{i} }  \Big)
  \big\|   (z_{1, i}  - \ztilde_{1, i} ) - (z_{2, i}  - \ztilde_{2, i} )\big\|^2 \\
    & - \sum_{j=1}^m   \Big( \frac{\|  L_{j} \|^2 } {4 \tau_{j} }
+  \frac{1}{4\nu_j} \Big)
\big\|  (s_{1,j}  -  \stilde_{1,j})  - (s_{2,j}  -  \stilde_{2,j})\big\|^2  .
\end{align*}
\end{lemma}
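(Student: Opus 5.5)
The plan is to expand $\Delta$ block by block using the explicit form of $r^k=\cR x^k$ from Lemma \ref{l_ppm}. The only identification needed is that the $u$-components of $\xtilde=\cT_{uzs}x$ are the updated primal variables $u_{i_q}^{k+1}$. The prediction step of \eqref{proto_T_relax} then gives, for every non-root node,
\[
u_{i_q}^{k+1}-u_{i_{q-1}}^{k+1}=\ztilde_{i_q}-z_{i_q}.
\]
Throughout, write $\delta z_i := (\ztilde_{1,i}-z_{1,i})-(\ztilde_{2,i}-z_{2,i})$ and $\delta s_j := (\stilde_{1,j}-s_{1,j})-(\stilde_{2,j}-s_{2,j})$. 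The sign convention does not matter, since only the norms of these quantities enter the final bound. Since $r_{z_i}=0$, $\Delta$ splits into an $s$-part, a dual/correction part and a cocoercive part, which I bound separately.

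\emph{The $s$-part.} This is $\sum_j\langle D_j^{-1}s_{1,j}-D_j^{-1}s_{2,j}\,|\,\stilde_{1,j}-\stilde_{2,j}\rangle$; the constant $b_j$ cancels in the difference. Because $D_j$ is $\nu_j$-strongly monotone, $D_j^{-1}$ is $\nu_j$-cocoercive. Applying Fact \ref{f_coco}(ii) with $a=s_{1,j}$, $b=s_{2,j}$, $c=\stilde_{1,j}$, $d=\stilde_{2,j}$ gives the lower bound $-\frac{1}{4\nu_j}\|\delta s_j\|^2$.

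\emph{The dual/correction part.} Each $s_j$ is assigned to exactly one non-leaf node $u_{i_q}$, where it contributes $L_j^*(s_j-\stilde_j)$ paired with $u_{i_q}^{k+1}$. By the protocol, $j$ also lies in $\cJ_{c,i}$ for exactly one child $u_i$ of $u_{i_q}$, where it contributes $L_j^*(\stilde_j-s_j)$ paired with $u_i^{k+1}$. Regrouping these two occurrences, the term for $j$ becomes
\[
\langle L_j^*\delta s_j\,|\,(u_{1,i}^{k+1}-u_{1,i_q}^{k+1})-(u_{2,i}^{k+1}-u_{2,i_q}^{k+1})\rangle=\pm\langle \delta s_j\,|\,L_j\delta z_i\rangle .
\]
Young's inequality with the free parameter $\tau_j$ bounds this below by $-\tau_j\|\delta z_i\|^2-\frac{\|L_j\|^2}{4\tau_j}\|\delta s_j\|^2$. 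Summing over $j\in\cJ_{c,i}$ produces the coefficient $\tau_i=\sum_{j\in\cJ_{c,i}}\tau_j$ in front of $\|\delta z_i\|^2$.

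\emph{The cocoercive part.} At each non-root node $u_i$ with parent $u_p$, the term is $\langle C_i u_{1,p}^{k+1}-C_iu_{2,p}^{k+1}\,|\,u_{1,i}^{k+1}-u_{2,i}^{k+1}\rangle$, where $C_i:=\sum_{l\in\cP_i}C_l$. By Fact \ref{f_coco}(i), $C_i$ is $\beta_i$-cocoercive. Fact \ref{f_coco}(ii) with $a=u_{1,p}$, $c=u_{1,i}$ (and similarly $b,d$) then gives $-\frac1{4\beta_i}\|\delta z_i\|^2$, because $c-a=\ztilde_{1,i}-z_{1,i}$. If $\cP_i=\varnothing$, I use the convention $\beta_i=\infty$ and the term vanishes.

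Adding the three bounds yields exactly the claimed inequality. The main obstacle is the bookkeeping in the dual/correction part: one must check carefully, via the partition property of $\{\cJ_{c,i}\}$ from Sect.~\ref{sec_correction}, that every $j$ is paired with exactly one parent–child edge. This pairing is what makes the correction terms telescope into edge differences $u_{i}^{k+1}-u_{p}^{k+1}$, which are precisely the $z$-residuals.
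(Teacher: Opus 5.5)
Your proposal is correct and follows essentially the same route as the paper's proof. Both split $\Delta=\Delta_L+\Delta_C+\Delta_D$, apply Fact~\ref{f_coco}(ii) to the $D_j^{-1}$ and $C_i$ terms, and use Young's inequality with weight $\tau_j$ after converting the parent--child differences into $\ztilde_i-z_i$ through the $z$-step; the only difference is that you regroup the $L_j^*$ terms per dual index $j$, whereas the paper performs the same cancellation level by level using $\bigcup_{i_q}\cJ_{c,i_1,\ldots,i_q}=\cJ_{i_1,\ldots,i_{q-1}}$.
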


We denote the output of relaxation step by $x^+ = \cT_{uzs,\theta,\zeta} x$. Then it is easy to obtain the following key inequalities based on Lemma \ref{l_residual}.
\begin{proposition} \label{p_ine}
For any $x_1, x_2\in \cH^{2n-1} \times \prod_{j=1}^m \cK_j$, 
it hold that:
\begin{align*}
& \sum_{i=1}^{n-1} \frac{1}{\theta_i}  \Big( \big\|  z_{2,i} - z_{1,i} \big\|_{M_i}^2 - \big\|  z_{2,i}^+ - z_{1,i}^+\big\|_{M_i}^2 \Big)
+ \sum_{j=1}^{m} \frac{1}{\zeta_j} \Big( \big\|  s_{2,j} - s_{1,j} \big\|_{N_j}^2
- \big\|  s_{2,j}^+ - s_{1,j}^+  \big\|_{N_j}^2 \Big) \\ 
\ge &   \sum_{i=1}^{n-1}  \frac{1}{\theta_i^2}
  \big\|  ( z_{2,i} - z_{2,i}^+  ) - ( z_{1,i} - z_{1,i}^+ ) \big\|_{ (2-\theta_i) M_i - ( 2\tau_i  + \frac{1}{2 \beta_{i} } ) I }^2 \\
+ & \sum_{j=1}^{m}  \frac{1}{\zeta_j^2}
 \big\| ( s_{2,j} - s_{2,j}^+  ) - ( s_{1,j} - s_{1,j}^+ ) \big\|_{(2- \zeta_j) N_j - (\frac{\|  L_{j} \|^2 } {2 \tau_{j} } +
     \frac{1}{2 \nu_j} ) I}^2.
\end{align*}
\end{proposition}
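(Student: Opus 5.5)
We follow the standard proximal-point argument in the degenerate metric $\cQ$ and apply it to the reformulation of Lemma \ref{l_ppm}. For $x_1,x_2$ write $\xtilde_t=\cT_{uzs}x_t$, $t=1,2$. By Lemma \ref{l_ppm}, $-\cQ(\xtilde_t-x_t)-\cR x_t\in\cA\xtilde_t$. Corollary \ref{c_AQ}-(i) says $\cA$ is monotone, so
\[
\big\langle \cQ(x_1-\xtilde_1)-\cQ(x_2-\xtilde_2)\,\big|\,\xtilde_1-\xtilde_2\big\rangle \ge \big\langle \cR x_1-\cR x_2\,\big|\,\xtilde_1-\xtilde_2\big\rangle=\Delta .
\]
$\cQ$ is block diagonal and vanishes on the $u$-blocks (Corollary \ref{c_AQ}-(ii)). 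Hence only the $z_i$-components (metric $M_i$) and the $s_j$-components (metric $N_j$) enter the left-hand side, and the $u$-variables drop out entirely.

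Next, abbreviate $d_i:=(z_{1,i}-\ztilde_{1,i})-(z_{2,i}-\ztilde_{2,i})$, and define $e_j$ analogously for $s$. Apply the polarization identity $2\langle a-b\,|\,b'\rangle_{M}$ with $\xtilde_1-\xtilde_2=(x_1-x_2)-(\text{difference of residuals})$. This gives, for each $z$-block,
\[
2\langle d_i\,|\,\ztilde_{1,i}-\ztilde_{2,i}\rangle_{M_i}=\|z_{1,i}-z_{2,i}\|_{M_i}^2-\|\ztilde_{1,i}-\ztilde_{2,i}\|_{M_i}^2-\|d_i\|_{M_i}^2 ,
\]
and the analogous identity holds for $s_j$ with $N_j$. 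We then insert the lower bound on $\Delta$ from Lemma \ref{l_residual}. Multiplying by $2$, we obtain
\[
\sum_i\Big(\|z_1-z_2\|^2_{M_i}-\|\ztilde_1-\ztilde_2\|^2_{M_i}\Big)+\sum_j(\cdots)_{N_j}\ \ge\ \sum_i\|d_i\|^2_{M_i-(2\tau_i+\frac1{2\beta_i})I}+\sum_j\|e_j\|^2_{N_j-(\frac{\|L_j\|^2}{2\tau_j}+\frac1{2\nu_j})I}.
\]

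Finally, we pass to the relaxed iterate. From \eqref{proto_T_relax}, $z^+_{t,i}=z_{t,i}+\theta_i(\ztilde_{t,i}-z_{t,i})$, so $d_i=\frac{1}{\theta_i}\big((z_{1,i}-z_{1,i}^+)-(z_{2,i}-z_{2,i}^+)\big)$. The standard relaxation identity for $\delta=z_1-z_2$ reads
\[
\|\delta^+\|_M^2=(1-\theta)\|\delta\|_M^2+\theta\|\tilde\delta\|_M^2-\theta(1-\theta)\|\delta-\tilde\delta\|_M^2 .
\]
It yields $\|\delta\|^2-\|\tilde\delta\|^2=\frac1\theta(\|\delta\|^2-\|\delta^+\|^2)-(1-\theta)\|d\|^2$ in the $M_i$ metric. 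Substituting this and moving the $(1-\theta_i)\|d_i\|^2_{M_i}$ term to the right turns $M_i$ into $(2-\theta_i)M_i$. Then $\|d_i\|^2=\theta_i^{-2}\|\cdot\|^2$ gives the claimed weights, and the $s_j$-blocks with $\zeta_j$ are handled identically.

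The main obstacle is the bookkeeping: the residual inner product $\Delta$ must be controlled only through the $z,s$ increments. This is exactly Lemma \ref{l_residual}, which we take as given. The $u$-blocks must cancel, which relies on $\cQ$ having zero $u$-diagonal. The remaining steps are routine algebra, valid for any $\theta_i,\zeta_j\neq0$.
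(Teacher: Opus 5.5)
Your proof is correct and follows essentially the paper's route. Both arguments combine monotonicity of $\cA$ with the degenerate metric $\cQ$, so that only the $z_i$- and $s_j$-blocks survive, bound $\Delta$ from below by Lemma \ref{l_residual}, and finish with the relaxation step. The only difference is the order of the algebra: you first derive a firmly-nonexpansive-type inequality for the prediction $\cT_{uzs}$ and then pass to $x^+$ via the relaxation identity, whereas the paper substitutes $x-\xtilde=\theta^{-1}(x-x^+)$ first and then expands with $\|a\|^2-\|b\|^2=2\langle a|a-b\rangle-\|a-b\|^2$.
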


Proposition \ref{p_ine} immediately implies the $(\frac{1}{\theta_i} M_i, \frac{1}{\zeta_j}  N_j)$-based demiclosedness of $\cI-\cT_{zs,\theta,\zeta}$, which is essential for the convergence analysis.
\begin{corollary} \label{c_demi}
Given the fixed-point operator $\cT_{zs, \theta,\zeta}$ defined by \eqref{proto_T_relax}, 
$\cI-  \cT_{zs,\theta,\zeta }$ is $( \frac{1}{\theta_i} M_i, \frac{1}{\zeta_j}  N_j)$-demiclosed,
 if 

{\rm (i)}  $\theta_i,\zeta_j \in (0,2)$,  $\forall i\in [n-1]$ and $\forall j\in [m]$;

{\rm (ii)}  $M_i \succeq \alpha_i I$ with $\alpha_i > \frac{1}{2-\theta_i} (2\tau_i+\frac{1}{2\beta_i} )$, $\forall i\in [n-1]$;  

{\rm (iii)} $N_j \succeq \sigma_j I$ with $\sigma_j > \frac{1}{2-\zeta_j} (\frac{\|  L_{j} \|^2 } {2 \tau_{j} } +  \frac{1}{2 \nu_j}  )$,   $\forall j\in [m]$;

\noindent 
where $\tau_j $, $\tau_i$ and $\beta_i$ are defined in Lemma \ref{l_residual}. 
\end{corollary}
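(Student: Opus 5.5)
We have to show that $\cI-\cT_{zs,\theta,\zeta}$ is demiclosed in the weighted sense: if $(z_i^k,s_j^k)\weak(\bar z_i,\bar s_j)$ and $(\cI-\cT_{zs,\theta,\zeta})(z_i^k,s_j^k)\to 0$, then $(\bar z_i,\bar s_j)\in\Fix\cT_{zs,\theta,\zeta}$. Here the topology on $\cH^{n-1}\times\prod_j\cK_j$ is the one induced by the metric $\mathrm{diag}(\frac{1}{\theta_i}M_i,\frac{1}{\zeta_j}N_j)$. Under (i) and Assumption \ref{assume_MN} this metric is equivalent to the standard one.

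\textbf{Step 1: firm-nonexpansiveness-type inequality.} Under (i)--(iii), the weights $(2-\theta_i)M_i-(2\tau_i+\frac{1}{2\beta_i})I$ and $(2-\zeta_j)N_j-(\frac{\|L_j\|^2}{2\tau_j}+\frac{1}{2\nu_j})I$ in Proposition \ref{p_ine} are uniformly positive definite. So Proposition \ref{p_ine}, applied with $x_1,x_2$ carrying arbitrary $(z,s)$ components, gives
\[
\|T x_1-Tx_2\|_{\mathcal W}^2+\kappa\|(\cI-T)x_1-(\cI-T)x_2\|^2\le\|x_1-x_2\|_{\mathcal W}^2,
\]
where $T=\cT_{zs,\theta,\zeta}$, $\mathcal W=\mathrm{diag}(\frac1{\theta_i}M_i,\frac1{\zeta_j}N_j)$, and $\kappa>0$. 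The $u$-components enter only as intermediate quantities determined by $(z,s)$. In particular $T$ is nonexpansive, and indeed averaged, in $\|\cdot\|_{\mathcal W}$.

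\textbf{Step 2: demiclosedness from nonexpansiveness.} A nonexpansive operator on a Hilbert space has a demiclosed $\cI-T$ (Browder's principle, \cite[Corollary 4.28]{plc_book}). Apply it in the Hilbert space $\cH^{n-1}\times\prod_j\cK_j$ equipped with the inner product $\langle\cdot|\cdot\rangle_{\mathcal W}$. This is a genuine inner product whose norm is equivalent to the original, because $M_i,N_j$ are uniformly positive definite and $\theta_i,\zeta_j\in(0,2)$. Consequently weak convergence and strong convergence coincide in both topologies, and the conclusion transfers back to the original topology.

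If one prefers a self-contained argument, the Opial-type trick works directly. Let $y_k=(z^k,s^k)\weak\bar y$ with $y_k-Ty_k\to0$. Expand
\[
\|y_k-T\bar y\|_{\mathcal W}^2=\|y_k-\bar y\|_{\mathcal W}^2+\|\bar y-T\bar y\|_{\mathcal W}^2+2\langle y_k-\bar y|\bar y-T\bar y\rangle_{\mathcal W}.
\]
Combine this with $\|Ty_k-T\bar y\|_{\mathcal W}\le\|y_k-\bar y\|_{\mathcal W}$ and $y_k-Ty_k\to0$. Taking the $\limsup$ gives $\|\bar y-T\bar y\|_{\mathcal W}^2\le0$, hence $\bar y\in\Fix T$.

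\textbf{Main obstacle.} The delicate point is Step 1: confirming that Proposition \ref{p_ine} is truly an inequality about $T$ as a map of $(z,s)$ alone, holding for every pair $x_1,x_2$ rather than only when one of them is a fixed point. This requires checking that its left side involves only $z,s$ and their images, which it visibly does. If positivity of the weights only gave semi-definiteness, nonexpansiveness would still follow by dropping the right side. So the essential requirements are (i) for the sign of $\frac1{\theta_i},\frac1{\zeta_j}$ and (ii)--(iii) for positivity.
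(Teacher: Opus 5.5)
Your proposal is correct and follows the paper's route: under (i)--(iii) the weights on the right-hand side of Proposition \ref{p_ine} are uniformly positive definite, so $\cT_{zs,\theta,\zeta}$ is nonexpansive in the metric $\mathrm{diag}(\frac{1}{\theta_i}M_i,\frac{1}{\zeta_j}N_j)$, and a Browder-type demiclosedness principle then gives the result. The only difference is that the paper cites the $\cQ$-based extension \cite[Lemma 2.4]{fxue_jota}, whereas you observe that this reduced metric is non-degenerate and equivalent to the original one, so the classical principle (or your Opial-type argument) applies directly---though you should say that the weak (resp.\ strong) topologies induced by the two inner products coincide, not that ``weak and strong convergence coincide.''
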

\begin{proof}
Note that the standard definitions of non-expansiveness \cite[Definition 4.1-(ii)]{plc_book} and demiclosedness \cite[Definition 4.26]{plc_book} has been naturally extended to  $\cQ$-based concepts in \cite[Definition 2.1-(ii)]{fxue_rima} and \cite[Definition 2.2]{fxue_jota}. 
Observing Proposition \ref{p_ine} and noting $(z_i^+,s_j^+) = \cT_{zs,\theta,\zeta} (z_i,s_j)$,   $ \cT_{zs,\theta,\zeta }$ is $( \frac{1}{\theta_i} M_i, \frac{1}{\zeta_j}  N_j)$-nonexpansive under the above condition. Then, $\cI - \cT_{zs,\theta,\zeta }$ is $( \frac{1}{\theta_i} M_i, \frac{1}{\zeta_j}  N_j)$-demiclosed by \cite[Lemma 2.4]{fxue_jota}---an extension of Browder's demiclosedness principle \cite[Theorem 4.27]{plc_book}.
\end{proof}

\subsection{Convergence results}
The weak convergence of the sequence  $\big\{ (z_i^k, s_j^k) \big\}_{k\in\N}$ is stated as below.   
\begin{theorem} \label{t_zs}
Let  $\big\{ (u_0^k, u_i^k, z_i^k, s_j^k) \big\}_{k\in\N}$ be a  sequence generated by the relaxed $(u,z,s)$-scheme  \eqref{proto_T_relax}. Then, under the conditions in Corollary \ref{c_demi},  the following hold:

{\rm (i)} $\exists (z_i^\star, s_j^\star) \in \Fix \cT_{zs, \theta,\zeta}$, such that $ (z_i^k, s_j^k) \weak  (z_i^\star, s_j^\star) $, as $k\rightarrow \infty$. 

{\rm (ii)}  $ J_{A_0}^{\sum_{i_1=1}^{ |\cI_1|} M_{i_1}} 
\big( \sum_{i_1=1}^{ |\cI_1|} M_{i_1} z_{i_1}^\star -
 \sum_{j \in\cJ_{0 } } L_{j}^* s_{j}^\star  + a  \big)$ solves \eqref{p}.
\end{theorem}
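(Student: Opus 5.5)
The plan is the standard Krasnosel'ski\u{\i}--Mann argument, carried out in the weighted norm $\|\cdot\|_{\cW}$ with $\cW := \mathrm{diag}\big(\frac{1}{\theta_i}M_i, \frac{1}{\zeta_j}N_j\big)$ on $\cH^{n-1}\times\prod_{j=1}^m\cK_j$. This norm is equivalent to the canonical one, since by Assumption \ref{assume_MN} each $M_i$ and $N_j$ is uniformly positive definite and bounded, and $\theta_i,\zeta_j\in(0,2)$.

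\textbf{Step 1 (Fej\'er monotonicity).} Corollary \ref{c_existence} gives a point $(z_i^\star,s_j^\star)\in\Fix\cT_{zs,\theta,\zeta}$. By Proposition \ref{p_encoding}-(i) it lifts to $x_2=(u_0^\star,u_i^\star,z_i^\star,s_j^\star)\in\Fix\cT_{uzs,\theta,\zeta}$, so $x_2^+=x_2$. Apply Proposition \ref{p_ine} with $x_1=x^k$ and this $x_2$. Under conditions (ii)--(iii) of Corollary \ref{c_demi}, the weights $(2-\theta_i)M_i-(2\tau_i+\frac{1}{2\beta_i})I$ and $(2-\zeta_j)N_j-(\frac{\|L_j\|^2}{2\tau_j}+\frac{1}{2\nu_j})I$ are uniformly positive definite, with some constant $c>0$. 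This yields
\[
\|(z^{k+1},s^{k+1})-(z^\star,s^\star)\|_{\cW}^2 \le \|(z^{k},s^{k})-(z^\star,s^\star)\|_{\cW}^2 - c\,\|(z^k,s^k)-(z^{k+1},s^{k+1})\|^2 .
\]
Hence $\{(z^k,s^k)\}$ is bounded and $\cW$-Fej\'er monotone with respect to $\Fix\cT_{zs,\theta,\zeta}$. Telescoping gives $\sum_k\|(z^k,s^k)-(z^{k+1},s^{k+1})\|^2<\infty$, so the residual $(\cI-\cT_{zs,\theta,\zeta})(z^k,s^k)\to0$ strongly.

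\textbf{Step 2 (cluster points are fixed points).} Take any weak cluster point $\bar y$ along a subsequence. Corollary \ref{c_demi} says $\cI-\cT_{zs,\theta,\zeta}$ is demiclosed in the $\cW$-sense. Combined with the strong convergence of the residual, this gives $\bar y\in\Fix\cT_{zs,\theta,\zeta}$. Note that the demiclosedness in Corollary \ref{c_demi} rests on the nonexpansiveness of $\cT_{zs,\theta,\zeta}$ between arbitrary pairs, which is again Proposition \ref{p_ine}.

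\textbf{Step 3 (Opial).} The $\cW$-norm is a Hilbert norm equivalent to the original one, so weak topologies coincide. Opial's lemma, in the form \cite[Lemma 2.47]{plc_book} applied in the renormed space, then shows that the whole sequence converges weakly to some $(z_i^\star,s_j^\star)\in\Fix\cT_{zs,\theta,\zeta}$. This proves (i).

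\textbf{Part (ii).} This follows immediately from Proposition \ref{p_encoding}-(i) applied to this limit point. The solution mapping $J_{A_0}^{\sum M_{i_1}}(\cdots)$ evaluated at it gives $u_0^\star$, which solves \eqref{p}.

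\textbf{Main obstacle.} I expect the delicate point to be Step 1. Proposition \ref{p_ine} is stated for the apparent operator on $x=(u_0,u_i,z_i,s_j)$, so one has to check that the $u$-components are fully determined by $(z^k,s^k)$ (as noted before Fact \ref{f_T}). Then the inequality genuinely controls the reduced iteration, and the fixed point $x_2$ must be taken as the lifted one so that $x_2^+=x_2$. Once this identification is made, the rest is routine.
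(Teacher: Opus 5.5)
Your proposal is correct and follows the paper's proof essentially step by step. You obtain Fej\'er monotonicity from Proposition~\ref{p_ine} with $x_2$ a lifted fixed point (so $\xtilde_2=x_2^+=x_2$), then use summability of the increments together with the demiclosedness of Corollary~\ref{c_demi} to place every weak cluster point in the fixed-point set, and finally apply Proposition~\ref{p_encoding}-(i) for part (ii). The only difference is cosmetic: the paper writes out the uniqueness-of-cluster-point computation and then appeals to \cite[Lemma 2.46]{plc_book}, whereas you invoke Opial's lemma \cite[Lemma 2.47]{plc_book} in the equivalent weighted norm induced by $\mathrm{diag}(\theta_i^{-1}M_i,\zeta_j^{-1}N_j)$, which packages exactly that argument.
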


Similar to \cite[Eq. (3.21)]{bredies_lift}, the desired $o(1/k)$-rate of asymptotic regularity can also be obtained. 
\begin{proposition} \label{p_rate}
Under the conditions of Corollary \ref{c_demi}, 
 $\sum_{i=1}^{n-1} \frac{1}{\theta_i}  \|z_i^{k} - z_i^{k+1} \big\|_{M_i}^2 
+ \sum_{j=1}^{m} \frac{1}{\zeta_j}  \|s_j^{k} - s_j^{k+1} \big\|_{N_j }^2$ has the pointwise rate of  $o(1/k)$. More specifically, 
$\forall k \in \N$,
{\small \[
 \sum_{i=1}^{n-1} \frac{1}{\theta_i} \big\|z_i^{k} - z_i^{k+1} \big\|_{M_i }^2 
+ \sum_{j=1}^{m}  \frac{1}{\zeta_j}\big\|s_j^{k} - s_j^{k+1} \big\|_{N_j}^2 
 \le  \frac{1}{  k \cdot (\xi-1)} \bigg( 
 \sum_{i=1}^{n-1} \frac{1}{\theta_i}  \big\|z_i^{0} - z_i^\star \big\|_{M_i}^2 
+ \sum_{j=1}^{m} \frac{1}{\zeta_j} \big\|s_j^{0} - s_j^\star \big\|_{N_j }^2 \bigg) ,
\]}%
where $\xi = \min_{  i\in [n-1],  j\in [m] } \Big\{
\frac{2}{\theta_i} \big( 1- \frac{1} {\alpha_{i}}  (\tau_i+\frac{1}{4\beta_i}) \big), 
\frac{2}{\zeta_j} \big( 1-  \frac{1} {\sigma{j}} 
(\frac{\|  L_{j} \|^2 } {4 \tau_{j} } +  \frac{1}{4 \nu_j}  ) \big)  \Big\}$.
\end{proposition}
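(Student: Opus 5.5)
The plan is to derive the rate from the Fejér-type inequality of Proposition \ref{p_ine}, applied with $x_1 = x^\star$ a fixed point, combined with monotonicity of the successive differences, in the spirit of \cite[Eq. (3.21)]{bredies_lift}.

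\textbf{Step 1 (summability).} By Corollary \ref{c_existence}, fix $(z_i^\star,s_j^\star)\in\Fix\cT_{zs,\theta,\zeta}$ and its lift $x^\star$. Apply Proposition \ref{p_ine} with $x_2=x^k$ and $x_1=x^\star$. Since $x^\star$ is fixed, $z_{1,i}-z_{1,i}^+=0$ and $s_{1,j}-s_{1,j}^+=0$. Write $V_k:=\sum_i\frac{1}{\theta_i}\|z_i^k-z_i^\star\|_{M_i}^2+\sum_j\frac1{\zeta_j}\|s_j^k-s_j^\star\|_{N_j}^2$ and $D_k:=\sum_i\frac1{\theta_i}\|z_i^k-z_i^{k+1}\|_{M_i}^2+\sum_j\frac1{\zeta_j}\|s_j^k-s_j^{k+1}\|_{N_j}^2$. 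Using $M_i\succeq\alpha_i I$, each right-hand weight satisfies
\[
\tfrac{1}{\theta_i^2}\big((2-\theta_i)M_i-(2\tau_i+\tfrac{1}{2\beta_i})I\big)\succeq \tfrac{1}{\theta_i}\Big(\tfrac{2}{\theta_i}\big(1-\tfrac{1}{\alpha_i}(\tau_i+\tfrac{1}{4\beta_i})\big)-1\Big)M_i .
\]
This reduces to checking $\frac{2-\theta_i}{\theta_i}-\frac{1}{\theta_i\alpha_i}(2\tau_i+\frac{1}{2\beta_i})\ge \frac{2}{\theta_i}-\frac{2}{\theta_i\alpha_i}(\tau_i+\frac1{4\beta_i})-1$, which in fact holds with equality. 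The $s_j$ terms are handled the same way. Hence $V_k-V_{k+1}\ge(\xi-1)D_k$, where $\xi>1$ under the conditions of Corollary \ref{c_demi}. I still need to confirm that $\xi>1$ follows from (ii) and (iii), since $\xi-1$ must be positive. Telescoping gives $(\xi-1)\sum_{t=0}^{k}D_t\le V_0$.

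\textbf{Step 2 (monotonicity of $D_k$).} Apply Proposition \ref{p_ine} again, now with $x_2=x^{k+1}$ and $x_1=x^k$. The left side becomes $D_k-D_{k+1}$ and the right side is a nonnegative seminorm, so $D_{k+1}\le D_k$. The subtlety is that Proposition \ref{p_ine} is stated for the full vector, while the $u$-components are slaved to $(z,s)$. So I will apply it to $\cT_{uzs,\theta,\zeta}$ at $x^k$ and $x^{k+1}$, noting that only the $(z,s)$-components enter both sides.

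\textbf{Step 3 (conclusion).} Monotonicity gives $(k+1)D_k\le\sum_{t=0}^kD_t\le V_0/(\xi-1)$, which yields the stated bound with $k$ in place of $k+1$. For the $o(1/k)$ claim, use the standard argument: $\frac{k}{2}D_k\le\sum_{t=\lceil k/2\rceil}^{k}D_t\to0$, because the tail of a convergent series vanishes. The main obstacle is the weight comparison in Step 1, namely matching the Proposition \ref{p_ine} metric to $(\xi-1)$ times the Lyapunov metric, and that relies on the lower bounds $M_i\succeq\alpha_iI$ and $N_j\succeq\sigma_jI$.
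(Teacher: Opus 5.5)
Your proposal is correct and follows the paper's proof. You sum the Fejér inequality \eqref{aqw} after bounding its weights below by $(\xi-1)$ times the Lyapunov metric, and you apply Proposition \ref{p_ine} to consecutive iterates to show the successive differences are non-increasing. The check you left open is immediate: condition (ii) of Corollary \ref{c_demi} reads $(2-\theta_i)\alpha_i > 2\tau_i+\frac{1}{2\beta_i}$, which is exactly $\frac{2}{\theta_i}\big(1-\frac{1}{\alpha_i}(\tau_i+\frac{1}{4\beta_i})\big)>1$ (and likewise (iii) for the $j$-entries), so $\xi>1$. Your tail argument $\frac{k}{2}D_k\le\sum_{t=\lceil k/2\rceil}^{k}D_t\to 0$ also makes explicit the $o(1/k)$ step that the paper only asserts.
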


The typical choice of $M_i=\gamma_i I$ and $N_j = \eta_j I$ deserves particular attention.  
\begin{corollary} \label{c_I}
Let  $\big\{ (u_0^k, u_i^k, z_i^k, s_j^k) \big\}_{k\in\N}$ be a  sequence generated by the relaxed $(u,z,s)$-scheme  \eqref{proto_T_relax} with  $M_i=\gamma_i I$ and $N_j = \eta_j I$. 
If $\theta_i,\zeta_j\in (0,2)$,  
 $\gamma_i >  \frac{1} {2-\theta_i} ( 2\tau_i  + \frac{1}{2 \beta_{i} } ) $,  and $\eta_j > \frac{1} {2-\zeta_j} \big( \frac{\|  L_{j} \|^2 } {2 \tau_{j} } +  \frac{1}{2 \nu_j} ) $,  $\forall i\in [n-1]$,  $\forall j\in [m]$, where  $\tau_j$, $\tau_i$ and $\beta_i$ are defined in Lemma \ref{l_residual},  then the following holds:
 
{\rm (i) [weak convergence]} $\exists (z_i^\star, s_j^\star) \in \Fix \cT_{zs, \theta,\zeta}$, such that $ (z_i^k, s_j^k) \weak  (z_i^\star, s_j^\star)$, as $k\rightarrow \infty$;

{\rm (ii) [fixed-point encoding]} $ J_{A_0}^{ (\sum_{i_1=1}^{ |\cI_1|} \gamma_{i_1}) I } 
\Big( \sum_{i_1=1}^{ |\cI_1|} \gamma_{i_1} z_{i_1}^\star -
 \sum_{j \in\cJ_{0 } } L_{j}^* s_{j}^\star  + a  \Big)$ solves \eqref{p};

{\rm (iii) [asymptotic regularity]} $ \sum_{i=1}^{n-1} \frac{\gamma_i}{\theta_i} \big\|z_i^{k} - z_i^{k+1} \big\|^2 
+ \sum_{j=1}^{m}  \frac{\eta_j} {\zeta_j}\big\|s_j^{k} - s_j^{k+1} \big\|^2 $ has the pointwise rate of  $o(1/k)$. More specifically, $\forall k \in \N$,
{\small \[
 \sum_{i=1}^{n-1} \frac{\gamma_i}{\theta_i} \big\|z_i^{k} - z_i^{k+1} \big\|^2 
+ \sum_{j=1}^{m}  \frac{\eta_j} {\zeta_j}\big\|s_j^{k} - s_j^{k+1} \big\|^2 
 \le  \frac{1}{  k \cdot (\xi-1) } \bigg( 
 \sum_{i=1}^{n-1} \frac{\gamma_i}{\theta_i}  \big\|z_i^{0} - z_i^\star \big\|^2 
+ \sum_{j=1}^{m} \frac{\eta_j}{\zeta_j} \big\|s_j^{0} - s_j^\star \big\|^2 \bigg) ,
\]}%
where   $\xi = \min_{  i\in [n-1],  j\in [m] } \Big\{
\frac{2}{\theta_i} \big( 1- \frac{1}{\gamma_i} (\tau_i+\frac{1}{4\beta_i}) \big) , 
\frac{2}{\zeta_j} \big( 1- \frac{1} {\eta_j} (  \frac{\|  L_{j} \|^2 } {4 \tau_{j} } +  \frac{1}{4 \nu_j} ) \big)  \Big\}$.   
\end{corollary}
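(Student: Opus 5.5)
This corollary is the specialization of Theorem \ref{t_zs} and Proposition \ref{p_rate} to the scalar metrics $M_i=\gamma_i I$ and $N_j=\eta_j I$. The plan is to check that these metrics satisfy Assumption \ref{assume_MN} and the conditions of Corollary \ref{c_demi}, and then read off (i)--(iii) by substitution.

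\textbf{Verifying the hypotheses.} First, $\gamma_i I$ and $\eta_j I$ with $\gamma_i,\eta_j>0$ are bounded, self-adjoint and uniformly positive definite. Positivity holds because the stated lower bounds are positive: $\tau_i\ge 0$, $\beta_i>0$, $\nu_j>0$, and $\theta_i,\zeta_j\in(0,2)$. These metrics are also bijective, hence have closed range, so Assumption \ref{assume_MN} holds. Next, $\gamma_i I\succeq \alpha_i I$ holds with $\alpha_i:=\gamma_i$, and the hypothesis on $\gamma_i$ is exactly condition (ii) of Corollary \ref{c_demi}. Likewise, $\sigma_j:=\eta_j$ gives condition (iii), and condition (i) is assumed directly.

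\textbf{Reading off (i)--(iii).} Theorem \ref{t_zs}(i) then gives (i). Theorem \ref{t_zs}(ii) gives (ii) after substituting $\sum_{i_1} M_{i_1}=(\sum_{i_1}\gamma_{i_1})I$ into both the metric resolvent and its argument. For (iii), Proposition \ref{p_rate} applies. Since $\|x\|_{\gamma_i I}^2=\gamma_i\|x\|^2$ and $\|x\|_{\eta_j I}^2=\eta_j\|x\|^2$, the seminorms become scalar-weighted norms, and $\xi$ becomes the stated expression with $\alpha_i=\gamma_i$ and $\sigma_j=\eta_j$.

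\textbf{Main obstacle.} The one point needing care is that the bound is useful only if $\xi>1$. I would check that the stated conditions give this, following whatever argument the paper uses for $\xi$ in Proposition \ref{p_rate}. From $\gamma_i>\frac{1}{2-\theta_i}(2\tau_i+\frac{1}{2\beta_i})$ we get $\frac{1}{\gamma_i}(\tau_i+\frac{1}{4\beta_i})<\frac{2-\theta_i}{2}$. Hence $\frac{2}{\theta_i}\bigl(1-\frac{1}{\gamma_i}(\tau_i+\frac{1}{4\beta_i})\bigr)>\frac{2}{\theta_i}\cdot\frac{\theta_i}{2}=1$, and the $\eta_j$ terms behave the same way. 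Because the minimum is over finitely many indices, $\xi>1$ strictly. The $o(1/k)$ claim itself is inherited directly from Proposition \ref{p_rate}, so no new argument is needed.
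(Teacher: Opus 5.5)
Your proposal is correct and follows the paper's route exactly: the paper's proof simply cites Theorem \ref{t_zs} for (i)--(ii) and Proposition \ref{p_rate} for (iii), which is what you do after setting $\alpha_i=\gamma_i$ and $\sigma_j=\eta_j$. Your explicit check that the step-size conditions force $\xi>1$ is a useful addition that the paper leaves implicit. One small slip: the lower bound on $\gamma_i$ is zero, not positive, when $\cJ_{c,i}=\cP_i=\varnothing$, but the strict inequality still gives $\gamma_i>0$.
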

\begin{proof}
(i)--(ii): by Theorem \ref{t_zs}.

(iii) is from Proposition \ref{p_rate}.
\end{proof}

\section{Applications}
We first show that the particular instances of our scheme extend the (forward-)DRS and parallel Chambolle--Pock. Particularly, Theorem \ref{t_u} proves the weak convergence of solution trajectories  $\big\{ (u_0^k, u_i^k ) \big\}_{k\in\N}$ for the case of $m=0$ and $p=0$. When applying to convex minimization, we propose a primal-dual gap function, which achieves $O(1/k)$ ergodic rate by Proposition \ref{p_gap}. 

\subsection{Distributed extensions of the forward-DRS}
If $m=0$ in the problem \eqref{p}, the tree in Fig. \ref{fig_whole} does not involve dual assignments and corrections. Then the relaxed scheme \eqref{proto_T_relax} is simplified to: 
{\small
\be   \label{pure}
\left\lfloor \begin{aligned}
u_{ 0 }^{k+1}  & = J_{ A_{0 } }^{S_0 } \bigg(  \sum_{i_{1} =1}^{ |\cI_{1} | } M_{i_1 }  z_{i_1 }^{k} +a \bigg)  ; \\
u_{ i_{q} }^{k+1}  & = J_{ A_{i_{q} } }^{S_{ i_{q} } } 
\bigg(  M_{ i_{q} } \big( 2 u_{ i_{q-1} }^{k+1} -z_{ i_{q} }^{k}   \big)  +  \sum_{i_{q+1} =1}^{ |\cI_{q+1,i_1,\ldots, i_q} | }
M_{ i_{q+1} }  z_{ i_{q+1} }^{k} 
- \sum_{l\in \cP_{i_1,\ldots,i_q}} C_l u_{i_1,\ldots,i_{q-1}}^{k+1} \bigg) ; \\
z_{ i_{q}}^{k+1} & =  z_{ i_{q}}^{k} + \theta_{i_q}
 \big( u_{ i_{q}}^{k+1}  - u_{ i_{q-1} }^{k+1} \big), \quad 
\forall i_q \in \big[\big| \cI_{q,i_1,\ldots, i_{q-1} } \big| \big] , \\
\end{aligned} \right. 
\ee}%
where  $S_0=\sum_{i_{1} =1}^{ |\cI_{1} | } M_{i_1 } $,  $S_{i_q} =  M_{i_{q} } + \sum_{i_{q+1} =1}^{ |\cI_{q+1,i_1,\ldots, i_q} | } M_{ i_{q+1} }$.  This is a distributed extension of  the forward-DRS \cite{vu_2020,arias_half,bredies_ppa} and Davis--Yin \cite{ywt_2017} with flexible level-synchronous distributed computing (including parallel-up and sequential).  The convergence result is given below.
\begin{corollary}  
Let  $\big\{ (u_0^k, u_i^k, z_i^k ) \big\}_{k\in\N}$ be a  sequence generated by \eqref{pure}. If $\theta_i  \in (0,2)$, 
 $M_i \succ  \frac{1} {2\beta_i (2-\theta_i)}  I$, $\forall i\in [n-1]$,  the following hold:

{\rm (i)} $\exists (z_i^\star) \in \Fix \cT_{z, \theta}$, such that $  z_i^k  \weak  z_i^\star$, $\forall i\in [n-1]$, as $k\rightarrow \infty$. 

{\rm (ii)}  $ J_{A_0}^{\sum_{i_1=1}^{ |\cI_1|} M_{i_1}} 
\big( \sum_{i_1=1}^{ |\cI_1|} M_{i_1} z_{i_1}^\star  + a  \big)$ solves \eqref{p} with $m=0$.

{\rm (iii)}  $\sum_{i=1}^{n-1} \frac{1}{\theta_i}  \|z_i^{k} - z_i^{k+1} \big\|_{M_i}^2$ has the pointwise rate of  $o(1/k)$. More specifically,
\[
 \sum_{i=1}^{n-1} \frac{1}{\theta_i} \big\|z_i^{k} - z_i^{k+1} \big\|_{M_i }^2 
 \le  \frac{1}{  k \cdot (\xi-1)} 
 \sum_{i=1}^{n-1} \frac{1}{\theta_i}  \big\|z_i^{0} - z_i^\star \big\|_{M_i}^2 , \quad \forall k \in \N,
\]
where $\xi = \min_{  i\in [n-1] } \Big\{
\frac{2}{\theta_i} \big( 1- \frac{1} {4\alpha_{i}  \beta_i} \big)  \Big\}$, $\alpha_i$ is such that $M_i \succeq \alpha_i I$, $\forall i\in [n-1]$.
\end{corollary}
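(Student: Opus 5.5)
This corollary is the special case $m=0$ of Theorem \ref{t_zs}, Proposition \ref{p_rate} and Corollary \ref{c_demi}. The plan is to specialize those results rather than redo the analysis.

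First, with $m=0$ there are no dual variables, no $\cJ$-sets and no correction terms. The relaxed scheme \eqref{proto_T_relax} then reduces to the $(u,z)$-iteration. Solving each inclusion of \eqref{uzs} for $u_{i_q}^{k+1}$ through the metric resolvent gives exactly \eqref{pure}. Accordingly, $\cT_{zs,\theta,\zeta}$ becomes an operator $\cT_{z,\theta}$ on $\cH^{n-1}$, and by Proposition \ref{p_encoding} and Corollary \ref{c_existence} its fixed-point set is nonempty.

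Second, I specialize Lemma \ref{l_residual}. Every $\cJ_{c,i}$ is empty, so the correction part of the residual vanishes. The natural choice is then $\tau_i=\sum_{j\in\cJ_{c,i}}\tau_j=0$, and the dual sum in the bound disappears. This step needs a little care: Lemma \ref{l_residual} asks for $\tau_j>0$, but no $\tau_j$ exist here, so nothing is imposed. To justify $\tau_i=0$ directly, I would re-read the proof of the lemma. With $r_{u_{i_q}}^k=\sum_{l\in\cP_{i_q}}C_l u_{i_{q-1}}^{k+1}$, the only contribution to $\Delta$ is the cocoercive term. Fact \ref{f_coco}(i)--(ii) bounds it below by $-\frac{1}{4\beta_i}\|(z_{1,i}-\ztilde_{1,i})-(z_{2,i}-\ztilde_{2,i})\|^2$, using $\ztilde_i-z_i=u_i^{k+1}-u_{i-1}^{k+1}$. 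This gives $\Delta\ge-\sum_i\frac{1}{4\beta_i}\|\cdot\|^2$.

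Third, Proposition \ref{p_ine} then holds with weight $(2-\theta_i)M_i-\frac{1}{2\beta_i}I$. The hypothesis $M_i\succ\frac{1}{2\beta_i(2-\theta_i)}I$ makes this weight positive definite, so it is condition (ii) of Corollary \ref{c_demi} with $\tau_i=0$. Conditions (i)--(ii) of Corollary \ref{c_demi} therefore hold, and condition (iii) is vacuous.

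Finally, the three assertions follow directly. Theorem \ref{t_zs}(i) gives (i), and Theorem \ref{t_zs}(ii) gives (ii) once the $L_j^*s_j^\star$ term is dropped. Proposition \ref{p_rate} with $\tau_i=0$ and no $j$-terms gives (iii), with $\xi=\min_i\frac{2}{\theta_i}\bigl(1-\frac{1}{4\alpha_i\beta_i}\bigr)$.

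The main obstacle is checking that Proposition \ref{p_rate} still yields a valid bound. This requires $\xi>1$, which I would confirm follows from the stated conditions on $M_i$ and $\theta_i$ in the same way as in the general proof. If $\xi>1$ does not follow automatically, the statement should be read as holding under the implicit assumption $\xi>1$, as in the general result.
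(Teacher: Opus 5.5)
Your proposal is correct and follows the paper's route: the paper's proof simply invokes Theorem \ref{t_zs} and Proposition \ref{p_rate} in the case $m=0$, which means $\tau_i=0$ and no dual terms. The check you left open is immediate. The inequality $\frac{2}{\theta_i}\bigl(1-\frac{1}{4\alpha_i\beta_i}\bigr)>1$ is equivalent to $\alpha_i>\frac{1}{2\beta_i(2-\theta_i)}$, so $\xi>1$ holds exactly when $\alpha_i$ is taken as the uniform lower bound witnessing $M_i\succ\frac{1}{2\beta_i(2-\theta_i)}I$. That is also how the hypothesis must be read for it to give condition (ii) of Corollary \ref{c_demi}.
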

\begin{proof}
Clear from Theorem \ref{t_zs} and Proposition \ref{p_rate}. 
\end{proof}

\subsection{Pure resolvent splitting}
We now consider the case of $m=0$ and $p=0$, where our scheme becomes the so-called {\it pure resolvent splitting} \cite[Sect. 3.1]{full} or distributed DRS \cite{campoy_product,bredies_lift}:
{\small
\be   \label{pure_drs}
\left\lfloor \begin{aligned}
u_{ 0 }^{k+1}  & = J_{ A_{0 } }^{S_0 } \bigg(  \sum_{i_{1} =1}^{ |\cI_{1} | } M_{i_1 }  z_{i_1 }^{k} +a \bigg)  ; \\
u_{ i_{q} }^{k+1}  & = J_{ A_{i_{q} } }^{S_{ i_{q} } } 
\bigg(  M_{ i_{q} } \big( 2 u_{ i_{q-1} }^{k+1} -z_{ i_{q} }^{k}   \big)  +  \sum_{i_{q+1} =1}^{ |\cI_{q+1,i_1,\ldots, i_q} | }
M_{ i_{q+1} }  z_{ i_{q+1} }^{k}  \bigg) 
, \quad \forall i_q\in \big[\big| \cI_{q,i_1,\ldots, i_{q-1} } 
\big| \big] ; \\
z_{ i_{q}}^{k+1} & =  z_{ i_{q}}^{k} + \theta_{i_q}
 \big( u_{ i_{q}}^{k+1}  - u_{ i_{q-1} }^{k+1} \big), \quad 
\forall i_q \in \big[\big| \cI_{q,i_1,\ldots, i_{q-1} } \big| \big] , \\
\end{aligned} \right. 
\ee}%

The apparent and reduced fixed-point operators become $\cT_{uz,\theta}$ and $\cT_{z,\theta}$, respectively, where $\cT_{uz,\theta} = \cI - \begin{bmatrix}
1 & 0&0  \\   0& 1&0 \\
0&0 & \theta_i  \end{bmatrix} (\cI -\cT_{uz})$ for the input $x = (u_0,u_i,z_i) \in \cH^{2n-1}$, and 
$\cT_{z,\theta} = \cI -  \theta (\cI -\cT_{z})$ for the input $( z_i) \in \cH^{n-1}$.
In particular, the centralized or decontralized DRS can be further obtained, when \eqref{pure_drs} is equipped with the star-shaped  (Fig. \ref{fig_center}) or the fully sequential graph (Fig. \ref{fig_decenter}), respectively.  Corollary \ref{c_drs} gives the basic convergence results.
\begin{corollary}   \label{c_drs}
Let  $\big\{ (u_0^k, u_i^k, z_i^k ) \big\}_{k\in\N}$ be a  sequence generated by \eqref{pure_drs}. If $\theta_i  \in (0,2)$, the following hold:

{\rm (i)} $\exists (z_i^\star) \in \Fix \cT_{z, \theta}$, such that $  z_i^k  \weak  z_i^\star$, $\forall i\in [n-1]$, as $k\rightarrow \infty$. 

{\rm (ii)}  $ J_{A_0}^{\sum_{i_1=1}^{ |\cI_1|} M_{i_1}} 
\big( \sum_{i_1=1}^{ |\cI_1|} M_{i_1} z_{i_1}^\star  + a  \big)$ solves \eqref{p} with $m=0$ and $p=0$.

{\rm (iii)}  $\sum_{i=1}^{n-1} \frac{1}{\theta_i}  \|z_i^{k} - z_i^{k+1} \big\|_{M_i}^2$ has the pointwise rate of  $o(1/k)$. More specifically,
\[
 \sum_{i=1}^{n-1} \frac{1}{\theta_i} \big\|z_i^{k} - z_i^{k+1} \big\|_{M_i }^2 
 \le  \frac{\theta_{\max} }{  k \cdot (2- \theta_{\max})} 
 \sum_{i=1}^{n-1} \frac{1}{\theta_i}  \big\|z_i^{0} - z_i^\star \big\|_{M_i}^2 , \quad \forall k \in \N,
\]
where $\theta_{\max} = \max_{  i\in [n-1] } \theta_i $.
\end{corollary}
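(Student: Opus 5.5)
Corollary \ref{c_drs} is the special case $m=0$, $p=0$ of Theorem \ref{t_zs} and Proposition \ref{p_rate}. The only work is to check that the hypotheses of Corollary \ref{c_demi} hold automatically for any $\theta_i\in(0,2)$, and to evaluate the constant $\xi$ explicitly. With $m=0$ there are no dual variables, so there is no $s_j$-block and no correction terms; hence $\cJ_{c,i}=\varnothing$ and $\tau_i=\sum_{j\in\cJ_{c,i}}\tau_j=0$. With $p=0$ there are no cocoercive terms, so $\cP_i=\varnothing$ and $\beta_i=1/\sum_{l\in\cP_i}\beta_l^{-1}=+\infty$, i.e. $\frac{1}{\beta_i}=0$. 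I read this convention as saying the corresponding terms in Lemma \ref{l_residual} are simply absent. Indeed, $r^k\equiv 0$ in Lemma \ref{l_ppm}, so $\Delta=0$ and Lemma \ref{l_residual} holds trivially with zero right-hand side.

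\textbf{Step 1 (conditions).} Condition (iii) of Corollary \ref{c_demi} is void since $m=0$. Condition (ii) becomes $M_i\succeq\alpha_i I$ with $\alpha_i>0$. This is guaranteed by the uniform positive definiteness in Assumption \ref{assume_MN}. Condition (i) is exactly the hypothesis $\theta_i\in(0,2)$. So Corollary \ref{c_demi} applies, and Theorem \ref{t_zs}(i)--(ii) give items (i) and (ii) of the corollary, with the solution mapping reducing to $J_{A_0}^{\sum M_{i_1}}(\sum M_{i_1}z_{i_1}^\star+a)$ because the $L_j^*s_j^\star$ term disappears.

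\textbf{Step 2 (rate).} Substituting $\tau_i=0$ and $1/\beta_i=0$ into the definition of $\xi$ in Proposition \ref{p_rate} gives $\xi=\min_i 2/\theta_i=2/\theta_{\max}$. Then
\[
\xi-1=\frac{2-\theta_{\max}}{\theta_{\max}},\qquad \frac{1}{k(\xi-1)}=\frac{\theta_{\max}}{k(2-\theta_{\max})}.
\]
This gives the stated bound, and the $o(1/k)$ claim is inherited from Proposition \ref{p_rate}.

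\textbf{Main obstacle.} The only delicate point is justifying that the degenerate values ($\tau_i=0$, and the empty-sum convention $1/\beta_i=0$) are admissible in Lemma \ref{l_residual} and Proposition \ref{p_ine}. Lemma \ref{l_residual} as stated requires $\tau_j>0$, but it involves no $\tau_j$ at all when $m=0$.

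To make this airtight, I would redo the key inequality directly from Proposition \ref{p_ine}'s derivation with $\Delta=0$. Its right-hand side then carries the weight $(2-\theta_i)M_i\succ 0$. This yields $\big(\tfrac{1}{\theta_i}M_i\big)$-nonexpansiveness, and in fact averagedness, of $\cT_{z,\theta}$. From there, demiclosedness, Fejér monotonicity and the summability argument go through verbatim.
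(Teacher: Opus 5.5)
Your proposal is correct and follows the paper's route: the paper's proof is simply ``Clear from Theorem~\ref{t_zs} and Proposition~\ref{p_rate}.'' Your argument supplies the specialization the paper leaves implicit. For $m=p=0$ the residual vanishes, $r^k\equiv 0$ (so $\tau_i=0$ and $1/\beta_i=0$), which reduces the conditions of Corollary~\ref{c_demi} to $\theta_i\in(0,2)$ and gives $\xi=2/\theta_{\max}$, hence $\frac{1}{k(\xi-1)}=\frac{\theta_{\max}}{k(2-\theta_{\max})}$.
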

\begin{proof}
Clear from Theorem \ref{t_zs} and Proposition \ref{p_rate}. 
\end{proof}

\vskip.2cm
More remarkably, Theorem \ref{t_u} shows the weak convergence of the primal variables $\big\{ (u_0^k, u_i^k) \big\}_{k\in\N}$ (i.e., the solution trajectories) to  $u^\star$---a solution to \eqref{p} for this case. This extends the classic result  of DRS \cite{svaiter} to distributed versions.
To show this, we first present a simple fact of the fixed-point operator $\cT_{uz}$. 
\begin{fact} \label{f_ppm}
Let $\cT_{uz}$ be the apparent fixed-point operator defined by \eqref{pure_drs} with $\theta_{i_q} = 1$. Then  the following holds:

{\rm (i)} $\Fix \cT_{uz} =\Fix \cT_{uz,\theta} $;

{\rm (ii)} $ \cT_{uz} = (\cA+\cQ)^{-1} \cQ $, where $\cA$ and $\cQ$ are given in Lemma \ref{l_ppm};

{\rm (iii)} $\zer \cA = \Fix\cT_{uz}$. 
\end{fact}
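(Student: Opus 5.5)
Part (i) should follow directly from Fact \ref{f_T} applied to the special case $m=0$, $p=0$. In this case the dual block and the parameters $\zeta_j$ are absent. The relaxation $\cT_{uz,\theta} = \cI - \mathrm{diag}(1,1,\theta_i)(\cI-\cT_{uz})$ has an invertible diagonal weight whenever $\theta_i\neq 0$. Hence $x=\cT_{uz,\theta}x$ if and only if $\mathrm{diag}(1,1,\theta_i)(x-\cT_{uz}x)=0$, which holds if and only if $x=\cT_{uz}x$.

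For (ii), I will specialize Lemma \ref{l_ppm} to $m=0$ and $p=0$. The residual $r^k$ consists only of the terms $\sum_{j\in\cJ_{i_1,\ldots,i_q}} L_j^*(s_j^k-\stilde_j^k)$, the corrections $\sum_{j\in\cJ_{c,i_1,\ldots,i_q}} L_j^*(\stilde_j^k-s_j^k)$, $\sum_{l\in\cP_{i_1,\ldots,i_q}}C_l u_{i_{q-1}}^{k+1}$, and $r_{s_j}^k=D_j^{-1}s_j^k+b_j$. All of these index sets are empty, so $r^k\equiv 0$, i.e., $\cR=0$. The prediction step (with $\theta_{i_q}=1$ it is the whole step) then reads $0\in\cA\xtilde+\cQ(\xtilde-x)$, that is, $\cQ x\in(\cA+\cQ)\xtilde$. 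To write $\xtilde=(\cA+\cQ)^{-1}\cQ x$ as an equality of single-valued maps, I need $(\cA+\cQ)^{-1}$ to be single-valued on the range of $\cQ$, even though $\cQ$ is degenerate.

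I expect this single-valuedness to be the only delicate point. I will argue it from the well-posedness Fact for the prototype: given $z$, the equations of \eqref{pure_drs} determine $u_0$, then each $u_{i_q}$ level by level, uniquely through metric resolvents. The $z$-components are then determined by $\ztilde_{i_q}=z_{i_q}+u_{i_q}-u_{i_{q-1}}$. Equivalently, writing the inclusion $\cQ x\in(\cA+\cQ)\xtilde$ blockwise reproduces exactly these triangular equations. So any two solutions $\xtilde$ coincide, and the inverse is single-valued on $\mathrm{ran}\,\cQ$. The constant $a$ is already absorbed in $\cA_{u_0\rightarrow u_0}$, as noted in Lemma \ref{l_ppm}.

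For (iii), I will use (ii). Since $(\cA+\cQ)^{-1}$ is single-valued on $\mathrm{ran}\,\cQ$, the condition $x=\cT_{uz}x$ holds if and only if $\cQ x\in(\cA+\cQ)x=\cA x+\cQ x$ (using that $\cQ$ is linear and single-valued), which is equivalent to $0\in\cA x$. Hence $\Fix\cT_{uz}=\zer\cA$. For the reverse inclusion, if $0\in\cA x$ then $x$ solves $\cQ x\in(\cA+\cQ)x$, and uniqueness forces $\cT_{uz}x=x$.
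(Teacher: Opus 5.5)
Your proposal is correct and follows the paper's own argument. Part (i) comes from the definition of $\cT_{uz,\theta}$ with invertible diagonal relaxation weights, and parts (ii)--(iii) come from Lemma \ref{l_ppm} once the residual $r^k$ vanishes for $m=0$, $p=0$. Your level-by-level check that $(\cA+\cQ)^{-1}$ is single-valued on $\mathrm{ran}\,\cQ$ is left implicit in the paper, and it is what makes the equality in (ii) and the equivalence in (iii) rigorous.
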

\begin{proof}
(i) is from the definition of $\cT_{uz,\theta}$;

(ii)-(iii) is clear by Lemma \ref{l_ppm}, noting that the residual term $r^k$ vanishes in this special case.
\end{proof}

The following basic fact regarding the firm nonexpansiveness of metric resolvent slightly extends  \cite[Lemma 2.4]{plc}.
\begin{fact} \label{f_res}
Given  $a_i = J_{A}^M (b_i)$ for $i=1,2$, then $\|a_1-a_2\|_M^2 \le \|b_1-b_2\|_{M^{-1}}^2 -  \|(a_1-a_2)-M^{-1} (b_1-b_2)\|_M^2$.
\end{fact}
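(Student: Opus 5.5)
The plan is to reduce everything to the standard (unweighted) firm nonexpansiveness of the ordinary resolvent $J_{M^{-1}A}$ in the Hilbert space $\cH$ equipped with the inner product $\langle\cdot|\cdot\rangle_M$. I read the hypotheses of Fact \ref{f_res} as follows: $A$ is maximally monotone and $M$ is self-adjoint and uniformly positive definite, as in Assumption \ref{assume_MN}. By definition $J_A^M = J_{M^{-1}A}\circ M^{-1}$. So I set $c_i := M^{-1}b_i$, which gives $a_i = J_{M^{-1}A}(c_i)$, i.e.\ $M c_i - M a_i \in A a_i$, or equivalently $b_i - M a_i\in A a_i$.

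The key step is the monotonicity of $A$ applied to the pairs $(a_1, b_1-Ma_1)$ and $(a_2,b_2-Ma_2)$:
\[
\langle (b_1-b_2) - M(a_1-a_2) \,|\, a_1-a_2\rangle \ge 0 ,
\]
that is, $\|a_1-a_2\|_M^2 \le \langle b_1-b_2 | a_1-a_2\rangle = \langle M^{-1}(b_1-b_2) | a_1-a_2\rangle_M$. Now write $d := a_1-a_2$ and $e := M^{-1}(b_1-b_2)$ and use the polarization identity in the $M$-inner product:
\[
2\langle e|d\rangle_M = \|d\|_M^2 + \|e\|_M^2 - \|d-e\|_M^2 .
\]
Substituting this gives $\|d\|_M^2 \le \|e\|_M^2 - \|d-e\|_M^2$. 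Finally, $\|e\|_M^2 = \langle M^{-1}(b_1-b_2)| b_1-b_2\rangle = \|b_1-b_2\|_{M^{-1}}^2$, which is exactly the claimed inequality. So the argument amounts to \cite[Lemma 2.4]{plc} (firm nonexpansiveness) transported to the $M$-geometry.

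None of the steps is a genuine obstacle. The only points that need care are the following.
\begin{itemize}
\item \emph{Well-definedness of the resolvent.} Single-valuedness and full domain of $J_{M^{-1}A}$ hold because $M^{-1}A$ is maximally monotone in $(\cH,\langle\cdot|\cdot\rangle_M)$. This follows from Minty's theorem, as already invoked for the well-posedness Fact.
\item \emph{Norm bookkeeping.} One must keep track of $M$ versus $M^{-1}$ so that the $b$-term comes out in the $\|\cdot\|_{M^{-1}}$ norm, which is why the identity $\|e\|_M^2=\|b_1-b_2\|_{M^{-1}}^2$ is stated explicitly above.
\end{itemize}
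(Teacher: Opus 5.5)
Your proposal is correct and follows essentially the same route as the paper: you write the resolvent as the inclusion $b_i - Ma_i \in Aa_i$, apply monotonicity of $A$ to get $\|a_1-a_2\|_M^2 \le \langle a_1-a_2 | b_1-b_2\rangle$, and finish with the polarization identity in the $M$-inner product. This is exactly the paper's ``simple algebraic arrangement'', which you have written out explicitly.
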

\begin{proof}
According to the definition $J_A^M: = J_{M^{-1}A} \circ M^{-1}$ (cf. Sect. \ref{sec_notation}), the inclusion form is
$0\in Aa_i +Ma_i -b_i$. Applying the monotonicity of $A$ yields $0\le \langle a_1-a_2 | Aa_1-Aa_2\rangle = -\|a_1-a_2\|_M^2 + \langle a_1-a_2|b_1-b_2\rangle$. Simple algebraic arrangement yields $\|a_1-a_2\|_M^2 \le \|b_1-b_2\|_{M^{-1}}^2 - \|(a_1-a_2)-M^{-1} (b_1-b_2)\|_M^2$.
\end{proof}

The following lemma shows that $ \cT_{uz,\theta} $ satisfies the Lipschitz-like condition (cf. \cite[Assumption 7-(iii)]{fxue_jota}), which will be used in Theorem \ref{t_u}.
\begin{lemma}  \label{l_rho}
Given $ \cT_{uz,\theta} $ defined in \eqref{pure_drs}, there exists a constant $\rho > 0$, such that 
$\big\| \cT_{uz,\theta} (u_{1,0}, u_{1,i}, z_{1,i} ) - 
\cT_{uz,\theta} (u_{2,0}, u_{2,i}, z_{2,i} )  \big\|^2 
\le \rho \cdot \sum_{i=1}^{n-1} 
\big\| z_{1,i} - z_{2,i} \big\|_{M_i}^2$, $\forall (u_{1,0}, u_{1,i}, z_{1,i} ), (u_{2,0}, u_{2,i}, z_{2,i} ) \in \cH\times \cH^{n-1} \times \cH^{n-1}$.
\end{lemma}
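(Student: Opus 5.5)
The plan is to exploit the fact that, in \eqref{pure_drs}, the output $(u_0^{+},u_i^{+},z_i^{+})=\cT_{uz,\theta}(u_0,u_i,z_i)$ depends only on the input $z=(z_i)_{i\in[n-1]}$. The input $u$-components play no role, because every resolvent argument is built from $z^k$ and from the freshly computed parents $u_{i_{q-1}}^{k+1}$. So it suffices to show that the map $z\mapsto (u_0^+,u_i^+,z_i^+)$ is Lipschitz from $(\cH^{n-1},\sum_i\|\cdot\|_{M_i}^2)$ to the product space with the standard norm. I will do this by induction along the levels of the tree, from the root to the leaves, using Fact \ref{f_res}. Since the tree has finitely many nodes, finitely many Lipschitz constants will be accumulated.

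\textbf{Root.} Write $\delta u_{i_q}:=u_{1,i_q}^+-u_{2,i_q}^+$ and $\delta z_{i_q}:=z_{1,i_q}-z_{2,i_q}$. Fact \ref{f_res}, dropping the nonpositive last term, gives $\|\delta u_0\|_{S_0}^2\le\|\sum_{i_1}M_{i_1}\delta z_{i_1}\|_{S_0^{-1}}^2$, and the constant $a$ cancels. By Assumption \ref{assume_MN}, each $M_{i}$ and hence each $S_{i_q}$ is bounded and uniformly positive definite, so all the norms $\|\cdot\|_{S_{i_q}}$, $\|\cdot\|_{S_{i_q}^{-1}}$, $\|\cdot\|_{M_i}$ are equivalent to $\|\cdot\|$ with explicit constants ($\|M_i\|$, $\alpha_i^{-1}$, etc.). This yields $\|\delta u_0\|^2\le c_0\sum_{i}\|\delta z_i\|_{M_i}^2$.

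\textbf{Level $q$.} Assume the bound $\|\delta u_{i_{q-1}}\|^2\le c_{q-1}\sum_i\|\delta z_i\|_{M_i}^2$ for the parent. Apply Fact \ref{f_res} to $u_{i_q}^+=J_{A_{i_q}}^{S_{i_q}}(\cdot)$, whose argument difference is $M_{i_q}(2\delta u_{i_{q-1}}-\delta z_{i_q})+\sum_{i_{q+1}}M_{i_{q+1}}\delta z_{i_{q+1}}$. Expanding with $\|\sum_{r=1}^R v_r\|^2\le R\sum_{r=1}^R\|v_r\|^2$ and the norm equivalences gives $\|\delta u_{i_q}\|^2\le c_q\sum_i\|\delta z_i\|_{M_i}^2$. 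After finitely many levels (at most $n-1$), every $\delta u$ is controlled.

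\textbf{Relaxation and conclusion.} The $z$-update gives $\|z_{1,i}^+-z_{2,i}^+\|^2\le 3\big(\|\delta z_i\|^2+\theta_i^2\|\delta u_{i_q}\|^2+\theta_i^2\|\delta u_{i_{q-1}}\|^2\big)$, which is bounded in the same form. Summing all components then yields the claim, with $\rho$ equal to the sum of the accumulated constants.

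The main obstacle is purely bookkeeping: making sure that the recursion uses only the new parent value $u_{i_{q-1}}^{k+1}$, which is already bounded, and never the old $u^k$-inputs. This is exactly why the right-hand side involves only the $z$-seminorm. The $u$-components of $\cT_{uz,\theta}$ are not relaxed, so no extra $u$-input terms appear.
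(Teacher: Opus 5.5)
Your proposal is correct and follows essentially the same route as the paper. Both apply Fact \ref{f_res} at the root and then node by node down the levels, using the already-bounded new parent value $u_{i_{q-1}}^{+}$. Both invoke the equivalence of the $S_{i_q}$-, $S_{i_q}^{-1}$- and $M_i$-norms with $\|\cdot\|$ under Assumption \ref{assume_MN}, and accumulate finitely many constants over the tree; your explicit bound on the relaxed $z$-components is the step the paper compresses into its remark that the $z_{i_q}$-update reduces the claim to bounding the $u$-components.
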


Finally, the weak convergence of solution sequence $\big\{ (u_0^k,u_i^k) \big\}_{k\in\N}$ can be proved.
\begin{theorem} \label{t_u}
Let  $\big\{ (u_0^k, u_i^k, z_i^k) \big\}_{k\in\N}$ be a  sequence generated by the scheme  \eqref{pure_drs}. If $\theta_i \in (0,2)$, then $\exists (z_i^\star)\in \Fix \cT_{z}$, such that $z_i^k \weak z_i^\star$, $u_0^k \weak u^\star$ and $u_i^k \weak u^\star$, $\forall i\in [n-1]$, where $u^\star =  J_{A_0}^{\sum_{i_1=1}^{ |\cI_1|} M_{i_1}} 
\big( \sum_{i_1=1}^{ |\cI_1|} M_{i_1} z_{i_1}^\star +a\big)$ solves \eqref{p} with $m=0$ and $p=0$.
\end{theorem}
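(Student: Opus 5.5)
The plan follows the Svaiter-type argument for DRS, using the product-space reformulation of Fact \ref{f_ppm}: the apparent iteration is $x^{k+1} = \cT_{uz,\theta}(x^k)$ with $x^k=(u_0^k,u_i^k,z_i^k)$, $\cT_{uz}=(\cA+\cQ)^{-1}\cQ$ and $\zer\cA=\Fix\cT_{uz}$.

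\emph{Step 1: limits of $z$ and of $u^{k+1}-u^k$.} By Corollary \ref{c_drs}-(i), $z_i^k\weak z_i^\star\in\Fix\cT_z$, and by Corollary \ref{c_drs}-(iii), $z_i^k-z_i^{k+1}\to 0$ strongly. Apply Lemma \ref{l_rho} to the pair of inputs $x^k$ and $x^{k-1}$. Since $\cT_{uz,\theta}$ only depends on $z$ for the $u$-outputs, this gives
\[
\|u^{k+1}-u^{k}\|^2\le\rho\sum_i\|z_i^k-z_i^{k-1}\|_{M_i}^2\to 0.
\]
The update $z_{i_q}^{k+1}-z_{i_q}^k=\theta_{i_q}(u_{i_q}^{k+1}-u_{i_{q-1}}^{k+1})$ together with $\theta_{i_q}>0$ gives $u_{i_q}^{k+1}-u_{i_{q-1}}^{k+1}\to 0$ strongly. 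So all primal agents asymptotically agree.

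\emph{Step 2: boundedness.} $(z^k)$ is bounded, being weakly convergent. By Lemma \ref{l_rho} applied to $x^k$ and a fixed point from Corollary \ref{c_existence}, $(u^k)$ is bounded too. Hence every subsequence of $(u_0^k,u_i^k)$ has a weakly convergent sub-subsequence, say $u_0^{k_l+1}\weak\bar u$. By Step 1, $u_i^{k_l+1}\weak\bar u$ for all $i$.

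\emph{Step 3: identify the cluster point (main obstacle).} Write the prediction step as $0\in\cA\tilde x^k+\cQ(\tilde x^k-x^k)$, with $\tilde x^k=(u^{k+1},\tilde z^k)$. Here $\tilde z^k-z^k=(z^{k+1}-z^k)/\theta\to0$ strongly and $\cQ$ acts only on the $z$-block, so $\cQ(x^k-\tilde x^k)\to 0$ strongly and lies in $\cA\tilde x^k$. Also $\tilde x^{k_l}\weak(\bar u,\bar u,z^\star)$. Weak–strong closedness of the graph of the maximally monotone $\cA$ (Corollary \ref{c_AQ}-(i)) then gives $(\bar u,\bar u,z^\star)\in\zer\cA=\Fix\cT_{uz}$. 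The delicate point is that $\cA$ mixes weakly converging $u$ and $z$ through skew blocks; this is fine because the monotone-plus-skew structure keeps $\cA$ maximally monotone on the product space, so its graph is sequentially closed in weak$\times$strong topology.

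\emph{Step 4: uniqueness.} By Proposition \ref{p_encoding}-(i), membership in $\Fix\cT_{uz}$ with $z$-component $z^\star$ forces
\[
\bar u=J_{A_0}^{\sum M_{i_1}}\Bigl(\sum M_{i_1}z_{i_1}^\star+a\Bigr)=u^\star,
\]
which is uniquely determined by $z^\star$ and solves \eqref{p}. Thus every weak cluster point of the bounded sequence $(u_0^k,u_i^k)$ equals $u^\star$, so the whole sequence converges weakly to $u^\star$.
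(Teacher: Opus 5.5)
Your proposal is correct and follows essentially the same route as the paper: weak convergence of $z^k$ and asymptotic regularity, boundedness of the $u$-iterates via Lemma \ref{l_rho}, weak--strong closedness of $\gra\cA$ to place weak cluster points in $\zer\cA=\Fix\cT_{uz}$, and uniqueness because the $u$-part of a fixed point is determined by $z^\star$. The one cosmetic difference is that you pass to the limit directly along $\tilde x^{k_l}=\cT_{uz}x^{k_l}=(u^{k_l+1},\tilde z^{k_l})$, whereas the paper transfers the weak limit from $x^{k_l}$ to $\cT_{uz}x^{k_l}$ via $\cT_{uz}x^k-x^k\to 0$. Because of this, your Step 1 estimate $u^{k+1}-u^k\to 0$ is never actually used.
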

\begin{proof}
First, by Corollary \ref{c_existence}, Theorem \ref{t_zs} and Proposition \ref{p_ine}, we have $(z_i^k) \weak (z_i^\star) \in \Fix \cT_z$ and $z_i^{k+1} - z_i^{k} \rightarrow 0$ ($\forall i\in [n-1]$),  as $k\rightarrow +\infty$.
Combining Proposition \ref{p_ine} with Lemma \ref{l_rho}, the sequence  $\big\{ (u_0^k, u_i^k, z_i^k ) \big\}_{k\in\N}$ is bounded.
Take a subsequence $\big\{ (u_0^{k_l}, u_i^{k_l}, z_i^{k_l} ) \big\}_{k\in\N}$, such that $(u_0^{k_l}, u_i^{k_l}, z_i^{k_l} ) \weak (u_0^\star , u_i^\star , z_i^\star )$ as $k_l\rightarrow +\infty$, and particularly, the relaxation step yields $\cT_z (z_i^{k_l}) - (z_i^{k_l}) = \big[ \theta_i^{-1} \big] ( z_i^{k_l+1} -  z_i^{k_l} )  \rightarrow 0$, $\forall i\in [n-1]$.

On the other hand, applying Lemma \ref{l_rho} to $x_1 = x^k$ and $x_2 = x^{k-1}$, then $\exists \rho >0$, such that $\|x^{k+1} - x^k\|^2 \le \rho \cdot \sum_{i=1}^{n-1} \|z_i^k - z_i^{k-1} \|_{M_i}^2$. Then we have $x^{k+1} - x^k \rightarrow 0$, and hence, by the relaxation step, $\cT_{uz} x^k - x^k = \begin{bmatrix}
1 &0&0 \\0 &1&0 \\0 &0& \theta_i^{-1} 
\end{bmatrix} ( x^{k+1} - x^k)
 \rightarrow 0$, as $k\rightarrow +\infty$. Therefore, for the above subsequence $\big\{ x^{k_l} \big\}_{k\in\N}$, we have $\cT_{uz} x^{k_l} \weak (u_0^\star , u_i^\star , z_i^\star )$.

Based on Lemma \ref{l_ppm}, the prediction step of \eqref{pure_drs} (corresponding to $\theta_{i_q} = 1$) can be recast as $\cA \cT_{uz} x^{k_l} \owns \cQ (x^{k_l} -\cT_{uz} x^{k_l} )$\footnote{Notice that the residual term $r^k$ vanishes in this case.}. Since  $\cT_{uz} x^{k_l} \weak (u_0^\star, u_i^\star, z_i^\star )$  and  $ \cQ (x^{k_l} -\cT_{uz} x^{k_l} ) 
= \big( 0,0, M_i (z_i^{k_l} -\cT_z z_i^{k_l} ) \big) \rightarrow (0,0,0 )$, it then follows from the closedness of $\gra\cA$ in  $\cH^\text{weak} \times \cH^\text{strong} $ (due to maximaly monotonicity of $\cA$ by \cite[Proposition 20.38-(ii)]{plc_book}) that $(u_0^\star, u_i^\star, z_i^\star ) \in \zer\cA = \Fix\cT_{uz} = \Fix\cT_{uz,\theta} $ by Fact \ref{f_ppm}, Since the subsequence  $\big\{ (u_0^{k_l}, u_i^{k_l}, z_i^{k_l} ) \big\}_{k\in\N}$ is arbitrary, this indicates that every weak sequential cluster point of $\{x^k\}_{k\in\N}$---$(u_0^\star, u_i^\star, z_i^\star )$---lies in $\Fix \cT_{uz}  = \Fix \cT_{uz,\theta} $. This further implies that $(u_0^\star, u_i^\star, z_i^\star ) = \cT_{uz} (u_0^\star, u_i^\star, z_i^\star ) = (\cA+\cQ)^{-1} \cQ (u_0^\star, u_i^\star, z_i^\star ) =  (\cA+\cQ)^{-1}  (0, 0, M_i z_i^\star ) $. Since $z_i^\star$ is unique by Theorem \ref{t_zs}, $u_0^\star$ and $u_i^\star$ are also unique.
Finally, the weak convergence of  $\big\{ (u_0^{k}, u_i^{k}, z_i^{k} ) \big\}_{k\in\N}$, is obtained by \cite[Lemma 2.46]{plc_book}. Moreover, $u_0^\star = u_i^\star  = J_{A_0}^{\sum_{i_1=1}^{ |\cI_1|} M_{i_1}} \Big( \sum_{i_1=1}^{ |\cI_1|} M_{i_1} z_{i_1}^\star +a \Big)$, $\forall i\in [n-1]$, by Proposition \ref{p_encoding}-(i).
\end{proof}

\subsection{Recovery of parallel Chambolle--Pock}
We now would like to show that the Chambolle--Pock algorithm and its parallel version can also be exactly recovered from our scheme, for the case $n=1$, $m\in\N$ and $p=0$ of the problem \eqref{p_simple}:  
\be \label{min_eg}
\min_{u\in\cH} \Big(f +\sum_{j=1}^m (g_j\circ L_j) \Big) (u),
\ee
where $f$ and $g_j$ with $j\in [m]$ are proper, convex, lower semi-continuous and prox-friendly functions.
\begin{proposition} \label{p_cp}
The scheme \eqref{proto}, when being applied to the problem \eqref{min_eg} with  $M_1 = \frac{1}{\tau} I$ and $N_j = \frac{1}{\sigma_j} I$, is simplified as the parallel Chambolle--Pock \cite[Eq. (202)]{condat_tour}:
\be \label{ay}
\left\lfloor \begin{aligned}
u^{k+1}  & =\prox_{\tau f} \Big( u^{k}
-\tau  \sum_{j=1  }^m L_{j}^*  (2 s_{j}^{k } - s_j^{k-1} ) \Big)
  ; \\
s_{j }^{k+1} & = \prox_{  \sigma_j g_j^* } \big(  s_{j }^k+
\sigma_j   L_{j  } u^{k+1} \big) ,\quad \forall j\in [m] ,
\end{aligned} \right. 
\ee
Particularly, if $m=1$,  \eqref{ay} becomes the standard Chambolle--Pock \cite[Algorithm 1]{cp_2011}.
\end{proposition}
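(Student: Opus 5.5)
The plan is to realize \eqref{min_eg} as an instance of \eqref{p} with two primal nodes, specialize the tree and the protocol, and then eliminate the auxiliary variables from \eqref{uzs} (equivalently \eqref{proto}, by Proposition \ref{p_T}). The problem \eqref{min_eg} has only one operator, while the tree needs at least one edge to carry $M_1$. So I would take $n=2$ with $A_0=\partial f$ and $A_1=0$. This choice still describes \eqref{min_eg}, since adding the zero operator changes nothing. I would also set $a=0$, $b_j=0$, $p=0$, $B_j=\partial g_j$ and $D_j=\cN_{\{0\}}$, so that $D_j^{-1}=0$ and $B_j\square D_j=B_j$. The tree is then forced. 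The root $u_0$ has a single child $u_1$, which is a leaf. By the protocol every $s_j$ must sit at a non-leaf node, so $\cJ_0=[m]$. The correction for all $s_j$ must be performed at the only child, so $\cJ_{c,1}=[m]$.

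Next I would write \eqref{uzs} explicitly with $M_1=\frac1\tau I$ and $N_j=\frac1{\sigma_j}I$, in three updates.
\begin{itemize}
\item The root inclusion gives $u_0^{k+1}=\prox_{\tau f}\big(z_1^k-\tau\sum_j L_j^*s_j^k\big)$.
\item The dual inclusion gives $0\in \partial g_j^*(s_j^{k+1})+\frac1{\sigma_j}(s_j^{k+1}-s_j^k)-L_ju_0^{k+1}$, which is exactly the second line of \eqref{ay} with $u^{k+1}:=u_0^{k+1}$.
\item The leaf inclusion has $A_1=0$, so it is linear: $u_1^{k+1}=2u_0^{k+1}-z_1^k-\tau\sum_jL_j^*(s_j^{k+1}-s_j^k)$.
\end{itemize}

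Substituting the leaf update into $z_1^{k+1}=z_1^k+u_1^{k+1}-u_0^{k+1}$ gives the key identity
\[
z_1^{k+1}=u_0^{k+1}-\tau\sum_{j=1}^m L_j^*(s_j^{k+1}-s_j^k).
\]
Shifting the index by one and inserting this into the root update yields
\[
u_0^{k+1}=\prox_{\tau f}\Big(u_0^k-\tau\sum_{j=1}^m L_j^*(2s_j^k-s_j^{k-1})\Big),
\]
which is the first line of \eqref{ay}. Thus the iteration collapses to \eqref{ay} with $u$ and $s_j$ as the only variables. Setting $m=1$ then gives the standard Chambolle--Pock iteration \cite[Algorithm 1]{cp_2011} verbatim.

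The step I expect to need the most care is the initialization and the index bookkeeping. The identity for $z_1^{k+1}$ holds only from $k\ge0$ onward, so the first iterate requires choosing $z_1^0=u^0-\tau\sum_jL_j^*(s_j^0-s_j^{-1})$. This is consistent with the usual convention $s^{-1}=s^0$, i.e.\ $z_1^0=u^0$. I would also check that the metric resolvent of $A_0$ with metric $S_0=M_1$ reduces to $\prox_{\tau f}$, and that the dual resolvent with metric $N_j$ reduces to $\prox_{\sigma_jg_j^*}$. Both checks are immediate from the definition $J_A^M=J_{M^{-1}A}\circ M^{-1}$.
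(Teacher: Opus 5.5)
Your proposal is correct and follows essentially the same route as the paper. The paper adds the virtual node $A_1\equiv 0$ with $\cJ_0=[m]$, reads off $u_1^{k}-w_1^{k}=u_0^{k}-\tau\sum_{j}L_j^*(s_j^{k}-s_j^{k-1})$ directly from the leaf inclusion of \eqref{proto} (your identity for $z_1^{k}$, since $z_1=u_1-w_1$), and substitutes it into the root inclusion. Working in the $(u,z,s)$ form \eqref{uzs} is only a change of variables, and your remark on choosing $z_1^0$ consistently with $s^{-1}$ is a bookkeeping point the paper leaves implicit.
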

\begin{proof}
Let $a=0$, $A_0 = \partial f$ and $B_j = \partial g_j$, $D_j = \cN_{\{0\}}$, $b_j=0$, $\forall j\in [m]$.
Considering the primitive $(u,w,s)$-scheme \eqref{proto}, set a {\it virtual} primal node $A_1 \equiv 0$ as a child of $A_0$ and let  $M_1 = \frac{1}{\tau} I$ and $N_j = \frac{1}{\sigma_j} I$. Since there are only two primal nodes, one has to set $\cJ_0=[m]$. Then \eqref{proto} becomes:
\be \label{rw}
\left\lfloor \begin{aligned}
0 & \in   \partial f ( u_{ 0 }^{k+1})   + \frac{1}{\tau} 
  (u_{0 }^{k+1}  - u_{ 1 }^{k}  +  w_{1 }^k )  
+  \sum_{j =1 }^m L_{j}^* s_{j}^{k}  ; \\
0 & \in   \partial g_j^* ( s_{j } ^{k+1} ) + \frac{1}{\sigma_j} (s_{j }^{k+1} - s_{j }^{k}) - L_{j  } u_0^{k+1} , \quad  \forall j  \in [m]  ; \\
0 & =  w_{1}^{k+1} -w_{ 1 }^{k} + u_{ 1 }^{k}  - u_{0}^{k+1}; \\
0 &  \in  A_{1 } u_{  1 }^{k+1}   + \frac{1}{\tau} 
  (u_{ 1 }^{k+1}  - u_{0 }^{k+1}  -   w_{ 1 }^{k+1} )   +  
   \sum_{j=1  }^m L_{j}^*  (s_{j}^{k+1} - s_j^k ) .
\end{aligned} \right. 
\ee
Since $A_1\equiv 0$, the last line of \eqref{rw} becomes 
$ u_{ 1 }^{k } -   w_{ 1 }^{k }  =   u_{0 }^{k } - \tau  
   \sum_{j=1  }^m L_{j}^*  (s_{j}^{k } - s_j^{k-1} )$.
Substituting into the first inclusion of \eqref{rw} yields
$0  \in   \partial f ( u_{ 0 }^{k+1})   + \frac{1}{\tau} 
  (u_{0 }^{k+1}  - u_{ 0 }^{k} )  
+   \sum_{j=1  }^m L_{j}^*  (2 s_{j}^{k } - s_j^{k-1} )$, which completes the proof. 
\end{proof}

As summary, Table \ref{table_drs} lists the special instances of our \eqref{proto_T_relax}. Note that the Chambolle--Pock \cite[Algorithm 1]{cp_2011} cannot be recovered from  \cite[Algorithm 1]{fjaa_pds}.
\begin{table}[h!]
  \centering
   \caption{Recovery of existing methods from our  scheme \eqref{proto_T_relax}.}\vspace{-1em}
   \resizebox{1.0\columnwidth}{!} {
 \begin{tabular}{|l||l|l|} 
    \hline
     methods &  graph topology & instance of our \eqref{proto_T_relax}  \\
    \hline\hline
    DRS \cite{lions} & 
     \makecell[l]{ unique tree  (Fig. \ref{fig_center}/\ref{fig_decenter}) }  
     & $M_1 = \frac{1}{\gamma} I$, $\cP_1 = \varnothing$,
     $\cJ_0 = \varnothing$  \\
\hline
 \tabincell{l}{ Davis--Yin \\  \cite[Algorithm 1]{ywt_2017} }     
     &    \makecell[l]{unique tree (Fig. \ref{fig_center}/\ref{fig_decenter}) \\ loading cocoercive on child}  & 
       $M_1 = \frac{1}{\gamma} I$, $\cP_1 = \{1\}$, 
        $\cJ_0 = \varnothing$   \\
\hline
    \cite[Theorem 5.1]{campoy_product} & Fig. \ref{fig_center} &  \makecell[l]{ $M_i = \frac{1}{\gamma} I$,  $\cP_i = \varnothing$, \\   $\cJ_i = \varnothing$ ,
    $\forall i\in [n-1]$  } \\
\hline
   \makecell[l]{  \cite[Eq.(3.14)]{bredies_ppa} \\ 
    \cite[Example 4.5]{fjaa_graph} }    & Fig. \ref{fig_center} & \multirow{2}{*} [-1ex] { \makecell[l]{  $M_i = \frac{1}{\gamma} I$,  $\cP_i = \{ i \}$, \\   $\cJ_i = \varnothing$,    $\forall i\in [n-1]$  }  } \\
\cline{1-2}
   \makecell[l]{  \cite[Eq.(3.15)]{bredies_ppa} \\ 
    \cite[Example 4.4]{fjaa_graph} }   & Fig. \ref{fig_decenter} &  \\
\hline
    \cite[Eq.(197)]{condat_tour}  & Fig. \ref{fig_center} &
    \makecell[l]{  $M_i = \frac{1}{\tau} w_i I$, where $\sum_{i=1}^{n-1} w_i = 1$ \\ $\cP_i=\varnothing$, $\cJ_i = \varnothing$,  $\forall i\in [n-1]$} \\
\hline 
    \cite[Algorithm 1]{cp_2011} & \multirow{2}{*}{ 
    \makecell[l]{ Fig. \ref{fig_center}/\ref{fig_decenter} consisting of  \\ two primal nodes only, \\ second node is virtual   } }  
    & $M_1 = \frac{1}{\tau}   I$, 
    $N_1 = \frac{1}{\sigma} I$,   $\cJ_0 = \{1\}$\\
\cline{1-1} \cline{3-3}
   \makecell[l]{ parallel Chambolle--Pock \\  \cite[Eq.(202)]{condat_tour} }  &  &     \makecell[l]{  $M_1 = \frac{1}{\tau}   I$,      $\cJ_0 = [m]$ \\
    $N_j = \frac{1}{\sigma_j} I$,    $\forall j\in [m]$ } \\
\hline
    \end{tabular}
   }
    \vskip 0.5em
\label{table_drs}
\vspace*{-0.3cm}
\end{table}

\subsection{Convex minimization problem}
We consider the following problem:
\be \label{primal}
\inf_{u\in \cH} \sum_{i=0}^{n-1} f_i (u) + 
\sum_{j=1}^m (g_j\square d_j) (L_j u-b_j) +\sum_{l=1}^p
h_l(u) -\langle u|a\rangle,
\ee
where $f_i:\cH\mapsto \R\mcup \{+\infty\}$ and 
$g_j: \cK_j \mapsto \R\mcup \{+\infty\}$ are proper, lower semi-continuous, convex and prox-friendly functions,  $h_l:\cH\mapsto \R$ is convex and differentiable with $\beta_l^{-1}$-Lipschitz gradient. The symbol $\square$ denotes the infimal convolution. The function $d_j$ is $\nu_j^{-1}$-strongly convex, which implies that $d_j^*$ is Fr\'{e}chet differentiable, and $\nabla d_j^*$ is $\nu_j$-Lipschitz, and thus,  $\nu_j$-cocoercive by Baillon--Haddad theorem \cite[Corollary 18.17]{plc_book}.

Based on \cite[Problem 2.1]{bot_jmiv_2014},  the dual problem to \eqref{primal} is given as
\vspace*{-.3cm}
\[
\sup_{\substack{ s_1,\ldots,s_j \in \\ \cK_1\times \cK_m} } 
\bigg\{ - \bigg( \Big(\sum_{i=0}^{n-1} f_i \Big)^* \square
\Big(\sum_{l=1}^p h_l \Big)^* \bigg) 
\Big( a-\sum_{j=1}^m L_j^* s_j \Big)
- \sum_{j=1}^m \Big( g_j^*(s_j) +  d_j^* (s_j) + \langle s_j | b_j\rangle \Big)  \bigg\}.
\vspace*{-.3cm}
\]
Assume some qualification condition (see, for instance, \cite[Remark 2.1]{plc_dual_2010}) is fulfilled, such that the optimal primal-dual solution pair $(u^\star, s_j^\star)_{j\in [m]}$ exists, and strong duality holds. Most existing algorithms can only handle the cases of $n=1$ \cite{bot_jmiv_2014,vu_2013}, $m=0$ \cite{bredies_lift,tam_2026,morin_frugal} or $d_j = \iota_{ \{0\} }$ \cite{fjaa_pds}.
Then, applying the Fermat's rule \cite[Theorem 16.3]{plc_book} to \eqref{primal} yields \eqref{p} with the following correspondences: $A_i = \partial f_i$, $B_j^{-1} = \partial g_j^*$, $D_j^{-1} = \nabla d_j^*$ and $C_l = \nabla h_l$. This can be addressed by our relaxed scheme \eqref{proto_T_relax}. Thus, our Theorem \ref{t_zs} and Proposition \ref{p_rate} apply to this setting. 

\vskip.2cm
Now, we particularly consider the special case of $m=0$ and $p=0$ in \eqref{p}, i.e., 
\be \label{p_inf}
\inf_{u\in \cH}  \underbrace{ f_0(u) -\langle u|a\rangle}
_{:=\tilde{f}_0 (u)}
+ \sum_{i=1}^{n-1} f_i(u).
\ee
We apply the $(u,w)$-prototype \eqref{proto}\footnote{Recall that \eqref{proto} is a special case of \eqref{proto_T_relax} with $\theta_i=1$ and $\zeta_j = 1$, $\forall i\in [n-1]$, $\forall j\in [m]$.} with $m=0$ and $p=0$, and investigate the properties of the primal-dual gap, following the idea of \cite[Sect. 3]{fxue_optl}. 

Similar to Lemma \ref{l_ppm}, we first reformulate \eqref{proto} with $m=0$ and $p=0$ in a product Hilbert space. 
\begin{lemma} \label{l_ppm_w}
The $(u,w)$-prototype \eqref{proto} with $m=0$ and $p=0$ can be reformulated as $ 0\in \cA x^{k+1} + \cQ (x^{k+1} - x^k)$ in a product Hilbert space, where $x$, $\cA$ and $\cQ$ are specified below:

$\bullet$   $x = (u_0,u_i,w_i)_{i\in [n-1], j\in [m]} \in \cH\times  \cH^{n-1} \times \cH^{n-1}$.

$\bullet$ The upper-triangular entries of block operator $\cA$ (including the diagonal) are defined as follows (cf. Remarks \ref{rmk_notation} and \ref{rmk_notation2}):
$\cA_{u_{i_{q-1}}\rightarrow w_{i_q}} = M_{i_q}$, $i_q\in \cI_{q,i_1,\ldots,i_{q-1}}$, $\forall q\ge 1$;  $\cA_{u_{i_{q}}\rightarrow w_{i_q}} = - M_{i_q}$, $\forall q\ge 1$; 
$\cA_{u_{0}\rightarrow u_{0}}: \cH\mapsto 2^\cH: x\mapsto A_{0}x -a$,
$\cA_{u_{i_q}\rightarrow u_{i_q}} = A_{i_q}$, $\forall q\ge 1$.
The remaining entries are determined by skew-symmetry where applicable; all other unspecified entries are zero.

$\bullet$ The block preconditioner $\cQ$ is self-adjoint:
 $\cQ_{u_{i_{q}}\rightarrow u_{i_q}} = M_{i_q}$,  $\cQ_{w_{i_{q}}\rightarrow w_{i_q}} = M_{i_q}$, and $\cQ_{u_{i_{q}}\rightarrow w_{i_q}} = - M_{i_q}$,  $\cQ_{w_{i_{q}}\rightarrow u_{i_q}} = - M_{i_q}$, $\forall q\ge 1$.  All other unspecified entries are zero.
\end{lemma}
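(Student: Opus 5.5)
The plan is to verify the claimed inclusion row by row, taking \eqref{proto} with $m=0$, $p=0$ as the starting point. In this case every dual, correction and cocoercive term vanishes. The $u_0$-step then reads $0\in A_0u_0^{k+1}-a+\sum_{i_1}M_{i_1}(u_0^{k+1}-u_{i_1}^k+w_{i_1}^k)$, and each $u_{i_q}$-step reads
\[
0\in A_{i_q}u_{i_q}^{k+1}+M_{i_q}(u_{i_q}^{k+1}-u_{i_{q-1}}^{k+1}-w_{i_q}^{k+1})+\sum_{i_{q+1}}M_{i_{q+1}}(u_{i_q}^{k+1}-u_{i_{q+1}}^k+w_{i_{q+1}}^k).
\]
The $w$-step is $0=w_{i_q}^{k+1}-w_{i_q}^k+u_{i_q}^k-u_{i_{q-1}}^{k+1}$. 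In each of these I will rewrite every term as a sum of two parts: a part depending only on $x^{k+1}$, which gives the corresponding row of $\cA x^{k+1}$, and a part involving the difference $x^{k+1}-x^k$, which gives the row of $\cQ(x^{k+1}-x^k)$.

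\textbf{The $w_{i_q}$-row.} I rewrite this row as
\[
0=M_{i_q}\big[(u_{i_q}^{k+1}-u_{i_{q-1}}^{k+1})+(w_{i_q}^{k+1}-w_{i_q}^k)-(u_{i_q}^{k+1}-u_{i_q}^k)\big],
\]
after multiplying by $M_{i_q}$, which is invertible by Assumption \ref{assume_MN}. The part $-M_{i_q}u_{i_{q-1}}^{k+1}+M_{i_q}u_{i_q}^{k+1}$ is the $\cA$ contribution. It gives the entries $\cA_{w_{i_q}\to u_{i_{q-1}}}=-M_{i_q}$ and $\cA_{w_{i_q}\to u_{i_q}}=M_{i_q}$, which are the skew counterparts of the stated upper entries $\cA_{u_{i_{q-1}}\to w_{i_q}}=M_{i_q}$ and $\cA_{u_{i_q}\to w_{i_q}}=-M_{i_q}$. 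The remaining part is $M_{i_q}(w^{k+1}-w^k)-M_{i_q}(u_{i_q}^{k+1}-u_{i_q}^k)$, which is exactly the $w$-row of $\cQ(x^{k+1}-x^k)$.

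\textbf{The $u_{i_q}$-row.} The parent-link term $M_{i_q}(u_{i_q}^{k+1}-u_{i_{q-1}}^{k+1}-w_{i_q}^{k+1})$ is split as follows. The terms $M_{i_q}u_{i_q}^{k+1}-M_{i_q}w_{i_q}^{k+1}$ are rewritten as $M_{i_q}(u_{i_q}^{k+1}-u_{i_q}^k)-M_{i_q}(w_{i_q}^{k+1}-w_{i_q}^k)$, which is the $\cQ$-row, plus $M_{i_q}(u_{i_q}^k-w_{i_q}^k)$. Using the $w$-step, $u_{i_q}^k-w_{i_q}^k=u_{i_{q-1}}^{k+1}-w_{i_q}^{k+1}$. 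Hence the parent link becomes the $\cQ$-row plus $-M_{i_q}w_{i_q}^{k+1}$, and this last term supplies $\cA_{u_{i_q}\to w_{i_q}}=-M_{i_q}$. For each child link, $u_{i_{q+1}}^k-w_{i_{q+1}}^k=u_{i_q}^{k+1}-w_{i_{q+1}}^{k+1}$ by that child's $w$-step, so the child term becomes exactly $M_{i_{q+1}}w_{i_{q+1}}^{k+1}$. This supplies $\cA_{u_{i_q}\to w_{i_{q+1}}}=M_{i_{q+1}}$ and carries no $\cQ$ contribution, consistent with $\cQ$ having no $u_0$ block. The root row is the same computation with the parent term absent and $-a$ absorbed into $\cA_{u_0\to u_0}$.

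\textbf{Main obstacle.} The step needing care is the bookkeeping of signs in the $w$-row. The $\cQ$-block pattern $(M,-M;-M,M)$ must be matched, and the $\cA$-entries obtained from the $u$-rows and $w$-rows must be genuinely negatives of each other, so that $\cA$ is the sum of a diagonal monotone part and a skew part. If the raw $w$-row comes out with the opposite overall sign, I will multiply it by $-1$, which is permitted since it is an equality. I will choose that sign so that the $\cQ$ diagonal block is $+M_{i_q}$, and then check that the induced $\cA$ entries are skew to the $u$-row entries. Should this fail for some choice, I would instead replace the $u$-$w$ coupling in $\cA$ by its skew-adjusted form and move the discrepancy into $\cQ$, verifying that $\cQ$ remains self-adjoint as stated.
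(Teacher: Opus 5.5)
Your proposal is correct and follows the paper's route. You substitute each edge's $w$-step into the parent-link and child-link terms of the $u$-steps, which is exactly the paper's rewriting $0\in A_{i_q}u_{i_q}^{k+1}+M_{i_q}(u_{i_q}^{k+1}-u_{i_q}^{k})-M_{i_q}(2w_{i_q}^{k+1}-w_{i_q}^{k})+\sum_{i_{q+1}}M_{i_{q+1}}w_{i_{q+1}}^{k+1}$, and you also check explicitly that the $w$-rows are $M_{i_q}$ times the $w$-step, where the paper only says ``the result follows''; the sign contingency in your last paragraph is never needed, because your $w$-row computation already gives $\cQ_{w_{i_q}\rightarrow w_{i_q}}=M_{i_q}$, $\cQ_{w_{i_q}\rightarrow u_{i_q}}=-M_{i_q}$, and $\cA$-entries that are exactly the skew counterparts of the stated upper-triangular ones.
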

\begin{proof}
By variable substitutions, the $u_{i_1,\ldots,i_q}$-step of \eqref{proto}  with $m=0$ and $p=0$ becomes
\[
0  \in  A_{ i_{q} } u_{ i_{q} }^{k+1}   + 
M_{ i_{q} } (u_{ i_{q} }^{k+1} - u_{ i_{q} }^{k} )   - M_{ i_{q} }
(2  w_{i_{q} }^{k+1}-   w_{ i_{q} }^{k} ) 
 +  \sum_{i_{q+1} =1}^{ |\cI_{q+1,i_1,\ldots, i_q} | }
M_{ i_{q+1} }  w_{  i_{q+1} }^{k+1}  . 
\]
Then the result follows.
\end{proof}

It has been mentioned in Sect. \ref{sec_dynamical} that $w_{i_q}$ acts as the Lagrangian multiplier for the equality constraint of $u_{i_{q-1}} = u_{i_{q}}$. The standard Lagrangian formulation becomes:
\be  \label{lag}
\cL(u_0,u_i,w_i) =   f_0(u_0) - \langle u_0|a\rangle +\sum_{i=1}^{n-1} f_i (u_i)
+\sum_{q\ge 1} \sum_{i_q\in \cI_{q,i_1,\ldots,i_q-1}}
\langle  M_{i_q} w_{i_q} |  u_{i_{q-1}} - u_{i_{q}} \rangle.
\ee

Notice that the proposed \eqref{lag} is tree-dependent: distinct parent-child relations and dual assignments lead to different Lagrangian formulations.
In addition, the Lagrangian multiplier $w_{i_q}$ is rescaled by the associated metric $M_{i_q}$. The following results show that the our proposed primal-dual gap \eqref{gap_mix} constructed by \eqref{lag} enjoys the ergodic rate of $O(1/k)$.

To this end, we define a quantity as a function of arbitrary $x,x'\in \cH^{2n-1}$:
\be \label{pi}
\Pi (x,x')  = \cL(u_0,u_i,w'_i) - \cL(u'_0, u'_i, w_i),
\ee

\begin{lemma} \label{l_pi}
Under the conditions in Corollary \ref{c_demi},  the following hold $\forall x\in \cH^{2n-1}$:

{\rm (i)} $\Pi (x^{k+1},x)  \le  \big\langle x^{k+1} - x^{k} \big| x  - x^{k+1}  \big\rangle_\cQ $, where $x$ and $\cQ$ are given in Lemma \ref{l_ppm_w};

{\rm (ii)} $\Pi \big( \frac{1}{K} \sum_{k=1}^K x^{k}, x \big)  \le  \frac{1}{2K}    \sum_{i=1}^{n-1}  \big\| 
(  u_{i}^{0} - w_{i}^{0} ) - (u -  w_{i}) \big\|_{M_i  }^2 $.
\end{lemma}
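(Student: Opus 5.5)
We prove (i) from the product-space inclusion of Lemma \ref{l_ppm_w}, $0\in \cA x^{k+1}+\cQ(x^{k+1}-x^k)$, i.e. $-\cQ(x^{k+1}-x^k)\in\cA x^{k+1}$. Read componentwise, this gives subgradient memberships. For the root, $g_0^{k+1}\in\partial f_0(u_0^{k+1})$ and
\[
g_0^{k+1}-a+\sum_{i_1}M_{i_1}w_{i_1}^{k+1}=-[\cQ(x^{k+1}-x^k)]_{u_0}.
\]
For each node $u_{i_q}$,
\[
g_{i_q}^{k+1}-M_{i_q}w_{i_q}^{k+1}+\sum_{i_{q+1}}M_{i_{q+1}}w_{i_{q+1}}^{k+1}=-[\cQ(x^{k+1}-x^k)]_{u_{i_q}},
\]
with $g_{i_q}^{k+1}\in\partial f_{i_q}(u_{i_q}^{k+1})$. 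For the edge $w_{i_q}$,
\[
M_{i_q}\big(u_{i_{q-1}}^{k+1}-u_{i_q}^{k+1}\big)=-[\cQ(x^{k+1}-x^k)]_{w_{i_q}}.
\]
These are exactly the partial (sub)gradients of the tree Lagrangian \eqref{lag} at $x^{k+1}$: the $u$-rows are $\partial_u\cL$ and the $w$-rows are $-\nabla_w\cL$. The sign flip in the $w$-rows comes from the skew part of $\cA$.

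Next we split the gap. Write
\[
\Pi(x^{k+1},x)=\big[\cL(u^{k+1},w)-\cL(u^{k+1},w^{k+1})\big]+\big[\cL(u^{k+1},w^{k+1})-\cL(u,w^{k+1})\big].
\]
The first bracket is linear in $w$, so it equals
\[
\sum_{q,i_q}\big\langle M_{i_q}(w_{i_q}-w_{i_q}^{k+1})\,\big|\,u_{i_{q-1}}^{k+1}-u_{i_q}^{k+1}\big\rangle .
\]
For the second bracket, convexity of each $f_i$ in $u$ with the subgradients $g^{k+1}$ gives
\[
\le \sum_i\big\langle g_i^{k+1}\,\big|\,u_i^{k+1}-u_i\big\rangle+\text{(bilinear terms in $u^{k+1}-u$ with multipliers $w^{k+1}$)}.
\]
Adding the two brackets and substituting the inclusions above turns everything into
\[
\big\langle -\cQ(x^{k+1}-x^k)\,\big|\,x^{k+1}-x\big\rangle=\big\langle x^{k+1}-x^k\,\big|\,x-x^{k+1}\big\rangle_\cQ .
\]
The main bookkeeping obstacle is this step: checking that the tree-indexed bilinear terms $\langle M_{i_q}w|u_{i_{q-1}}-u_{i_q}\rangle$ match the skew entries of $\cA$ exactly, in particular the parent entry $M_{i_q}$ and the child entry $-M_{i_q}$. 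I would verify this edge by edge, since each edge contributes an independent $2\times 2$ skew block coupling $(u_{i_{q-1}},u_{i_q})$ with $w_{i_q}$. Note that monotonicity of $\cA$ alone is not enough; we need convexity of $\cL$ in $u$ and affinity in $w$, which is why we work with the Lagrangian directly.

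For (ii), apply the three-point identity
\[
2\langle x^{k+1}-x^k\,|\,x-x^{k+1}\rangle_\cQ=\|x^k-x\|_\cQ^2-\|x^{k+1}-x\|_\cQ^2-\|x^{k+1}-x^k\|_\cQ^2\le \|x^k-x\|_\cQ^2-\|x^{k+1}-x\|_\cQ^2,
\]
valid because $\cQ\succeq0$. Summing (i) over $k=0,\dots,K-1$ telescopes to $\sum_{k=1}^K\Pi(x^k,x)\le\frac12\|x^0-x\|_\cQ^2$.

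Since $\Pi(\cdot,x)$ is convex in its first argument (convex in $u$, affine in $w$), Jensen's inequality gives
\[
\Pi\Big(\tfrac1K\sum_k x^k,x\Big)\le\tfrac1K\sum_k\Pi(x^k,x).
\]
Finally, the block structure of $\cQ$ in Lemma \ref{l_ppm_w} yields
\[
\|y\|_\cQ^2=\sum_i\|y_{u_i}-y_{w_i}\|_{M_i}^2,
\]
so $\|x^0-x\|_\cQ^2=\sum_i\|(u_i^0-w_i^0)-(u_i-w_i)\|_{M_i}^2$. Taking $u_i=u$ in the test point gives the stated bound. The conditions of Corollary \ref{c_demi} are only needed for well-posedness here, since in this case $\tau,\beta$ play no role.
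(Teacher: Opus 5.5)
Your proposal is correct and is essentially the paper's argument. The paper also applies the subgradient inequality to each $f_i$, with the subgradients read off from $0\in\cA x^{k+1}+\cQ(x^{k+1}-x^k)$, and converts the bilinear terms exactly via the $w$-step into $\cQ_{w}$-terms; for (ii) it uses the three-point identity for the $\cQ$-seminorm, telescoping, and convexity of $\Pi(\cdot,x)$. Your split of $\Pi$ into an affine-in-$w$ bracket and a convex-in-$u$ bracket is just a tidier bookkeeping of the paper's term $E_u$.

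One sign slip needs fixing. The displayed edge relation should read $M_{i_q}(u_{i_{q-1}}^{k+1}-u_{i_q}^{k+1})=+[\cQ(x^{k+1}-x^k)]_{w_{i_q}}$. This follows from your own remark that the $w$-rows of $\cA$ are $-\nabla_w\cL$, and the $w$-step confirms it directly. With the minus sign as written, the two brackets would not combine into $\langle x^{k+1}-x^k\,|\,x-x^{k+1}\rangle_\cQ$.
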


For the given sets $E_1 \subset \cH$, $E_2 \subset \cH^{n-1}$ and $E_3 \subset \cH^{n-1}$, the {\it primal-dual gap} function restricted to $E_1 \times E_2 \times E_3$ is defined as
\be  \label{gap_mix}
\Psi_{E_1 \times E_2 \times E_3 } (x) = \sup_{(w'_i) \in E_3} \cL(u_0,u_i,w'_i) -  \inf_{(u'_0,u'_i) \in E_1\times E_2 } \cL(u'_0,u'_i,w_i),
\ee 
which has the upper bound in an ergodic sense:
\begin{proposition} \label{p_gap}
Under the conditions of Corollary \ref{c_demi}, if the set    $ E_1 \times E_2 \times E_3 $ is bounded, the primal-dual gap defined as \eqref{gap_mix} has the upper bound:
\[
\Psi_{E_1 \times E_2 \times E_3 }   \bigg( \frac{1}{K}  \sum_{k=1}^{K} x^{k}  \bigg)  \le \frac{1}{2K}
\sup_{x \in E_1 \times E_2 \times E_3 } 
\big\| x^0 - x \big\|_\cQ^2.
\] 
Furthermore, $\Psi_{E_1 \times E_2 \times E_3} ( \frac{1}{K}  \sum_{k=1}^{K} x^k  ) \ge 0$, if the set $E_1 \times E_2 \times E_3 $ contains a saddle point $x^\star = (u_0^\star, u_i^\star,  w_i^\star )$, such that $(u_0^\star, u_i^\star,u_i^\star - w_i^\star ) \in \Fix \cT_{uz}$.
\end{proposition}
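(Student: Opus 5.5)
The plan is to derive the bound directly from Lemma \ref{l_pi}-(ii), which bounds $\Pi$ at the ergodic average against an arbitrary test point, and then take suprema over the bounded test set. Write $\bar x^K = \frac{1}{K}\sum_{k=1}^K x^k = (\bar u_0^K,\bar u_i^K,\bar w_i^K)$.

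\textbf{Step 1 (decoupling the gap).} Since $\cL(u_0,u_i,w_i)$ splits as a primal part plus a term that is affine in $w_i$, the definition \eqref{pi} gives, for every test point $x=(u'_0,u'_i,w'_i)\in E_1\times E_2\times E_3$,
\[
\Psi_{E_1 \times E_2 \times E_3}(\bar x^K)=\sup_{x\in E_1\times E_2\times E_3}\Pi(\bar x^K,x).
\]
The reason is that $\sup_{w'}\cL(\bar u,w')-\inf_{u'}\cL(u',\bar w)=\sup_{(u',w')}\big[\cL(\bar u,w')-\cL(u',\bar w)\big]$, because the two optimizations run over independent variables. Boundedness of $E_1\times E_2\times E_3$ guarantees that this supremum is finite once Step 2 is in place.

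\textbf{Step 2 (upper bound).} Applying Lemma \ref{l_pi}-(ii) pointwise gives $\Pi(\bar x^K,x)\le \frac{1}{2K}\sum_{i}\|(u_i^0-w_i^0)-(u'_i-w'_i)\|_{M_i}^2$. This right-hand side should be identified with $\frac{1}{2K}\|x^0-x\|_\cQ^2$. By Lemma \ref{l_ppm_w}, the block $\cQ$ restricted to $(u_{i_q},w_{i_q})$ is $\begin{bmatrix} M&-M\\-M&M\end{bmatrix}$, so $\|y\|_\cQ^2=\sum_i\|y_{u_i}-y_{w_i}\|_{M_i}^2$, and the $u_0$ component carries no weight. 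One point needs attention: the lemma is stated with a single $u$ in place of $u_i$. Either the lemma is read with the test component $u'_i$, or, if its proof genuinely uses $u_0=u_i$, I would repeat its telescoping argument from part (i). That argument sums $\langle x^{k+1}-x^k|x-x^{k+1}\rangle_\cQ=\frac12(\|x^k-x\|_\cQ^2-\|x^{k+1}-x\|_\cQ^2-\|x^{k+1}-x^k\|_\cQ^2)$ over $k$, drops the negative terms, and uses the convexity of $x\mapsto\Pi(x,x')$. Convexity holds because $\Pi$ is convex in $(u_0,u_i)$ and affine in $w_i$. Taking the supremum over $x$ then gives the stated bound.

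\textbf{Step 3 (nonnegativity).} Suppose $x^\star\in E_1\times E_2\times E_3$ is a saddle point, meaning $(u_0^\star,u_i^\star,u_i^\star-w_i^\star)\in\Fix\cT_{uz}$. Then by Fact \ref{f_ppm}-(iii), together with Proposition \ref{p_encoding}, the point $x^\star$ is a zero of the operator $\cA$ of Lemma \ref{l_ppm_w}. This yields $u_0^\star=u_i^\star$ and the subgradient inclusions of \eqref{www} with $m=p=0$. These inclusions are exactly the conditions $0\in\partial_{u}\cL(\cdot,w^\star)$ and $0=\partial_w\cL(u^\star,\cdot)$, so $\cL(u^\star,w')\le\cL(u^\star,w^\star)\le\cL(u',w^\star)$ for all $u',w'$. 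Hence $\Psi(\bar x^K)\ge\Pi(\bar x^K,x^\star)=\cL(\bar u^K,w^\star)-\cL(u^\star,\bar w^K)\ge\cL(u^\star,w^\star)-\cL(u^\star,w^\star)=0$.

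\textbf{Main obstacle.} The delicate step is Step 2: matching the right-hand side of Lemma \ref{l_pi}-(ii) with $\|x^0-x\|_\cQ^2$ for a fully general test point, and checking that the metric rescaling $M_{i_q}w_{i_q}$ in \eqref{lag} is consistent with the skew entries of $\cA$ in Lemma \ref{l_ppm_w}, so that the cross terms cancel exactly. The rest is routine.
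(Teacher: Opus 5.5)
Your proposal is correct and follows essentially the paper's route, which defers to \cite[Theorem 2 and Corollary 2]{fxue_optl}: you rewrite the restricted gap as $\sup_{x\in E_1\times E_2\times E_3}\Pi(\bar x^K,x)$, bound it using Lemma \ref{l_pi}-(ii) (reading the typo $u$ there as $u_i$, so that the right-hand side is exactly $\frac{1}{2K}\|x^0-x\|_\cQ^2$), and obtain nonnegativity from the saddle-point inequalities at $x^\star$. One small citation slip is that Fact \ref{f_ppm}-(iii) concerns the operator $\cA$ of Lemma \ref{l_ppm} in $(u,z)$-coordinates, not the one in Lemma \ref{l_ppm_w}; this is harmless, because what you actually use ($u_0^\star=u_i^\star$ and the system \eqref{www} with $m=p=0$) follows directly from the fixed-point equations of \eqref{uzs}.
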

\begin{proof}
Refer to the proofs of \cite[Theorem 2 and Corollary 2]{fxue_optl}.
\end{proof}

\section{Further discussions}
This work presented a general tree graph with arbitrary dual assignments and cocoercive loading, which automatically leads to unconditionally convergent and frugal splitting algorithms with minimal lifting. There are several future directions of both theoretical and practical interest.

First, it is known from basic linear algebra that a  matrix can be visualized as a directed graph, where the $(i,j)$-th entry represents the edge weight from node $i$ to $j$. From this viewpoint, our reformulation technique proposed in Lemma \ref{l_ppm} is inherently tied to the tree graph depicted in Fig. \ref{fig_whole}: (1) the maximally monotone operator $\cA$ is closely related to the graph Laplacian, adjacency matrix and degree matrix
 \cite{tam_2026,bredies_lift} of the tree graph in Fig. \ref{fig_whole}; (2) The active variables  and its related lifting number can be easily recognized by the range space and rank of the degenerate preconditioner $\cQ$.
This intriguing connection deserves deep investigation.
Furthermore, this reformulation technique can be developed as a unified framework for analyzing most existing splitting methods.

Second, Theorem \ref{t_zs} indicates that the convergence condition depends heavily on the tree structure, dual assignment and cocoercive loading. It would be valuable to optimize the graph topology to allow for larger stepsizes. 

Finally, we only established the ergodic rate of $O(1/k)$ for the pure resolvent splitting. It is interesting to propose some gap or objective function for the general problem \eqref{primal} and the $(u,w,s)$-prototype \eqref{proto}, which also has the ergodic rate of $O(1/k)$.

\section{Acknowledgements}
We are gratefully indebted to Prof. Yuchao Tang (Guangzhou University) for sharing his preprint \cite{tyc_pds} with us, and to Prof. Xiaokai Chang (Lanzhou University of Technology) for his valuable comments.
We did not use AI to develop the ideas or proofs in this paper. AI tools were used to check grammar and typos.

\bibliographystyle{siam}

\small{
\bibliography{refs}
}

\appendix 
\section{Proof of Lemma \ref{l_residual}}
\begin{proof}
We split the residual $\Delta$ as 
 $\Delta =  \Delta_L  +\Delta_C +\Delta_D$,
where $ \Delta_L$, $\Delta_C$, and $\Delta_D$ denote the residuals caused by $L_j^* s_j^k$, $\beta_l$-cocoercive $C_l$ and $\nu_j$-cocoercive $D_j^{-1}$, respectively.  Let us compute each term individually. 

First, $ \Delta_L$ is expressed in terms of levels as 
$ \Delta_L = \sum_{q\ge 0} \Delta_{L,q}$, where
\begin{align*}
\Delta_{L,q} &=  \sum_{i_1=1}^{ |\cI_1|} \cdots 
\sum_{i_q=1}^{ |\cI_{q,i_1,\ldots,i_{q-1}}|}
  \Big\langle \sum_{j \in\cJ_{i_1,\ldots,i_q}} 
  L_{j}^* \big(  (s_{1,j}  -  \stilde_{1,j})  - (s_{2,j}  -  \stilde_{2,j}) 
  \big) \\
  &  +  \sum_{j \in\cJ_{c,i_1,\ldots,i_q}} L_{j}^* 
  \big( (\stilde_{1,j}-  s_{1,j} ) - (\stilde_{2,j}-  s_{2,j}) \big)  
   \Big|  \utilde_{1, i_{q}} - \utilde_{2, i_{q}} \Big\rangle,  
\end{align*}
where the notational conventions follow Remarks \ref{rmk_notation} and \ref{rmk_notation2}, including the case of $q=0$. To compute the summation, we first consider $ \Delta_{L,0}+\Delta_{L,1} $.  Since $\mcup_{i_1=1}^{|\cI_1|}  \cJ_{c,i_1} = \cJ_{0} $,  we have
$\sum_{j \in\cJ_0}  = 
\sum_{i_1=1}^{|\cI_1|} \sum_{j \in\cJ_{c,i_1}} $, and thus,
\[
\begin{aligned}
& \Delta_{L,0} + \Delta_{L,1}  \\
 = & \Big\langle \sum_{i_1=1}^{|\cI_1|} \sum_{j \in\cJ_{c,i_1}} L_{j}^* \big(  (s_{1,j}  -  \stilde_{1,j})  - (s_{2,j}  -  \stilde_{2,j})   \big)  \Big|  \utilde_{1,0} - \utilde_{2,0} \Big\rangle  \\
+ & \sum_{i_1=1}^{ |\cI_1|}
  \Big\langle \sum_{j \in\cJ_{c,i_1}} L_{j}^* 
   \big(  (  \stilde_{1,j} - s_{1,j} )  - (\stilde_{2,j} - s_{2,j})   \big) 
  \Big|   \utilde_{1,i_1} - \utilde_{2,i_1}     \Big\rangle \\
  +& \sum_{i_1=1}^{ |\cI_1|}
  \Big\langle  \sum_{j \in\cJ_{i_1}} L_{j}^* 
  \big(  (s_{1,j}  -  \stilde_{1,j})  - (s_{2,j}  -  \stilde_{2,j})   \big)    
   \Big| \utilde_{1,i_1} - \utilde_{2,i_1} \Big\rangle \\
   = & \sum_{i_1=1}^{|\cI_1|} \Big\langle  \sum_{j \in\cJ_{c,i_1}} L_{j}^* \big(  (s_{1,j}  -  \stilde_{1,j})  - (s_{2,j}  -  \stilde_{2,j})   \big) \Big| 
   ( \utilde_{1,0} - \utilde_{1,i_1})
   - ( \utilde_{2,0} - \utilde_{2,i_1}) \Big\rangle  \\
  + & \underbrace{  \sum_{i_1=1}^{ |\cI_1|}
  \Big\langle  \sum_{j \in\cJ_{i_1}} L_{j}^* 
  \big(  (s_{1,j}  -  \stilde_{1,j})  - (s_{2,j}  -  \stilde_{2,j})   \big)    
   \Big| \utilde_{1,i_1} - \utilde_{2,i_1} \Big\rangle }
   _{: = \Delta_{L, i_1}  } , 
   \end{aligned}
\]
where the term $\Delta_{L, i_1} $ can be combined with some term in $\Delta_{L,2} $, using $\mcup_{i_2=1}^{|\cI_{2,i_1}|} \cJ_{c,i_1, i_{2}}
= \cJ_{i_1} $ and $\sum_{j \in\cJ_{i_1}}  = \sum_{i_2=1}^{|\cI_{2,i_1}|} \sum_{j \in\cJ_{c,i_1,i_2}}  $. 
The same derivation goes for the rest $q\ge 2$,  based on  the general identity
$\mcup_{i_q=1}^{|\cI_{q,i_1,\ldots,i_{q-1}}|} \cJ_{c,i_1,\ldots, i_{q}} = \cJ_{i_1,\ldots, i_{q-1}} $. Finally the sum $\Delta_L^\star$, by induction, becomes:
{\small \[
\begin{aligned}
\Delta_{L}  & =  \sum_{q \ge 1} \bigg(  
\sum_{i_1=1}^{|\cI_1|} \cdots 
\sum_{i_{q}=1}^{|\cI_{q,i_1,\ldots,i_{q-1}}|} 
\Big\langle  \sum_{j \in\cJ_{c,i_1,\ldots,i_{q}}} L_{j}^* \big(  (s_{1,j}  -  \stilde_{1,j})  - (s_{2,j}  -  \stilde_{2,j})   \big) 
\Big|  ( \utilde_{1,i_{q-1}} - \utilde_{1, i_q})
   - ( \utilde_{2, i_{q-1}} - \utilde_{2, i_q})
\Big\rangle \bigg) \\
& =  \sum_{q \ge 1} \bigg(  
\sum_{i_1=1}^{|\cI_1|} \cdots 
\sum_{i_{q}=1}^{|\cI_{q,i_1,\ldots,i_{q-1}}|} 
\Big\langle  \sum_{j \in\cJ_{c,i_1,\ldots,i_{q}}} L_{j}^* 
\big(  (s_{1,j}  -  \stilde_{1,j})  - (s_{2,j}  -  \stilde_{2,j})   \big)  \Big| (z_{1, i_q}  - \ztilde_{1, i_q} )
- (z_{2, i_q}  - \ztilde_{2, i_q} )  \Big\rangle \bigg)
\quad \textrm{by $z_i$-step of \eqref{proto_T_relax}}  \\
& = \sum_{j=1}^{m} \Big\langle  L_{j}^*
\big(  (s_{1,j}  -  \stilde_{1,j})  - (s_{2,j}  -  \stilde_{2,j})   \big) \Big| (z_{1, i}  - \ztilde_{1, i} )
- (z_{2, i}  - \ztilde_{2, i} )  \Big\rangle 
\quad \textrm{where the associated $i\in [n-1]$ is such that $j \in\cJ_{c,i}$}  \\
& \ge - \sum_{j=1}^m \bigg( \frac{1}{4 \tau_{j} }
 \Big\|  L_{j}^* \big(  (s_{1,j}  -  \stilde_{1,j})  - (s_{2,j}  -  \stilde_{2,j})   \big)  \Big\|^2 \bigg)  
- \sum_{\substack{ i\in [n-1] \\ \cJ_{c,i} \ne\varnothing}} 
\bigg( \Big( \underbrace{   \sum_{j\in \cJ_{c,i}} \tau_j }
_{:=\tau_i} \Big)  \Big\|   (z_{1, i}  - \ztilde_{1, i} )
- (z_{2, i}  - \ztilde_{2, i} ) \Big\|^2  \bigg) \\
& \ge - \sum_{j=1}^m \bigg( \frac{\|  L_{j} \|^2 } {4 \tau_{j} }
 \big\|  (s_{1,j}  -  \stilde_{1,j})  - (s_{2,j}  -  \stilde_{2,j})   \big\|^2 \bigg)  
- \sum_{i=1}^{n-1} \Big( \tau_i \big\|   (z_{1, i}  - \ztilde_{1, i} ) - (z_{2, i}  - \ztilde_{2, i} ) \big\|^2  \Big) , \\
\end{aligned}
\]}%
where the last equality follows from rearranging the summation according to the index  $j$. Note that  $\tau_i=0$, whenever $\cJ_{c,i} = \varnothing$.

Second, we consider $\Delta_C$. 
Denoting  $C_{i_{q}} := C_{i_1,\ldots,i_{q}} := \sum_{l \in\cP_{i_1,\ldots,i_{q}}} C_l $ for brevity, then 
\[
\begin{aligned}
\Delta_{C}  & =    \sum_{q\ge 1} \bigg(  \sum_{i_1=1}^{ |\cI_1|} \ldots 
   \sum_{i_{q}=1}^{ |\cI_{q,i_1,\ldots,i_{q-1}}|}
  \Big\langle C_{i_{q}} \utilde_{1,i_{q-1}} - 
  C_{i_{q}} \utilde_{2,i_{q-1}} 
   \Big|  \utilde_{1,i_{q}} - \utilde_{2,i_{q}} \Big\rangle \bigg) \\
   & \ge    -  \sum_{q\ge 1}  \bigg(  \sum_{i_1=1}^{ |\cI_1|} \ldots    \sum_{i_{q}=1}^{ |\cI_{q,i_1,\ldots,i_{q-1}}|}
   \frac{1}{4\beta_{i_{q}}}    
  \big\|(\utilde_{1,i_{q-1}} - \utilde_{1,i_{q}})
  -(\utilde_{2,i_{q-1}} - \utilde_{2,i_{q}})  \big\|^2 \bigg)  \quad \textrm{by Fact \ref{f_coco}-(ii)}\\
& =   -\sum_{\substack{ i\in [n-1] \\ \cP_{i} \ne\varnothing}} \frac{1}{4 \beta_{i} }  \Big\| 
  (z_{1, i}  - \ztilde_{1, i} ) - (z_{2, i}  - \ztilde_{2, i} ) \Big\|^2  \\
& =   -\sum_{i=1}^{n-1} \frac{1}{4 \beta_{i} }  \Big\| 
  (z_{1, i}  - \ztilde_{1, i} ) - (z_{2, i}  - \ztilde_{2, i} ) \Big\|^2 ,  \\
\end{aligned}
\]
where $\beta_i$ denotes the cocoercivity of $C_i = \sum_{l\in \cP_i} C_l$ that are loaded on the non-root node $u_i$, and is given as $\beta_i = \frac{1}{\sum_{l\in\cP_i} \beta_l^{-1}}$ by Fact \ref{f_coco}-(i). We set $\frac{1}{4 \beta_{i} } = 0$, whenever $\cP_i =\varnothing$ in the last equality.

Third, consider $\Delta_D$. Since $D_j^{-1}$ is $\nu_j$-cocoercive, $\Delta_D$, by Fact \ref{f_coco}-(ii), is given as 
\[
\begin{aligned}
\Delta_{D}  &=   \sum_{j=1}^m
  \Big\langle  D_j^{-1} s_{1,j} - D_j^{-1} s_{2,j}  
   \Big|  \stilde_{1,j} - \stilde_{2,j} \Big\rangle 
  \ge - \sum_{j=1}^m \frac{1}{4\nu_j} 
   \big\|  (s_{1,j}  -  \stilde_{1,j})  - (s_{2,j}  -  \stilde_{2,j})    \big\|^2 . \\
\end{aligned}
\]
In particular, we set $\frac{1}{4\nu_j} =0$ if $D_j = \cN_{\{0\}} $ such that $B_j \square D_j = B_j$, because this $D_j^{-1}$ by definition corresponds to $\nu_j = \infty$.
Finally, the total residual $\Delta$ is summarized as Lemma \ref{l_residual}.
\end{proof}

\section{Proof of Proposition \ref{p_ine}}
\begin{proof}
To cope with the residual term, the proof is slightly modified and adapted from  \cite[Sect. 2]{fxue_coam},  \cite[Lemma 1]{fxue_optl} and \cite[Theorem 6.1]{hbs_yxm_2015}. Using Lemma \ref{l_ppm}, we develop for any $x_1, x_2 \in \cH^{2n-1} \times \prod_{j=1}^m \cK_j$ that 
\begin{align*}
0 & \le  \big\langle \cA \xtilde_1 -\cA \xtilde_2  \big| 
\xtilde_1 -  \xtilde_2  \big\rangle   \\
&=  \big\langle (x_1 - \xtilde_1) -  (x_2 - \xtilde_2) \big| \xtilde_1 - \xtilde_2  \big\rangle_ \cQ
 - \underbrace{ \big\langle \cR x_1  -  \cR x_2  \big| \xtilde_1 - \xtilde_2 \big\rangle}_\textrm{ $= \Delta$ defined in \eqref{delta}} .
\end{align*}
Adding $\big\|  (x_1 - \xtilde_1) -  (x_2 - \xtilde_2) \big\|_\cQ^2$ on both sides yields
\[
\Delta + \big\|  (x_1 - \xtilde_1) -  (x_2 - \xtilde_2) \big\|_\cQ^2
\le  \big\langle  x_2 - x_1    \big| 
  (x_2 - \xtilde_2) -  (x_1 - \xtilde_1)\big\rangle_\cQ.
\]
By the structure of $\cQ$ and the relaxation of \eqref{proto_T_relax}, the above inequality becomes:
\begin{align} \label{aa}
& \Delta + \sum_{i=1}^{n-1} \big\| ( z_{1,i} - \ztilde_{1,i})  -  (z_{2,i} -\ztilde_{2,i} )  \big\|_{M_i}^2
+ \sum_{j=1}^{m} \big\| ( s_{1,j} - \stilde_{1,j})  -  (s_{2,j} -\stilde_{2,j} )  \big\|_{N_j}^2 \nonumber \\
 = & \Delta + \sum_{i=1}^{n-1} \frac{1}{\theta_i^2}
 \big\| ( z_{1,i} -  z_{1,i}^+)  -  (z_{2,i}  - z_{2,i}^+ )  \big\|_{M_i}^2
+ \sum_{j=1}^{m} \frac{1}{\zeta_j^2} \big\| ( s_{1,j} - s_{1,j}^+)  -  (s_{2,j} - s_{2,j}^+ )  \big\|_{N_j}^2 \nonumber \\
\le &  \sum_{i=1}^{n-1}  \big\langle  z_{2,i} - z_{1,i}    \big| 
  (z_{2,i} -\ztilde_{2,i})  -  ( z_{1,i}-\ztilde_{1,i}) \big\rangle_{M_i}
  + \sum_{j=1}^{m}  \big\langle   s_{2,j} - s_{1,j}     \big| 
  (s_{2,j} -\stilde_{2,j})  -  ( s_{1,j}- \stilde_{1,j}) \big\rangle_{N_j} \nonumber  \\
= &  \sum_{i=1}^{n-1} \frac{1}{\theta_i} 
\big\langle z_{2,i} - z_{1,i}  \big| 
  (z_{2,i} -z_{2,i}^+)  -  ( z_{1,i}-z_{1,i}^+ )\big\rangle_{M_i}
  + \sum_{j=1}^{m}  \frac{1}{\zeta_j} 
    \big\langle   s_{2,j} - s_{1,j}   \big| 
  (s_{2,j} - s_{2,j}^+)  -  ( s_{1,j}- s_{1,j}^+) \big\rangle_{N_j}.
\end{align}

On the other hand, based on the identity $\|a\|^2-\|b\|^2 = 2\langle a|a-b\rangle - \|a-b\|^2$, we consider
\begin{align*}
& \sum_{i=1}^{n-1} \frac{1}{\theta_i}  \Big( \big\|  z_{2,i} - z_{1,i} \big\|_{M_i}^2 - \big\|  z_{2,i}^+ - z_{1,i}^+\big\|_{M_i}^2 \Big)
+ \sum_{j=1}^{m} \frac{1}{\zeta_j} \Big( \big\|  s_{2,j} - s_{1,j} \big\|_{N_j}^2
- \big\|  s_{2,j}^+ - s_{1,j}^+  \big\|_{N_j}^2 \Big) \\
= &  \sum_{i=1}^{n-1} \frac{2}{\theta_i} \big\langle 
 z_{2,i} - z_{1,i}   \big|
( z_{2,i} - z_{2,i}^+  ) - ( z_{1,i} - z_{1,i}^+ ) \big\rangle_{M_i}
-  \sum_{i=1}^{n-1} \frac{1}{\theta_i} \big\|
( z_{2,i} - z_{2,i}^+  ) - ( z_{1,i} - z_{1,i}^+ ) \big\|_{M_i}^2 \\
+ & \sum_{j=1}^{m} \frac{2}{\zeta_j} \big\langle s_{2,j} - s_{1,j} \big|
( s_{2,j} - s_{2,j}^+  ) - ( s_{1,j} - s_{1,j}^+ )  \big\rangle_{N_j}
- \sum_{j=1}^{m} \frac{1}{\zeta_j} \big\|
( s_{2,j} - s_{2,j}^+  ) - ( s_{1,j} - s_{1,j}^+ )  \big\|_{N_j}^2 \\
\ge & 2\Delta+  \sum_{i=1}^{n-1} \Big( \frac{2}{\theta_i^2}
-\frac{1}{\theta_i} \Big) \big\| ( z_{2,i} - z_{2,i}^+  ) - ( z_{1,i} - z_{1,i}^+ )\big\|_{M_i}^2 
+  \sum_{j=1}^{m} \Big( \frac{2}{\zeta_j^2}
-\frac{1}{\zeta_j} \Big)  \big\|( s_{2,j} - s_{2,j}^+  ) - ( s_{1,j} - s_{1,j}^+ ) \big\|_{N_j}^2 ,
\end{align*}
where the last inequality is due to \eqref{aa}.
Finally, substituting $\Delta$ of Lemma \ref{l_residual} into above completes the proof.
\end{proof}

\section{Proof of Theorem \ref{t_zs}}
\begin{proof}
(i) The proof follows a standard procedure of \cite[Theorem 5.1]{fxue_jota}:

Step-1: For every $(z_i^\star, s_j^\star) \in \Fix \cT_{zs}  $, $\lim_{k\rightarrow \infty} \sum_{i=1}^{n-1} \frac{1}{\theta_i} \big\| 
  z_{i}^{k} - z_{i}^\star \big\|_{M_i }^2 
+ \sum_{j=1}^m   \frac{1}{\zeta_j} \big\|   s_{j}^{k} -  s_{j}^\star  \big\|_{N_j }^2   $ exists;  

Step-2: $\big\{ (z_i^k, s_j^k) \big\}_{k\in\N}$ has at least  one weak sequential cluster point lying in $ \Fix \cT_{zs}$;

Step-3: The weak sequential  cluster point of  $\big\{ (z_i^k, s_j^k) \big\}_{k\in\N}$  is unique.

\vskip.2cm
Step-1: Since $\Fix \cT_{zs,\theta,\zeta}\ne\varnothing$ by Corollary \ref{c_existence}, letting $x_1 = x^k$ and $x_2 = x^\star \in \Fix \cT_{uzs}$ in Proposition \ref{p_ine}, and noting $x_1^+ = x^{k+1}$ and $\xtilde_2 = x_2^+ = x_2$, one obtains:
\be  \label{aqw}
  E_k - E_{k+1}  \ge  \sum_{i=1}^{n-1} \frac{1}{\theta_i^2} \big\|   z_{i}^{k+1} - z_{i}^k \big\|_{
   (2-\theta_i) M_i - ( 2\tau_i  + \frac{1}{2 \beta_{i} } ) I }^2  
+  \sum_{j=1}^{m} \frac{1}{\zeta_j^2} \big\| 
  s_{j}^{k+1} - s_{j}^k \big\|_{
   (2- \zeta_j) N_j - (\frac{\|  L_{j} \|^2 } {2 \tau_{j} } +
     \frac{1}{2 \nu_j} ) I  }^2, 
\ee
where $E_k =  \sum_{i=1}^{n-1} \frac{1}{\theta_i} \big\| 
  z_{i}^{k} - z_{i}^\star \big\|_{M_i }^2 
+ \sum_{j=1}^m   \frac{1}{\zeta_j} \big\|   s_{j}^{k} -  s_{j}^\star  \big\|_{N_j }^2 $. This shows that the sequence $\{E_k\}_{k\in\N} $ is  non-increasing under the conditions of Corollary \ref{c_demi}, and bounded from below (non-negative), and thus its limit exists.

Step-2: The sequence $\big\{ (z_i^k, s_j^k)\big\}_{k\in\N}$ is bounded by \eqref{aqw}, and then, it must possess at least one weak sequential cluster point $(z_i^*, s_j^*)$. It implies that  there exists a subsequence  $\big\{ (z_i^{k_l}, s_j^{k_l}) \big\}_{k\in\N}$\footnote{The symbol $l$ always stands for the index of cocoercive $C_l$ in \eqref{p} throughout this paper. We borrow $l$ for the moment (in this proof) to denote the subsequence index.} that weakly converges to $(z_i^*, s_j^*)$, as $k_l\rightarrow \infty$. 
We here need to show that $ (z_i^*, s_j^*) \in \Fix \cT_{zs}$, and more generally,  every weak sequential cluster point of  
$\{ (z_i^k, s_j^k) \}_{k\in\N}$ belongs to $ \Fix \cT_{zs} $. To this end, summing up \eqref{aqw} from $k=0$ to $K-1$, and taking $K \rightarrow \infty$, we have
\begin{align}  \label{bb}
&  \sum_{k=0}^{\infty} \bigg( 
\sum_{i=1}^{n-1} \frac{1}{\theta_i^2} \big\| 
  z_{i}^{k+1} - z_{i}^k \big\|_{
   (2-\theta_i) M_i - ( 2\tau_i  + \frac{1}{2 \beta_{i} } ) I }^2  
+  \sum_{j=1}^{m} \frac{1}{\zeta_j^2} \big\| 
  s_{j}^{k+1} - s_{j}^k \big\|_{
   (2- \zeta_j) N_j - (\frac{\|  L_{j} \|^2 } {2 \tau_{j} } +
     \frac{1}{2 \nu_j} ) I  }^2 \bigg)   \nonumber \\
& \le   E_0   < +\infty.
\end{align}
Under the conditions in Corollary \ref{c_demi}, \eqref{bb}  implies that  $  (z_i^k - z_i^{k+1}, s_j^k - s_j^{k+1})  = (\cI -   \cT_{zs,\theta,\zeta})  (z_i^k, s_j^k ) \rightarrow  0$, and also $ (\cI -   \cT_{zs,\theta,\zeta}) (z_i^{k_l}, s_j^{k_l} )  \rightarrow  0$ as $k_l \rightarrow \infty$. 
Combining with $(z_i^{k_l}, s_j^{k_l})  \weak (z_i^*, s_j^*) $ and the 
$(\frac{1}{\theta_i} M_i, \frac{1}{\zeta_j}  N_j)$-demiclosedness of $\cI -  \cT_{zs,\theta,\zeta}$ (shown in Corollary \ref{c_demi}), we obtain $ (\cI - \cT_{zs,\theta,\zeta})  (z_i^*, s_j^*) = 0$, i.e., $ (z_i^*, s_j^*) \in \Fix  \cT_{zs,\theta,\zeta}$. 
Since $\big\{ (z_i^{k_l}, s_j^{k_l}) \big\}_{k_l\in\N}$ is an arbitrary weakly convergent subsequence, we conclude that every weak sequential cluster point of $\{ (z_i^{k }, s_j^{k }) \}_{k \in\N}$ lies in $\Fix \cT_{zs,\theta,\zeta} $. 

\vskip.2cm
Step-3: We need to show that $\big\{ (z_i^{k }, s_j^{k })  \big\}_{k \in\N}$ cannot have two distinct weak sequential cluster point in $ \Fix \cT_{zs,\theta,\zeta}$. Indeed, let $ (z_{1,i}^*, s_{1,j}^*),  (z_{2,i}^*, s_{2,j}^*) \in \Fix \cT_{zs,\theta,\zeta} $ be two cluster points of  $\big\{ (z_i^{k }, s_j^{k }) \big\}_{k \in\N}$. Since  $\lim_{k\rightarrow \infty} E_k $  exists as proved in Step-1, set $\xi_1 = \lim_{k\rightarrow \infty} \sum_{i=1}^{n-1} \frac{1}{\theta_i} \big\| 
  z_{i}^{k} - z_{1,i}^*  \big\|_{M_i }^2 
+ \sum_{j=1}^m   \frac{1}{\zeta_j} \big\|   s_{j}^{k} -  s_{1,j}^*  \big\|_{N_j }^2 $, and  $\xi_2 = \lim_{k\rightarrow \infty} \sum_{i=1}^{n-1} \frac{1}{\theta_i} \big\| 
  z_{i}^{k} - z_{2,i}^*  \big\|_{M_i }^2 
+ \sum_{j=1}^m   \frac{1}{\zeta_j} \big\|   s_{j}^{k} -  s_{2,j}^*  \big\|_{N_j }^2 $. 
Take a subsequence $\big\{ (z_i^{k_l}, s_j^{k_l}) \big\}_{k \in\N}$ weakly converging to  $ (z_{1,i}^*, s_{1,j}^*) $, as $k_l \rightarrow \infty$. From the identity $\|a-b\|^2 - \|a-c\|^2 = \|b-c\|^2 +2\langle b-c| c-a\rangle$, we have:
\begin{align*}
&\sum_{i=1}^{n-1} \frac{1}{\theta_i} \Big( \big\| 
  z_{i}^{k_l} - z_{1,i}^*  \big\|_{M_i }^2 
  - \big\|   z_{i}^{k_l} - z_{2,i}^*  \big\|_{M_i }^2 \Big)
+ \sum_{j=1}^m   \frac{1}{\zeta_j} \Big( 
 \big\|   s_{j}^{k_l} -  s_{1,j}^*  \big\|_{N_j }^2  -
  \big\|   s_{j}^{k_l} -  s_{2,j}^*  \big\|_{N_j }^2 \Big)\\
= & \sum_{i=1}^{n-1} \frac{1}{\theta_i}  \big\| 
   z_{1,i}^* - z_{2,i}^*  \big\|_{M_i }^2 
+ \sum_{j=1}^m   \frac{1}{\zeta_j}  
 \big\|   s_{1,j}^*  -  s_{2,j}^*  \big\|_{N_j }^2   \\
+ & 2 \sum_{i=1}^{n-1} \frac{1}{\theta_i}  \big\langle   
   z_{1,i}^* - z_{2,i}^*   \big| 
   z_{2,i}^* - z_{i}^{k_l} \big\rangle_{M_i}
+ 2 \sum_{j=1}^{m} \frac{1}{\zeta_j}  \big\langle   
   s_{1,j}^* - s_{2,j}^*   \big| 
   s_{2,j}^* - s_{j}^{k_l} \big\rangle_{N_j}.
\end{align*}
Taking $k_l \rightarrow \infty$ on both sides, the last two terms become $ -2 \big(  \sum_{i=1}^{n-1} \frac{1}{\theta_i}  \big\|    z_{1,i}^* - z_{2,i}^*  \big\|_{M_i }^2 
+ \sum_{j=1}^m   \frac{1}{\zeta_j}  
 \big\|   s_{1,j}^*  -  s_{2,j}^*  \big\|_{N_j }^2 \big)$, then we deduce that  $\xi_1  - \xi_2  = - \sum_{i=1}^{n-1} \frac{1}{\theta_i}  \big\|    z_{1,i}^* - z_{2,i}^*  \big\|_{M_i }^2 
- \sum_{j=1}^m   \frac{1}{\zeta_j}  \allowbreak
 \big\|   s_{1,j}^*  -  s_{2,j}^*  \big\|_{N_j }^2$. Similarly, take another subsequence $\{  x^{k_{l'}} \}_{k \in\N}$ weakly converging to   $ (z_{2,i}^*, s_{2,j}^*)$, which finally yields that $\xi_1  - \xi_2  = \sum_{i=1}^{n-1} \frac{1}{\theta_i}  \big\|    z_{1,i}^* - z_{2,i}^*  \big\|_{M_i }^2 
+ \sum_{j=1}^m   \frac{1}{\zeta_j}  
 \big\|   s_{1,j}^*  -  s_{2,j}^*  \big\|_{N_j }^2$.  Consequently, $\sum_{i=1}^{n-1} \frac{1}{\theta_i}  \big\|    z_{1,i}^* - z_{2,i}^*  \big\|_{M_i }^2 
+ \sum_{j=1}^m   \frac{1}{\zeta_j}  
 \big\|   s_{1,j}^*  -  s_{2,j}^*  \big\|_{N_j }^2 = 0$, then the conditions in Corollary \ref{c_demi} imply that $ z_{1,i}^* = z_{2,i}^*$ and $ s_{1,j}^*  =  s_{2,j}^* $. This shows the uniqueness of the weak sequential cluster point.

Finally, combining the above 3 steps and \cite[Lemma 2.46]{plc_book}, we conclude the weak convergence of $\big\{ (z_i^{k }, s_j^{k }) \big\}_{k \in\N}$, and denote the weak limit as $(z_i^\star, s_j^\star ) \in \Fix \cT_{zs,\theta,\zeta} = \Fix \cT_{zs}$ by Fact \ref{f_T}. 

(ii)  Since $(z_i^\star, s_j^\star ) \in \Fix \cT_{zs}$, the proof is completed by  Proposition \ref{p_encoding}-(i).
\end{proof}

\section{Proof of Proposition \ref{p_rate}}
\begin{proof}
Letting $x_1 = x^k$ and $x_2 = x^{k-1}$ in Proposition \ref{p_ine} and noting that  $x_1^+ = x^{k+1}$ and $x_2^+ = x^{k}$, we obtain:
\begin{align} \label{zy}
& \sum_{i=1}^{n-1} \frac{1}{\theta_i}  \big( \big\| z_i^{k-1} -z_i^{k} \big\|_{M_i}^2 - \big\| z_i^{k} -z_i^{k+1} \big\|_{M_i}^2 \big)
+ \sum_{j=1}^{m} \frac{1}{\zeta_j} \big( \big\| s_j^{k-1} - s_j^{k} \big\|_{N_j}^2
- \big\| s_j^{k} - s_j^{k+1} \big\|_{N_j}^2 \big) \nonumber \\
\ge &   \sum_{i=1}^{n-1}  \frac{1}{\theta_i^2}
  \big\| z_i^{k-1} - z_i^{k} - ( z_i^{k} - z_i^{k+1} ) \big\|_{ (2-\theta_i) M_i - ( 2\tau_i  + \frac{1}{2 \beta_{i} } ) I }^2 
  \nonumber \\
+&  \sum_{j=1}^{m}  \frac{1}{\zeta_j^2}
 \big\| s_j^{k-1} - s_j^{k} - ( s_j^{k} - s_j^{k+1} ) \big\|_{(2- \zeta_j) N_j - (\frac{\|  L_{j} \|^2 } {2 \tau_{j} } +
     \frac{1}{2 \nu_j} ) I}^2.
\end{align}     

On the other hand, summing up \eqref{aqw} from $k=0$ to $K-1$ yields
\begin{align*} 
& \sum_{i=1}^{n-1} \frac{1}{\theta_i} \big\| 
  z_{i}^{0} - z_{i}^\star \big\|_{M_i }^2 
+ \sum_{j=1}^m   \frac{1}{\zeta_j} \big\|   s_{j}^{0} -  s_{j}^\star  \big\|_{N_j }^2 \\
 \ge  &  \sum_{k=0}^{K-1} \bigg(
\sum_{i=1}^{n-1} \frac{1}{\theta_i^2} \big\| 
  z_{i}^{k+1} - z_{i}^k \big\|_{
   (2-\theta_i) M_i - ( 2\tau_i  + \frac{1}{2 \beta_{i} } ) I }^2  
+  \sum_{j=1}^{m} \frac{1}{\zeta_j^2} \big\| 
  s_{j}^{k+1} - s_{j}^k \big\|_{
   (2- \zeta_j) N_j - (\frac{\|  L_{j} \|^2 } {2 \tau_{j} } +
     \frac{1}{2 \nu_j} ) I  }^2   \bigg) \\ 
\ge   & (\xi-1) \cdot \sum_{k=0}^{K-1} \bigg(
\sum_{i=1}^{n-1} \frac{1}{\theta_i }   \big\| 
  z_{i}^{k+1} - z_{i}^k \big\|_{ M_i  }^2  
+  \sum_{j=1}^{m} \frac{1}{\zeta_j }  \big\| 
  s_{j}^{k+1} - s_{j}^k \big\|_{  N_j  }^2   \bigg) \\
\ge   & (\xi-1) \cdot K
 \bigg( \sum_{i=1}^{n-1} \frac{1}{\theta_i }   \big\| 
  z_{i}^{K} - z_{i}^{K-1} \big\|_{ M_i  }^2  
+  \sum_{j=1}^{m} \frac{1}{\zeta_j }  \big\| 
  s_{j}^{K} - s_{j}^{K-1} \big\|_{  N_j  }^2   \bigg) ,
\end{align*}
where the last inequality is due to the fact that the sequence $
\big\{ \sum_{i=1}^{n-1} \frac{1}{\theta_i }   \big\| 
  z_{i}^{k+1} - z_{i}^k \big\|_{ M_i  }^2  
+  \sum_{j=1}^{m} \frac{1}{\zeta_j }  \big\| 
  s_{j}^{k+1} - s_{j}^k \big\|_{  N_j  }^2 \big\}_{k\in\N} $ is non-increasing by \eqref{zy}, which also proves the rate of $o(1/k)$. 
\end{proof}

\section{Proof of Lemma \ref{l_rho}}
\begin{proof}
Denote $ (u_{1,0}^+, u_{1,i}^+, z_{1,i}^+ ) = \cT_{uz,\theta} (u_{1,0}, u_{1,i}, z_{1,i} )$ and   
$(u_{2,0}^+, u_{2,i}^+, z_{2,i}^+ )  = \cT_{uz,\theta} (u_{2,0},
\allowbreak  u_{2,i}, z_{2,i} )$, where $ \cT_{uz,\theta}$ is given as \eqref{pure_drs}.
Based on the boundedness and closedness of each $M_i$, and the $z_{i_q}$-step of \eqref{pure_drs}, Lemma \ref{l_rho} is essentially equivalent to the existence of $\rho_s$, such that $\|u_{1,s}^+ - u_{2,s}^+\|^2 \le \rho_s \cdot \sum_{i=1}^{n-1} 
\big\| z_{1,i} - z_{2,i} \big\|_{M_i}^2 $, $\forall s\in \{0\} \mcup [n-1]$. Then, we aim to show the equivalent statement that $\exists \rho_{i_q} >0$, such that $\|u_{1,i_q}^+ - u_{2, i_q}^+\|^2 \le \rho_{i_q} \cdot \sum_{i=1}^{n-1} 
\big\| z_{1,i} - z_{2,i} \big\|_{M_i}^2 $, $\forall q\ge 0$. 

Based on Fact \ref{f_res}, we obtain 
\[
\big\| u_{ 1,0 }^+ - u_{ 2,0 }^+ \big\|_{S_0}^2 \le 
\Big\| \sum_{i_{1} =1}^{ |\cI_{1} | } M_{i_1 } \big(
 z_{1, i_1 }   -  z_{2, i_1 } \big) \Big\|_{S_{0}^{-1}}^2
  \le  \rho_0 \cdot \sum_{i_{1} =1}^{ |\cI_{1} | }
\Big\|  z_{1, i_1 } -  z_{2, i_1 } \Big\|_{S_{0}^{-1}}^2 , 
\]
and
\begin{align*}
& \big\| u_{ 1, i_q }^+ - u_{ 2, i_q }^+ \big\|_{S_{i_q}}^2  \\
\le & \Big\| M_{ i_{q} } \big( 2 u_{1, i_{q-1} }^+  -2 u_{2, i_{q-1} }^+ -z_{1, i_{q} }   + z_{2, i_{q} } \big)  +  \sum_{i_{q+1} =1}^{ |\cI_{q+1,i_1,\ldots, i_q} | }
M_{ i_{q+1} }  \big( z_{1, i_{q+1} }   -  z_{2, i_{q+1} } \big) 
 \Big\|_{S_{i_q}^{-1}}^2 \\
\le &  \rho_{i_q}  \cdot  \Big( 
 \big\|    u_{1, i_{q-1} }^+  -  u_{2, i_{q-1} }^+ \big\|_{S_{i_q}^{-1}}^2  + \big\| z_{1, i_{q} }   - z_{2, i_{q} } \big\|_{S_{i_q}^{-1}}^2  +   \sum_{i_{q+1} =1}^{ |\cI_{q+1,i_1,\ldots, i_q} | } 
 \big\|   z_{1, i_{q+1} }   -  z_{2, i_{q+1} }  \big\|_{S_{i_q}^{-1}}^2 \Big), 
\end{align*}
where $\rho_0 =  \sum_{i_{1} =1}^{ |\cI_{1} | }  \| M_{i_1 } \| _{S_{0}^{-1}}^2$ and $\rho_{i_q} = 5\big\| M_{ i_{q} }  \big\|_{S_{i_q}^{-1}}^2
+ \sum_{i_{q+1} =1}^{ |\cI_{q+1,i_1,\ldots, i_q} | }
\big\| M_{ i_{q+1} }\big\|_{S_{i_q}^{-1}}^2$. 
Since the tree has only finite number of nodes, and each $M_i$ is uniformly positive definite and bounded, all the metric norms, such as $\|\cdot\|_{S_i}$, $\|\cdot\|_{S_i^{-1}}$ and $\|\cdot\|_{M_i}$, are equivalent to the ordinary norm $\|\cdot\|$. Then  the first term  $\big\|    u_{1, i_{q-1} }^+  -  u_{2, i_{q-1} }^+ \big\|_{S_{i_q}^{-1}}^2 $ can be recursively traced back (from leaves to root) to and upper bounded by  $\rho' \cdot\big\| u_{ 1,0 }^+ - u_{ 2,0 }^+ \big\|_{S_0}^2$ for some $\rho'>0$, according to the iteration and the ancestral line of tree structure. In all, one can always find some $\rho$, such that Lemma \ref{l_rho} holds.
\end{proof}

\section{Proof of Lemma \ref{l_pi}}
\begin{proof}
(i) Based on the convexities of $\tilde{f}_0 = f_0 -\langle \cdot|a \rangle$ and $f_i$ for $i\in [n-1]$ and Lemma \ref{l_ppm_w}, we have
\begin{align} \label{f_0}
\tilde{f}_0(u_0) & \ge \tilde{f}_0(u_0^{k+1}) +\big\langle \partial \tilde{f}_0(u_0^{k+1}) \big| u_0 -  u_0^{k+1} \big\rangle \nonumber \\
&= \tilde{f}_0(u_0^{k+1}) -\Big\langle \sum_{i_1 \in \cI_1} M_{i_1} w_{i_1}^{k+1}   \Big| u_0 -  u_0^{k+1} \Big\rangle
 - \big\langle \cQ_{u_0}  (x^{k+1}- x^{k})   \big| u_0 -  u_0^{k+1} \big\rangle ,
\end{align}
and $\forall q\ge 1$,
\begin{align} \label{f_i}
f_{i_q}(u_{i_q}) & \ge f_{i_q} (u_{i_q}^{k+1}) +\big\langle \partial f_{i_q} (u_{i_q}^{k+1}) \big| u_{i_q}  -  u_{i_q}^{k+1} \big\rangle \nonumber \\
&= f_{i_q}(u_{i_q}^{k+1}) -\Big\langle \sum_{i_{q+1} \in \cI_{q+1,i_1,\ldots,i_q}} M_{i_{q+1}} w_{i_{q+1}}^{k+1}   \Big| u_{i_q}  -  u_{i_q}^{k+1} \Big\rangle + \Big\langle  M_{i_{q}} w_{i_{q}}^{k+1}   \Big| u_{i_q}  -  u_{i_q}^{k+1} \Big\rangle \nonumber \\
& - \big\langle \cQ_{u_{i_q}}  (x^{k+1}- x^{k})  \big| u_{i_q}  -  u_{i_q}^{k+1} \big\rangle ,
\end{align}
where $\cQ_{u_0}$ and $\cQ_{u_{i_q}}$ stand for the rows of $\cQ$ corresponding to $u_0$ and $u_{i_q}$, respectively.

Substituting \eqref{f_0} and \eqref{f_i} into \eqref{pi} yields 
\begin{align} \label{ws}
\Pi (x^{k+1},x)  = & \cL(u_0^{k+1},u_i^{k+1},w_i  ) - \cL(u_0 ,u_i ,w_i^{k+1} )  \nonumber \\
 = & \tilde{f}_0(u_0^{k+1}) +\sum_{i=1}^{n-1} f_i (u_i^{k+1})
- \tilde{f}_0(u_0) - \sum_{i=1}^{n-1} f_i (u_i)  \nonumber \\
+& \sum_{q\ge 1} \sum_{i_q\in \cI_{q,i_1,\ldots,i_q-1}}
\Big( \big\langle M_{i_q} w_{i_q}  \big|  u_{i_{q-1}}^{k+1} - u_{i_{q}}^{k+1} \big\rangle
- \big\langle M_{i_q}  w_{i_q}^{k+1} \big|  u_{i_{q-1}} - u_{i_{q}} \big\rangle \Big) 
\nonumber \\
\le & E_u  + E_\cQ , 
\end{align}
where
\begin{align*}
E_u  & =   \Big\langle \sum_{i_1 \in \cI_1} M_{i_1} w_{i_1}^{k+1}   \big| u_0 -  u_0^{k+1} \Big\rangle 
+\sum_{q\ge 1}\bigg(  \Big\langle \sum_{i_{q+1} 
\in \cI_{q+1,i_1,\ldots,i_{q}}  } M_{i_{q+1}} w_{i_{q+1}}^{k+1}   \Big| u_{i_q} -  u_{i_q}^{k+1} \Big\rangle \\
& -  \Big\langle M_{i_q} w_{i_{q}}^{k+1}   \Big| u_{i_q}  -  u_{i_q}^{k+1} \Big\rangle \bigg) 
+ \sum_{q\ge 1} \sum_{i_q \in \cI_{q,i_1,\ldots,i_{q-1}} }
\Big( \langle w_{i_q} |  u_{i_{q-1}}^{k+1} - u_{i_{q}}^{k+1} \rangle
- \langle w_{i_q}^{k+1} |  u_{i_{q-1}} - u_{i_{q}} \rangle \Big), 
\\
E_\cQ & = \big\langle \cQ_{u_0}  (x^{k+1}- x^{k})   \big| u_0 -  u_0^{k+1} \big\rangle + \sum_{q\ge 1} \big\langle \cQ_{u_{i_q}}  (x^{k+1}- x^{k})   \big| u_{i_q} -  u_{i_q}^{k+1} \big\rangle \\
&+ \sum_{j=1}^m  \big\langle \cQ_{s_{j}}  (x^{k+1}- x^{k})  \big| s_{j} -  s_{j}^{k+1} \big\rangle  . 
\end{align*}

$E_u$ can be simplified by basic algebraic manipulations as below:
\begin{align*}
E_u & = \sum_{q\ge 1}  \Big\langle  \sum_{i_q \in \cI_{q,i_1,\ldots,i_{q-1}} } M_{i_q} w_{i_q}^{k+1} \Big|  u_{i_{q-1}} - u_{i_{q-1}}^{k+1} \Big\rangle
- \sum_{q\ge 1} \Big\langle M_{i_q}  w_{i_q}^{k+1} \Big|  u_{i_{q}}  - u_{i_{q}}^{k+1} \Big\rangle \\
& +  \sum_{q\ge 1} \sum_{i_q \in \cI_{q,i_1,\ldots,i_{q-1}} }
\Big( \big\langle M_{i_q} w_{i_q}  \big|  u_{i_{q-1}}^{k+1} - u_{i_{q}}^{k+1} \big\rangle
- \big\langle M_{i_q} w_{i_q}^{k+1} \big|  u_{i_{q-1}}  - u_{i_{q}}  \big\rangle \Big) \\
& = \sum_{q\ge 1} \sum_{i_q \in \cI_{q,i_1,\ldots,i_{q-1}} }  \bigg( \big\langle   M_{i_q} w_{i_q}^{k+1} \big|  u_{i_{q-1}} - u_{i_{q-1}}^{k+1} \big\rangle
 +  \big\langle M_{i_q} w_{i_q} \big|  u_{i_{q-1}}^{k+1} - u_{i_{q}}^{k+1} \big\rangle \\
& - \big\langle M_{i_q} w_{i_q}^{k+1} \big|  u_{i_{q-1}} - u_{i_{q}}  \big\rangle \bigg) 
 - \sum_{q\ge 1} \Big\langle M_{i_q}  w_{i_q}^{k+1} \Big|  u_{i_{q}}  - u_{i_{q}}^{k+1} \Big\rangle \\
& = \sum_{q\ge 1} \sum_{i_q \in \cI_{q,i_1,\ldots,i_{q-1}} }   \big\langle   M_{i_q} (w_{i_q}  - w_{i_q}^{k+1} ) \big|   u_{i_{q-1}}^{k+1} \big\rangle
 + \sum_{q\ge 1} \Big(  \big\langle M_{i_q} w_{i_q}^{k+1} \big|  u_{i_{q}}  \big\rangle
- \big\langle M_{i_q} w_{i_q} \big|  u_{i_{q}}^{k+1} \big\rangle \Big) \\
& - \sum_{q\ge 1} \Big\langle M_{i_q}  w_{i_q}^{k+1} \Big|  u_{i_{q}} - u_{i_{q}}^{k+1} \Big\rangle \\
& = \sum_{q\ge 1} \sum_{i_q \in \cI_{q,i_1,\ldots,i_{q-1}} }   \big\langle   M_{i_q} (w_{i_q}  - w_{i_q}^{k+1} ) \big|   u_{i_{q-1}}^{k+1} - u_{i_{q}}^{k+1} \big\rangle \\
& = \sum_{q\ge 1} \sum_{i_q \in \cI_{q,i_1,\ldots,i_{q-1}} }   \big\langle   w_{i_q}  - w_{i_q}^{k+1}  \big| 
M_{i_q} (  u_{i_{q}}^{k} - u_{i_{q}}^{k+1} 
- w_{i_{q}}^{k} + w_{i_{q}}^{k+1} ) \big\rangle 
\quad \textrm{by the $w_{i_q}$-step of \eqref{proto}} \\
& = \sum_{q\ge 1} \sum_{i_q \in \cI_{q,i_1,\ldots,i_{q-1}} }   \big\langle   w_{i_q} - w_{i_q}^{k+1}  \big| 
\cQ_{w_{i_q}} (x^{k+1} - x^{k} ) \big\rangle
\quad \textrm{by the structure of $\cQ$ in Lemma \ref{l_ppm_w}}  \\
& = \sum_{i=1}^{n-1}  \big\langle \cQ_{w_{i}} (x^{k+1} - x^{k} ) \big|   w_{i}  - w_{i}^{k+1}  \big\rangle,
\end{align*}
Thus, we have $E_u+E_\cQ =  \big\langle x^{k+1} - x^{k} \big| x   - x^{k+1}  \big\rangle_\cQ $ by the structure of $\cQ$. Finally, (i) follows from \eqref{ws}.

(ii) From (i) follows by Proposition \ref{p_ine} that 
\[
2\Pi (x^{k+1}, x )  \le    \sum_{i=1}^{n-1}  \Big( \big\| 
  z_{i}^{k} - z_{i}  \big\|_{M_i  }^2 
  -\big\|   z_{i}^{k+1} - z_{i}  \big\|_{M_i  }^2 
-   \big\| z_{i}^{k+1} - z_{i}^k \big\|_{M_i }^2  \Big) ,
\]
where $z_i^k = u_i^k - w_i^k$ and $z_i  = u_i  - w_i $.  Summing up from $k=0$ to $k=K-1$ yields
\[
\sum_{k=0}^{K-1} \Pi (x^{k+1},x ) \le 
\frac{1}{2} \sum_{i=1}^{n-1}  \big\| 
  z_{i}^{0} - z_{i}  \big\|_{M_i  }^2 .
\]
Then (ii) follows from the convexity of $\Pi$ with respect to $x^k$.
\end{proof}

\end{document}